\documentclass[preprint,12pt]{elsarticle}
\usepackage[T1]{fontenc}
\usepackage[utf8]{inputenc}
\usepackage{mathtools,amssymb,amsthm}
\usepackage{graphicx,float,microtype}
\usepackage{hyperref}
\journal{}
\numberwithin{equation}{section}
\theoremstyle{plain}
\newtheorem{theorem}{Theorem}[section]
\newtheorem{lemma}[theorem]{Lemma}
\newtheorem{proposition}[theorem]{Proposition}
\newtheorem{corollary}[theorem]{Corollary}
\theoremstyle{definition}

\newcommand{\R}{\mathbb R}
\newcommand{\C}{\mathbb C}

\newcommand{\eps}{\varepsilon}
\newcommand{\dd}{\,\mathrm d}
\newcommand{\ii}{\mathrm i}

\DeclareMathOperator{\diag}{diag}
\DeclareMathOperator{\tr}{tr}

\hypersetup{pdftitle={Fully Deterministic Shallow Neural Networks with Localized Activations. Part II: Condition Number Estimates},pdfauthor={Ran Bi and Weibing Deng}}
\begin{document}
\begin{frontmatter}
\title{Fully Deterministic Shallow Neural Networks with Localized Activations. Part II: Condition Number Estimates}
\author[nju]{Ran Bi}
\ead{ranbi@smail.nju.edu.cn}
\author[nju]{Weibing Deng}
\ead{wbdeng@nju.edu.cn}
\affiliation[nju]{organization={School of Mathematics, Nanjing University},
                 city={Nanjing},
                 postcode={210093},
                 country={People's Republic of China}}
\begin{abstract}
	We prove positive definiteness and quantitative condition-number bounds
	for continuous Sobolev Gram matrices of deterministic shallow networks
	with Gaussian and tanh-difference activations. The prescribed feature
	families include the optimal Sobolev approximation constructions of Part I.
	On every fixed nonempty bounded Lipschitz domain in $\R^d$, $d\ge2$,
	and for every fixed integer $m\ge0$, the raw and column-normalized
	Gram matrices in the full $H^m$ inner product have spectral condition
	numbers bounded by $\exp(CN^{2/d}/\log(N+2))$ for sufficiently large $N$,
	where $N$ is the feature count and $C>0$ is independent of $N$.
	The upper bounds hold for midpoint and Gauss-Legendre offsets.
	For midpoint offsets and a suitable angular sequence, matching lower
	bounds on fixed ellipsoids establish sharpness of this exponent over
	the stated domain class for raw Gaussian matrices and both raw and
	normalized tanh-difference matrices.
	The proof combines homogeneous differential operators that isolate
	direction groups with one-dimensional coefficient-recovery estimates.
	Two-dimensional $L^2$ experiments on a square illustrate condition-number
	growth and normalization effects at finite resolutions.
\end{abstract}
\begin{keyword}
Gram matrix \sep condition number \sep deterministic neural network \sep localized ridge function \sep coefficient recovery \sep Sobolev approximation
\MSC[2020] Primary 65F35 \sep Secondary 41A46, 65D15, 68T07
\end{keyword}
\end{frontmatter}
\tableofcontents
\section{Introduction}
In this paper, we study the condition numbers of continuous Sobolev Gram matrices for
deterministic families of localized ridge functions on a fixed nonempty
bounded Lipschitz domain $\Omega\subset\R^d$, with $d\ge2$.
As in Part I, the base offset radius is the domain radius
$R:=\sup_{x\in\Omega}|x|$, where $|x|$ denotes the Euclidean norm.
The features use the Gaussian activation $\sigma_G(t)=e^{-t^2}$ or the
fixed tanh difference $\sigma_D(t)=\tanh(t+1)-\tanh(t-1)$.
Their directions come from recursive spherical cubature, and their offsets
are equispaced midpoints or Gauss-Legendre nodes. We prove linear
independence and quantify the sensitivity of both raw and
column-normalized coefficient representations as the activation scales
decrease.

For fixed hidden parameters, constructing an approximation in the
resulting linear trial space amounts to determining the output
coefficients. Classical
constructive approximation results prescribe such parameters
\cite{Mhaskar1996,Petrushev1998}, and recent results establish optimal
Sobolev rates for prescribed $\mathrm{ReLU}^k$ and smooth sigmoidal features
under their stated activation and smoothness conditions~%
\cite{LiuMaoXu2025Relu,FuWang2026Optimal}.
The same linear structure underlies extreme learning machines
\cite{HuangZhuSiew2006} and feature methods for PDEs
\cite{DongLi2021,ChenChiEYang2022}. In these applications, algebraic
systems determine the output coefficients.
Mao~et~al.\ \cite{MaoXuXu2026Linearized} report severe ill-conditioning
of variational systems for fixed $\mathrm{ReLU}^k$ and ordinary tanh
features.

Part I \cite{Bi2026Approximation} constructs feature families of this form
whose spans attain the optimal Sobolev approximation order. For each fixed
$s>0$, its geometric normalization and parameter conditions yield
explicit hidden parameters that are independent of the target, together
with a bounded linear approximation operator. For all sufficiently large
frequency resolutions $M$, the family contains $N\asymp M^d$ features,
and a single approximant
$v_M[u]$ satisfies
\[
 \|u-v_M[u]\|_{H^m(\Omega)}
 \le C N^{-(s-m)/d}\|u\|_{H^s(\Omega)},
 \qquad m\in\mathbb N_0,\quad m<s,
\]
for every $u\in H^s(\Omega)$, simultaneously for all the indicated
integers $m$. This order is sharp on the Sobolev unit ball among linear
spaces chosen independently of the target. The Fourier construction also
bounds the sum of the absolute values of the output coefficients by a
polynomial in $M$ times $\|u\|_{H^s(\Omega)}$. These coefficients include
the cubature weights.

A coefficient bound for a particular approximation operator does not
control the conditioning of the full feature family. For a fixed integer $m\ge0$
and either activation $\sigma\in\{\sigma_G,\sigma_D\}$, let
$H_{m,\sigma,M}$ denote the continuous Gram matrix of the unweighted
features $g_1,\ldots,g_N$ in the full $H^m(\Omega)$ inner product. Its
smallest eigenvalue satisfies
\[
 \lambda_{\min}(H_{m,\sigma,M})
 =\min_{\|c\|_2=1}
   \left\|\sum_{i=1}^N c_ig_i\right\|_{H^m(\Omega)}^2.
\]
This minimum can be small because individual columns have small norms or
because linear combinations nearly cancel.
For a linearly independent feature family, the condition number
of the synthesis map from Euclidean coefficients to $H^m(\Omega)$ is the
square root of the Gram condition number. The spectral estimates below
therefore quantify coefficient recovery for arbitrary linear combinations
of the prescribed features.

Continuous neural Gram matrices have been studied for one-dimensional
ReLU trial spaces \cite{HongSiegelTanXu2022SpectralBias,CaiDoktorovaFalgoutHerrera2025Ritz}
and in connection with high-frequency approximation
\cite{ZhangZhaoZhongZhou2025HighFrequencies}.
On spheres, Liu~et~al.\ \cite{LiuMaoXu2025Conditioning} obtain sharp
geometric spectral and condition-number estimates for
$\mathrm{ReLU}^k$ features. For Gaussian translates, Erd\'elyi established
Bernstein-type derivative
bounds and Remez inequalities controlling function norms
\cite{Erdelyi2006Gaussian,Erdelyi2009GaussianRemez}.
The analytic localized activations considered here have scales that
decrease with resolution. Their analysis requires coefficient-recovery
estimates at these scales and a means of separating the multivariate
direction groups.

For sampled feature matrices, Chen~et~al.\
\cite{ChenSchaefferWard2022} prove singular-value concentration for
random Fourier features under dimension, separation, and sampling
conditions. Chen and Schaeffer \cite{ChenSchaeffer2024} establish
condition-number estimates for these matrices. Quantitative lower
spectral estimates also hold for nonlinear
random features under specified initialization and data assumptions
\cite{NguyenMondelliMontufar2021,WangZhu2024}.
Under suitable nondegeneracy and sampling assumptions, the evaluation
matrices of analytic random-feature models have maximal rank almost surely
\cite{Holzmueller2021DoubleDescent}. Maximal rank alone gives no
quantitative bound for the smallest nonzero singular value.
For RFM collocation, Ming and Yu
\cite{MingYu2025OneDimension,MingYu2026Spectral} derive rapid singular-value
decay and condition-number lower bounds for Fourier and ordinary tanh
residual matrices with bounded frequency windows. These sampled-matrix
estimates depend on the sampling geometry or residual operator.

To compute output coefficients, ELM methods use Lanczos and SVD
solvers, together with Tikhonov regularization
\cite{TangHan2009Lanczos,MicheEtAl2011TROP}, while high-precision RFM studies
examine solver sensitivity \cite{ChenESun2024}.
Recent RFM methods use preconditioning and feature filtering
\cite{TanChen2026,VanBeekDoleanMoseley2026}, or operator-aware sampling with
residual whitening \cite{LinghuWang2026ResidualChristoffel}.
These methods act on a specified discrete system or modify its
representation. Regularized frame approximation can remain accurate with
an ill-conditioned coefficient representation \cite{AdcockHuybrechs2019}.
Continuous Gram estimates measure coefficient sensitivity in a prescribed
feature family. The stability of a PDE discretization also depends on
the operator, sampling, quadrature, regularization, and solution algorithm.

Our coefficient-recovery argument starts from the feature definitions and
applies to arbitrary coefficient vectors. It does not require the ridge
representation or approximation theorem of Part I. The upper bounds hold
for every fixed integer $m\ge0$ and arbitrary fixed positive offset-margin
constants. This parameter range includes the feature families used for
optimal approximation in Part I. We give the full proofs for the rounded
midpoint grid. A polynomial lower bound on Gauss-Legendre offset spacing
then allows the same recovery arguments to establish the corresponding
upper bound.

The Gaussian scale is $\eps_{G,M}=\sqrt{\log(M+2)}/M$, and the
tanh-difference scale is $\eps_{D,M}=\log(M+2)/M$. Both feature families have
$N\asymp M^d$ features. Here $\kappa_2$ denotes the condition number in the
matrix norm induced by the Euclidean vector norm. We obtain the following
results.
\begin{enumerate}
\item For each fixed integer $m\ge0$, the raw and column-normalized Gram
matrices formed in the full $H^m$ inner product are positive definite on
every fixed nonempty bounded Lipschitz domain for all sufficiently large
$M$, and their spectral condition numbers satisfy
\[
 \kappa_2\le\exp(C_m M^2/\log(M+2)).
\]
The underlying recovery estimate gives a quantitative synthesis lower
bound for both midpoint and Gauss-Legendre offsets. Since $N\asymp M^d$,
the exponent has order $N^{2/d}/\log(N+2)$.
\item The recursive angular grid admits homogeneous selectors of degree
$O_d(M)$ and coefficient norm $\exp(O_d(M\log M))$. These selectors
separate $O(M^{d-1})$ ridge directions without requiring a product over
all directions. Combined with one-dimensional recovery estimates for
each activation, they control the full synthesis map by observation on
an interior ball.
\item For midpoint feature families, fixed ellipsoids and an admissible
angular degree sequence yield matching lower bounds for raw Gaussian
matrices and for raw and normalized
tanh-difference matrices. Gaussian tails produce a large disparity
between column norms. Cancellation of successive tanh tail moments makes
the same exponential order persist after column normalization.
For normalized Gaussian midpoint matrices, we also prove a polynomial
lower bound on every fixed domain and an $\exp(c_mM)$ lower bound on
the same short-axis ellipsoid. These bounds do not match the general
upper estimate.
\end{enumerate}
The matching lower bounds establish the optimal exponential order over the
stated class of fixed domains and admissible angular degree sequences.
The constants may depend on the domain, whose radius determines the base
offset range. Optimality over this class does not imply a matching lower
bound on every fixed domain. The square-domain $L^2$ experiments
illustrate growth and the effect of normalization at finite resolutions.

Section~\ref{gub:sec:actual-dictionary} defines the feature families and states the
main theorem. Section~\ref{gub:sec:recursive-directions} constructs the
direction selectors. The next two sections prove Gaussian and
tanh-difference coefficient recovery. The resulting estimates give the
raw and normalized spectral bounds. We prove the lower bounds in
Section~\ref{gub:sh:section}, and report the square-domain experiments in
Section~\ref{sec:numerics}.

\section{Ridge families and the main theorem}
\label{gub:sec:actual-dictionary}

\subsection{Midpoint ridge families}
We define the ridge feature families and their Gram matrices. The hidden
parameters are prescribed by the deterministic construction of Part I
\cite{Bi2026Approximation}.
Let $d\ge2$ be an integer and let $\Omega\subset\R^d$ be a
nonempty bounded Lipschitz domain. As in Part I, define
\[
 R:=\sup_{x\in\Omega}|x|>0,
\]
where $|x|$ is the Euclidean norm. Equivalently,
$R=\max_{x\in\overline\Omega}|x|$.
Write $B_r(y)$ for the open Euclidean ball of radius $r$ centered at
$y$. Then $\Omega\subset B_R(0)$, and the radius $R$ determines the
base offset range.

We use the two fixed activations
\begin{equation}\label{gub:eq:activations}
 \sigma_G(t)=e^{-t^2},\qquad
 \sigma_D(t)=\tanh(t+1)-\tanh(t-1).
\end{equation}
Both are even and strictly positive on the real axis. For either
$\sigma\in\{\sigma_G,\sigma_D\}$, a ridge feature has the form
\[
 \sigma\!\left(\frac{\omega\cdot x-b}{\eps}\right),
 \qquad x\in\Omega,\quad\omega\in\mathbb S^{d-1},\quad b\ge0,
 \quad\eps>0.
\]
Here $\omega$ is the unit normal to the hyperplane $\omega\cdot x=b$,
$b$ is its geometric offset, and $\eps$ controls the localization width
in the normal direction. The hidden weight vector and additive bias in
this representation are $\omega/\eps$ and $-b/\eps$, respectively.
A common scale and finite sets of directions and offsets determine the
feature family.

Let $M>0$ denote the real frequency resolution parameter. For each $M$,
all features associated with a given activation have the same scale
$\eps_{\sigma,M}$. As in Part I, we take
\begin{equation}\label{gub:eq:scales}
 T_M=\log(M+2),\qquad
 \eps_{G,M}=\frac{\sqrt{T_M}}{M},\qquad
 \eps_{D,M}=\frac{T_M}{M}.
\end{equation}
Since $|\omega\cdot x|<R$ for $x\in\Omega$, we choose positive offsets
in an interval $(0,B_{\sigma,M})$ extending beyond $R$ by the offset
margin $\Delta_{\sigma,M}$. In Part I, this margin controls the error
caused by truncating the offset integral. Fix $a_G,a_D>0$ and set
\begin{equation}\label{gub:eq:margins}
 \Delta_{G,M}=a_G\frac{T_M}{M},\qquad
 \Delta_{D,M}=a_D\frac{T_M^2}{M},\qquad
 B_{\sigma,M}=R+\Delta_{\sigma,M}.
\end{equation}
The subscript $\sigma$ distinguishes the parameters for $\sigma_G$ and
$\sigma_D$. For a fixed activation, write $\eps=\eps_{\sigma,M}$ and
$B_M=B_{\sigma,M}$. For all sufficiently large $M$,
\begin{equation}\label{gub:eq:bounded-envelope}
 R<B_M\le R+1,\qquad 0<\eps\le1.
\end{equation}

For the direction set, fix an integer-valued angular degree
$q_M\asymp M$, with comparison constants independent of $M$. Define
the number of latitude nodes by
\begin{equation}\label{gub:eq:degree-and-step-constant}
 k_M^{\mathrm{lat}}=\left\lceil\frac{q_M+1}{2}\right\rceil.
\end{equation}
Begin with the $q_M+1$ equally spaced points on the unit circle, at angles
$2\pi a/(q_M+1)$ for $0\le a<q_M+1$. For each $p=3,\ldots,d$, lift
each previously constructed point $\eta\in\mathbb S^{p-2}$ to
$(\sqrt{1-t^2}\eta,t)$, where $t$ runs over the zeros of the
degree-$k_M^{\mathrm{lat}}$ Jacobi polynomial with both parameters equal
to $(p-3)/2$. Add the antipodal points and retain each distinct point
once. Section~\ref{gub:sec:recursive-directions} gives the recursive formulas.

Denote the resulting direction set by $X_M$, and let $n_{\theta,M}$ be
the number of circle points after antipodal completion:
\[
 n_{\theta,M}=\begin{cases}
 q_M+1,&q_M+1\text{ even},\\
 2(q_M+1),&q_M+1\text{ odd},
 \end{cases}.
\]
The cardinality of $X_M$ is
\begin{equation}\label{gub:eq:full-direction-count}
 J_M^{\mathrm{eq}}=n_{\theta,M}(k_M^{\mathrm{lat}})^{d-2}
 \asymp M^{d-1}.
\end{equation}
Enumerate $X_M=\{\omega_\nu:1\le\nu\le J_M^{\mathrm{eq}}\}$ in
antipodal pairs, so that
$\omega_{\nu+J_M^{\mathrm{eq}}/2}=-\omega_\nu$ for
$1\le\nu\le J_M^{\mathrm{eq}}/2$.

Fix $\kappa_0>0$ to prescribe the nominal offset spacing $\kappa_0/M$.
For sufficiently large $M$, define the positive offsets by the midpoints
\begin{equation}\label{gub:eq:half-grid}
 \begin{aligned}
 K_M^{\mathrm{eq}}&=\left\lfloor\frac{B_MM}{\kappa_0}\right\rfloor,
 \qquad h_M=\frac{B_M}{K_M^{\mathrm{eq}}},\\
 b_{\ell,M}&=(\ell+\tfrac12)h_M,
 \qquad0\le\ell<K_M^{\mathrm{eq}}.
 \end{aligned}
\end{equation}
The prescribed features and their total number are
\begin{equation}\label{gub:eq:actual-columns}
 \begin{aligned}
 g_{\nu\ell,M}(x)
 &=\sigma\!\left(\frac{\omega_\nu\cdot x-b_{\ell,M}}{\eps_{\sigma,M}}\right),\\
 N_M^{\mathrm{eq}}&=J_M^{\mathrm{eq}}K_M^{\mathrm{eq}}.
 \end{aligned}
\end{equation}
Here $1\le\nu\le J_M^{\mathrm{eq}}$ and $0\le\ell<K_M^{\mathrm{eq}}$.
Each tanh-difference feature contributes one Gram column and is realized
by two ordinary $\tanh$ neurons.
The spherical cubature and offset quadrature weights of Part I are
included in its output coefficients. 
For each activation with the midpoint rule, let
\[
 V_M=\operatorname{span}\{g_{\nu\ell,M}:\omega_\nu\in X_M,
                         \ 0\le\ell<K_M^{\mathrm{eq}}\}.
\]
The span is taken over $\C$. For each $M$, the hidden parameters are
fixed, and $V_M$ is a finite-dimensional linear trial space with
$\dim V_M\le N_M^{\mathrm{eq}}$. Theorem~\ref{gub:thm:main} proves that
the prescribed features are linearly independent and hence form a basis
of $V_M$.

For comparison with the approximation theorem of Part I, assume additionally that
$R\ge1$, as in its geometric normalization, and fix an approximation
regularity order $s>0$.
If $q_M$ and $\kappa_0$ satisfy the sufficient parameter conditions
of Part I and the margin constants also satisfy
\begin{equation}\label{gub:eq:margin-constants}
 a_G^2\ge\max\{s-d/2,1\},\qquad
 2a_D>\max\{s-d/2,\pi+1/2\},
\end{equation}
then, for every $u\in H^s(\Omega)$, every integer $0\le m<s$, and all
sufficiently large $M$,
\[
 \inf_{v\in V_M}\|u-v\|_{H^m(\Omega)}
 \le C(N_M^{\mathrm{eq}})^{-(s-m)/d}
 \|u\|_{H^s(\Omega)},
\]
with $C$ independent of $u$ and $M$. These additional approximation
conditions are not required for the conditioning theorem below.

In dimension two, index the circle directions by
$a=0,\ldots,n_{\theta,M}-1$ and set
$\theta_a=2\pi a/n_{\theta,M}$. The features then have the explicit form
\begin{equation}\label{gub:eq:planar-columns}
 g_{a\ell,M}(x_1,x_2)
 =\sigma\!\left(\frac{x_1\cos\theta_a+x_2\sin\theta_a
             -(\ell+\tfrac12)h_M}{\eps_{\sigma,M}}\right),
\end{equation}
where $0\le\ell<K_M^{\mathrm{eq}}$.
Thus $N_M^{\mathrm{eq}}=n_{\theta,M}K_M^{\mathrm{eq}}$ in the planar
case.

\begin{lemma}\label{gub:lem:rounded-grid}
For all sufficiently large $M$,
\begin{equation}\label{gub:eq:step-bounds}
 \begin{gathered}
 \frac{\kappa_0}{M}\le h_M\le\frac{2\kappa_0}{M},\\
 0\le h_M-\frac{\kappa_0}{M}
 <\frac{\kappa_0}{M K_M^{\mathrm{eq}}}=O(M^{-2}),
 \end{gathered}
\end{equation}
and
\begin{equation}\label{gub:eq:count}
 \frac{R}{2\kappa_0}M\le K_M^{\mathrm{eq}}
 \le\frac{R+1}{\kappa_0}M,\qquad N_M^{\mathrm{eq}}\asymp M^d.
\end{equation}
\end{lemma}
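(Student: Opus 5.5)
The plan is to use only the floor identity behind $K_M^{\mathrm{eq}}$ together with the envelope bound \eqref{gub:eq:bounded-envelope}. Set $x_M:=B_MM/\kappa_0$, so that $K_M^{\mathrm{eq}}=\lfloor x_M\rfloor$. Since $B_M>R>0$, we have $x_M\to\infty$. Hence for all sufficiently large $M$ the integer $K_M^{\mathrm{eq}}$ is at least $1$, the grid \eqref{gub:eq:half-grid} is well defined, and
\[
 \tfrac12x_M\le x_M-1<K_M^{\mathrm{eq}}\le x_M .
\]
The lower inequality $x_M-1\ge x_M/2$ only requires $x_M\ge2$, which holds once $M\ge 2\kappa_0/R$.

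\textbf{Step size.} Write $h_M=B_M/K_M^{\mathrm{eq}}=(\kappa_0/M)\,x_M/\lfloor x_M\rfloor$.
From $\lfloor x_M\rfloor\le x_M$ we get $h_M\ge\kappa_0/M$.
From $\lfloor x_M\rfloor\ge x_M/2$ we get $h_M\le2\kappa_0/M$.
For the finer bound,
\[
 h_M-\frac{\kappa_0}{M}
 =\frac{\kappa_0}{M}\cdot\frac{x_M-\lfloor x_M\rfloor}{\lfloor x_M\rfloor}
 <\frac{\kappa_0}{MK_M^{\mathrm{eq}}},
\]
because the fractional part is strictly less than $1$. Combining this with the count lower bound below gives $K_M^{\mathrm{eq}}\ge RM/(2\kappa_0)$, hence $\kappa_0/(MK_M^{\mathrm{eq}})\le 2\kappa_0^2/(RM^2)=O(M^{-2})$.

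\textbf{Counts.} The upper bound follows from $K_M^{\mathrm{eq}}\le x_M$ and $B_M\le R+1$, which give $K_M^{\mathrm{eq}}\le (R+1)M/\kappa_0$. The lower bound follows from $K_M^{\mathrm{eq}}\ge x_M/2$ and $B_M>R$, which give $K_M^{\mathrm{eq}}\ge RM/(2\kappa_0)$. For the feature count, $N_M^{\mathrm{eq}}=J_M^{\mathrm{eq}}K_M^{\mathrm{eq}}$, and \eqref{gub:eq:full-direction-count} gives $J_M^{\mathrm{eq}}\asymp M^{d-1}$, so $N_M^{\mathrm{eq}}\asymp M^d$.

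If one wants to check \eqref{gub:eq:full-direction-count} rather than cite it, the factors are as follows.
The number $n_{\theta,M}$ lies between $q_M+1$ and $2(q_M+1)$, so it is comparable to $M$ because $q_M\asymp M$.
Likewise $k_M^{\mathrm{lat}}\asymp q_M\asymp M$.

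There is no genuine obstacle in this argument. The only points needing care are to use the strict fractional-part inequality to obtain the strict bound, and to state the threshold on $M$ explicitly. That threshold must be large enough that $x_M\ge2$ and that \eqref{gub:eq:bounded-envelope} holds, where the latter uses $\Delta_{\sigma,M}\to0$.
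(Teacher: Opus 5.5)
Your proposal is correct and follows essentially the same route as the paper. The paper writes $B_MM/\kappa_0=K_M^{\mathrm{eq}}+\vartheta_M$ with $0\le\vartheta_M<1$, which is your floor decomposition. It then obtains the spacing bounds from the identity $h_M=(\kappa_0/M)(1+\vartheta_M/K_M^{\mathrm{eq}})$, and the count bounds from $B_MM/(2\kappa_0)\le K_M^{\mathrm{eq}}\le B_MM/\kappa_0$ together with $R<B_M\le R+1$. Your explicit threshold $M\ge2\kappa_0/R$ simply makes the paper's ``sufficiently large'' precise.
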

\begin{proof}
Write $B_MM/\kappa_0=K_M^{\mathrm{eq}}+\vartheta_M$, where
$0\le\vartheta_M<1$. For sufficiently large $M$,
\[
 \frac{B_MM}{2\kappa_0}\le K_M^{\mathrm{eq}}\le\frac{B_MM}{\kappa_0}.
\]
Combining this inequality with $R<B_M\le R+1$ gives the offset-count bounds in
\eqref{gub:eq:count}. The exact identity
\begin{equation}\label{gub:eq:rounding-identity}
 h_M=\frac{\kappa_0}{M}
       \left(1+\frac{\vartheta_M}{K_M^{\mathrm{eq}}}\right)
\end{equation}
yields the spacing bounds and their $O(M^{-2})$ difference.
Combining the offset count with \eqref{gub:eq:full-direction-count} gives
$N_M^{\mathrm{eq}}\asymp M^d$.
\end{proof}

For coefficient recovery, take the first $J_M^{\mathrm{eq}}/2$ directions
as representatives of the antipodal pairs. Since the activations are even,
$\sigma((-\omega\cdot x-b)/\eps)=\sigma((\omega\cdot x+b)/\eps)$.
Thus a feature with direction $-\omega$ and positive offset $b$ is
identical to the feature with direction $\omega$ and offset $-b$.
We may therefore reindex the features in \eqref{gub:eq:actual-columns}
by antipodal representatives and signed offsets:
\begin{equation}\label{gub:eq:local-notation}
 J=J_M^{\mathrm{eq}}/2,\quad K=2K_M^{\mathrm{eq}},\quad
 N=N_M^{\mathrm{eq}}=JK.
\end{equation}
\begin{equation}\label{gub:eq:signed-grid}
 \begin{aligned}
 b_j&=\left(j-\frac{K+1}{2}\right)h_M,\qquad1\le j\le K,\\
 g_{\nu j}&=\begin{cases}
 g_{\nu+J,K/2-j,M},&1\le j\le K/2,\\
 g_{\nu,j-K/2-1,M},&K/2<j\le K.
 \end{cases}
 \end{aligned}
\end{equation}
Here $1\le\nu\le J$, and the index $M$ is suppressed in $g_{\nu j}$.
Each $g_{\nu j}$ has representative direction $\omega_\nu$ and
signed offset $b_j$.
The signed grid consists of the positive midpoint offsets and their
negatives. Its central pair is $-h_M/2,h_M/2$, the spacing across zero
is exactly $h_M$, and there is no zero node.
For a coefficient vector $c=(c_{\nu j})\in\C^N$, indexed by
$1\le\nu\le J$ and $1\le j\le K$, define
\begin{equation}\label{gub:eq:dictionary}
 V_c(x)=\sum_{\nu=1}^J\sum_{j=1}^Kc_{\nu j}g_{\nu j}(x).
\end{equation}
For coefficient recovery on $\Omega$, choose a fixed point $x_0\in\Omega$
and radius $\rho>0$ so that
\begin{equation}\label{gub:eq:domain-sandwich}
 \overline{B_\rho(x_0)}\subset\Omega\subset B_R(0).
\end{equation}
We use the fixed interior ball $B_\rho(x_0)$ as an observation region.
Centering the coordinates at $x_0$ gives the offsets
$\beta_{\nu j}=b_j-\omega_\nu\cdot x_0$. With
$C_K=2(R+1)/\kappa_0$ and $B_0=R+1+|x_0|$, they satisfy, for all
sufficiently large $M$ and uniformly in $\nu$,
\begin{equation}\label{gub:eq:shifted-bias-input}
 \begin{aligned}
 K&\le C_KM,\qquad |\beta_{\nu j}|\le B_0,\\
 |\beta_{\nu i}-\beta_{\nu j}|&\ge\kappa_0/M
 \qquad(i\ne j).
 \end{aligned}
\end{equation}
These bounds follow from Lemma~\ref{gub:lem:rounded-grid} and
$|b_j-\omega_\nu\cdot x_0|\le B_M+|x_0|$. Centering preserves the
difference between two offsets in the same direction group.

Unless stated otherwise, positive constants are independent of $M$,
the node indices, and the coefficient vector. They may depend on the
fixed domain, dimension, Sobolev order, construction parameters, and
comparison constants in $q_M\asymp M$. Dimension subscripts in $C_d$
and $O_d$ also suppress these fixed comparison constants.

\subsection{Gauss-Legendre offsets}
\label{gub:sec:gl-offsets}
For the Gauss-Legendre offset rule of Part I, retain the same activation,
direction set $X_M$, scale $\eps$, and interval $(0,B_M)$.
At resolution $M$, let
$K_+^{\mathrm{GL}}=\lceil(q_M+1)/2\rceil$ be the number of positive
offsets. Let $-1<t_1<\cdots<t_{K_+^{\mathrm{GL}}}<1$ be the zeros of
the Legendre polynomial of degree $K_+^{\mathrm{GL}}$. The affine map to
$(0,B_M)$ gives
\begin{equation}\label{gub:eq:gl-biases}
 b_j^{\mathrm{GL}}=\frac{B_M}{2}(1+t_j),
 \qquad 1\le j\le K_+^{\mathrm{GL}}.
\end{equation}
The Gauss-Legendre features are obtained from
\eqref{gub:eq:actual-columns} by replacing $b_{\ell,M}$ with
$b_j^{\mathrm{GL}}$ and using the offset index
$1\le j\le K_+^{\mathrm{GL}}$.
As with the midpoint rule, the quadrature weights are included in the
approximation coefficients. The Gram columns are the unweighted features,
whose total number is
$N_M^{\mathrm{GL}}=J_M^{\mathrm{eq}}K_+^{\mathrm{GL}}\asymp M^d$.
By evenness, the same family can be indexed by the $J$ antipodal
representatives and the signed offsets $\{\pm b_j^{\mathrm{GL}}\}$,
with $K^{\mathrm{GL}}=2K_+^{\mathrm{GL}}$ offsets per representative
direction.

Since $B_M\to R>0$ and $K_+^{\mathrm{GL}}\asymp M$, the classical
endpoint and spacing bounds for Legendre zeros
\cite[Theorem~6.21.2]{Szego1975}, followed by the affine map in
\eqref{gub:eq:gl-biases}, give, for all sufficiently large $M$,
\begin{equation}\label{gub:eq:gl-separation}
 \min_j b_j^{\mathrm{GL}}\ge C^{-1}M^{-2},\qquad
 \min_{i\ne j}|b_i^{\mathrm{GL}}-b_j^{\mathrm{GL}}|\ge C^{-1}M^{-2}.
\end{equation}
The central signed pair is $-b_1^{\mathrm{GL}},b_1^{\mathrm{GL}}$,
with gap $2b_1^{\mathrm{GL}}$. Thus \eqref{gub:eq:gl-separation}
gives a lower bound $C^{-1}M^{-2}$ for the distance between any two
distinct signed offsets.
Within each direction group, centering at $x_0$ preserves these
differences. Every centered offset satisfies
$|\pm b_j^{\mathrm{GL}}-\omega_\nu\cdot x_0|
\le B_M+|x_0|\le B_0$ for $1\le\nu\le J$.
Thus the Gauss-Legendre rule has $K^{\mathrm{GL}}\le CM$,
centered offsets bounded by $B_0$, and separation at least
$C^{-1}M^{-2}$, with $C$ independent of $M$ and the node indices.

\subsection{Sobolev Gram matrices}
Fix a nonnegative integer $m$. For a multi-index
$\alpha=(\alpha_1,\ldots,\alpha_d)$ of nonnegative integers, write
$|\alpha|=\alpha_1+\cdots+\alpha_d$ and let $\partial^\alpha$ denote
the corresponding weak derivative. For $f,g\in H^m(\Omega)$, define
the full Sobolev inner product and norm by
\begin{equation}\label{gub:eq:sobolev}
 \begin{aligned}
 \langle f,g\rangle_m
 &=\sum_{|\alpha|\le m}\int_\Omega
 \partial^\alpha f\,\overline{\partial^\alpha g}\dd x,\\
 \|f\|_m^2&=\langle f,f\rangle_m.
 \end{aligned}
\end{equation}
The inner product is linear in its first argument. Enumerate the
unweighted midpoint features $g_{\nu j}$ in
\eqref{gub:eq:signed-grid} as $g_1,\ldots,g_N$, and write
$c=(c_i)_{i=1}^N$ for the coefficients in the same ordering.
Each feature is strictly positive on $\Omega$, so $\|g_i\|_m>0$.

\begin{samepage}
Define the continuous Gram matrix and its diagonal normalization by
\begin{equation}\label{gub:eq:gram-definition}
 \begin{aligned}
 (H_{m,\sigma,M})_{ij}&=\langle g_j,g_i\rangle_m,
 \qquad 1\le i,j\le N,\\
 D_m&=\diag\bigl(\|g_1\|_m,\ldots,\|g_N\|_m\bigr),\\
 \widetilde H_{m,\sigma,M}&=D_m^{-1}H_{m,\sigma,M}D_m^{-1}.
 \end{aligned}
\end{equation}
\end{samepage}
Then $c^*H_{m,\sigma,M}c=\|V_c\|_m^2$ for every $c\in\C^N$.
The matrix $\widetilde H_{m,\sigma,M}$ is the Gram matrix of the features
$g_i/\|g_i\|_m$, each of which has unit $H^m(\Omega)$ norm.
For the Gauss-Legendre feature family of Section~\ref{gub:sec:gl-offsets},
apply the same definitions to its $N_M^{\mathrm{GL}}$ features and denote
the resulting matrices by $H_{m,\sigma,M}^{\mathrm{GL}}$ and
$\widetilde H_{m,\sigma,M}^{\mathrm{GL}}$.

For an invertible matrix $A$, we use the spectral condition number
$\kappa_2(A)=\|A\|_2\|A^{-1}\|_2$, where $\|\cdot\|_2$ is the matrix
norm induced by the Euclidean vector norm. For a Hermitian positive
definite matrix, $\kappa_2(A)=\lambda_{\max}(A)/\lambda_{\min}(A)$.

\begin{theorem}[Condition-number upper bounds]\label{gub:thm:main}
Consider the midpoint feature family \eqref{gub:eq:actual-columns} with
a fixed domain and dimension, fixed positive parameters
$\kappa_0,a_G,a_D$, and an angular degree sequence $q_M\asymp M$
with fixed comparison constants. For each fixed integer $m\ge0$, there
are constants $C_m,M_0>0$ such that, for each
$\sigma\in\{\sigma_G,\sigma_D\}$ and every real $M\ge M_0$, the Gram
matrix and its normalized form are both positive definite and satisfy
\begin{equation}\label{gub:eq:main-g}
 \max\bigl\{\kappa_2(H_{m,\sigma,M}),\kappa_2(\widetilde H_{m,\sigma,M})\bigr\}
 \le\exp\!\left(C_m\frac{M^2}{\log(M+2)}\right).
\end{equation}
In terms of the feature count $N_M^{\mathrm{eq}}$ for the chosen activation,
\begin{equation}\label{gub:eq:main-N}
 \max\bigl\{\kappa_2(H_{m,\sigma,M}),\kappa_2(\widetilde H_{m,\sigma,M})\bigr\}
 \le\exp\!\left(C_m\frac{(N_M^{\mathrm{eq}})^{2/d}}
 {\log(N_M^{\mathrm{eq}}+2)}\right).
\end{equation}
The constants may depend on $\Omega$, $d$, $m$, the fixed positive
construction parameters, and the comparison constants in $q_M\asymp M$.
They are independent of $M$, the node indices, and the coefficient vector.
\end{theorem}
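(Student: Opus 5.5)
The plan is to bound $\lambda_{\max}$ from above and $\lambda_{\min}$ from below separately, for both $H=H_{m,\sigma,M}$ and $\widetilde H=D_m^{-1}HD_m^{-1}$.

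\emph{Upper bound on $\lambda_{\max}$.} Each feature satisfies $\|g_i\|_m\le C\eps^{-m}\le CM^m$, since derivatives of order $\le m$ of $\sigma((\omega\cdot x-b)/\eps)$ are bounded by $C\eps^{-|\alpha|}$ on the bounded set $\Omega$. Gershgorin, or Cauchy--Schwarz on $\|V_c\|_m\le\sum|c_i|\|g_i\|_m$, then gives $\lambda_{\max}(H)\le N\max_i\|g_i\|_m^2\le CM^{d+2m}$. For the normalized matrix, $\lambda_{\max}(\widetilde H)\le N$ trivially. These polynomial factors are absorbed into the exponential.

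\emph{Lower bound on $\lambda_{\min}$.} This is the core of the argument. Since $\|V_c\|_m\ge\|V_c\|_{L^2(B_\rho(x_0))}$, it suffices to show
\[
 \|V_c\|_{L^2(B_\rho(x_0))}\ge\exp\bigl(-CM^2/\log(M+2)\bigr)\|c\|_2 .
\]
For $\widetilde H$, note that $c^*\widetilde Hc=\|V_{D_m^{-1}c}\|_m^2$ and $\|D_m^{-1}c\|_2\ge\|c\|_2/\max_i\|g_i\|_m$. The same synthesis bound therefore transfers, up to a polynomial loss. I would prove the synthesis bound in two stages, after centering at $x_0$ and using the signed reindexing \eqref{gub:eq:signed-grid}.

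\emph{Stage one: separating direction groups.} Following the construction of Section~\ref{gub:sec:recursive-directions}, I construct for each representative $\omega_\nu$ a homogeneous constant-coefficient differential operator $P_\nu(\partial)$ of degree $O(M)$ with coefficient norm $\exp(O(M\log M))$. It annihilates every ridge function in directions $\omega_\mu$, $\mu\ne\nu$, because its symbol vanishes at those directions, and it satisfies $P_\nu(\omega_\nu)\ne0$ with a quantitative lower bound. To keep the degree $O(M)$ rather than $O(M^{d-1})$, I exploit the recursive latitude structure: a product of linear forms vanishing on the other circle points within the same latitude, times polynomials in the last coordinate vanishing on the other Jacobi nodes. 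Applying $P_\nu$ gives
\[
 P_\nu(\partial)V_c=P_\nu(\omega_\nu)\,\eps^{-\deg P_\nu}\sum_jc_{\nu j}\sigma^{(\deg P_\nu)}\bigl((\omega_\nu\cdot x-\beta_{\nu j})/\eps\bigr).
\]
An $L^2$ norm of such a derivative on a slightly smaller ball is controlled by $\|V_c\|_{L^2(B_\rho)}$. To see this, use interior analyticity of $V_c$ (both activations are entire or analytic in a strip of width $\asymp\eps$) and Cauchy estimates on a complex neighborhood, relating complex values back to real $L^2$ values by a Remez/Bernstein-type inequality. This costs a factor of roughly $(CM/\eps)^{O(M)}\exp(O(M\log M))=\exp(O(M\log M))$.

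\emph{Stage two: one-dimensional recovery.} The reduced quantity is a one-variable function $t\mapsto\sum_jc_{\nu j}\sigma^{(n)}((t-\beta_{\nu j})/\eps)$ with $n=O(M)$, $K\le CM$ nodes, separation $\kappa_0/M$ (or $C^{-1}M^{-2}$ for Gauss--Legendre), and $|\beta|\le B_0$. It is observed only on an interval of length $\asymp\rho$. The one-dimensional recovery estimates of the Gaussian and tanh-difference sections should give a lower bound of the form $\exp(-CM^2/\log M)\,|c_{\nu\cdot}|$. For the Gaussian, write $e^{-(t-\beta)^2/\eps^2}=e^{-t^2/\eps^2}e^{-\beta^2/\eps^2}e^{2t\beta/\eps^2}$. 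This reduces recovery to an exponential sum with frequencies $2\beta_j/\eps^2$; Turán/Nazarov-type inequalities on a fixed interval give the loss, and the weights $e^{-\beta^2/\eps^2}\ge e^{-B_0^2M^2/\log M}$ produce exactly the claimed exponent $M^2/\log M$. For $\sigma_D$, use its partial-fraction/pole structure, with poles at distance $\asymp\eps$ from the real axis, together with a complex-analytic argument with comparable loss. Summing over $\nu$ gives the synthesis bound with an $\exp(O(M\log M))$ overhead, which is negligible.

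The main obstacle will be the one-dimensional recovery with exponent exactly $M^2/\log M$, uniform in the derivative order $n=O(M)$. The ratio $\kappa_2=\lambda_{\max}/\lambda_{\min}$ then yields \eqref{gub:eq:main-g}. Finally, \eqref{gub:eq:main-N} follows from $N\asymp M^d$ in Lemma~\ref{gub:lem:rounded-grid}, since $M^2/\log(M+2)\le C(N_M^{\mathrm{eq}})^{2/d}/\log(N_M^{\mathrm{eq}}+2)$.
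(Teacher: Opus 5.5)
Your outer bookkeeping matches the paper: $\lambda_{\max}$ bounded by the trace and column norms, the substitution $c\mapsto D_m^{-1}c$ for $\widetilde H$, the reduction to a synthesis bound $\|V_c\|_{L^2}\ge e^{-CM^2/T_M}\|c\|_2$, and the conversion via $N\asymp M^d$.

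\textbf{The gap.} It is in Stage one, where you assert that the $L^2$ norm of $P_\nu(\partial)V_c$ on a smaller ball is controlled by $\|V_c\|_{L^2(B_\rho(x_0))}$ at cost $\exp(O(M\log M))$. That is an inverse (Bernstein-type) inequality on the whole trial space $V_M$, and interior analyticity does not give it. Cauchy estimates bound $P_\nu(\partial)V_c$ by $\sup|V_c|$ over a complex neighborhood. The only available bound on that supremum is in terms of the coefficients: $\sqrt N e^{C/\eps^2}\|c\|_2$ for Gaussians on a fixed complex neighborhood, or $2\sqrt N\|c\|_2$ for $\sigma_D$ on a strip of width $\asymp\eps$. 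An upper bound in terms of $\|c\|_2$ is useless for bounding $\|V_c\|$ from below. The Remez/Bernstein inequality you invoke to pass from complex values back to real $L^2$ values is not a standard tool for arbitrary ridge sums in $O(M^{d-1})$ directions. It is itself a quantitative nondegeneracy statement for $V_M$, of the same kind as the one being proved, so the step is unsupported as written. Your cost count also includes only the Cauchy and selector factors. It omits the propagation factor, which is exactly where an exponent of order $\eps^{-2}$ (Gaussian) or $E_0$ (tanh difference) would enter.

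\textbf{The missing idea: duality.} Duality makes any inverse inequality unnecessary. Write $F_\nu(t)=\sum_jc_{\nu j}\sigma((t-\beta_{\nu j})/\eps)$ and let $U_\nu$ be the $L^2$ norm of $F_\nu^{(r)}$ on the observation interval. In coordinates $x=x_0+Q_\nu(t,y)$ aligned with $\omega_\nu$, the paper tests against $\psi_\nu=W_r(t,y)F_\nu^{(r)}(t)$, supported in the interior ball. Here $W_r=\eta_r(t)\chi_r(y)$, the transverse factor has unit integral, and $\eta_r\ge(3/4)^{r+1}$ on the observation interval.
\begin{itemize}
\item The selector identity gives $\int_\Omega(P_\nu(\nabla)V_c)\overline{\psi_\nu}\dd x=\int\eta_r|F_\nu^{(r)}|^2\dd t\ge e^{-C(r+1)}U_\nu^2$.
\item $P_\nu$ has real coefficients and even degree, so $P_\nu(\nabla)$ is formally self-adjoint. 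Integration by parts turns the left side into $\int_\Omega V_c\overline{P_\nu(\nabla)\psi_\nu}\dd x$.
\item The bound $\|P_\nu(\nabla)\psi_\nu\|_{L^2}\le e^{CMT_M}\|c_\nu\|_2$ uses only coefficient-level estimates on $F_\nu^{(l)}$, $l\le 2r$.
\end{itemize}
Hence $e^{-C(r+1)}U_\nu^2\le\|V_c\|_{L^2(\Omega)}e^{CMT_M}\|c_\nu\|_2$. Since one-dimensional recovery bounds $U_\nu$ below by a multiple of $\|c_\nu\|_2$, one factor of $\|c_\nu\|_2$ cancels, and the coefficient-dependent upper bound does no harm.

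\textbf{An alternative repair.} You could instead use a multivariate Taylor--Remez propagation whose remainder is proportional to $\|c\|_2$, absorbed by choosing a group of maximal coefficient norm. The paper uses this mechanism only in one variable, inside its local recovery lemmas. Its cost is $e^{C\eps^{-2}}$ or $e^{CE_0}$, not $\exp(O(M\log M))$.

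\textbf{A smaller point on the selectors.} Your product description is not homogeneous as stated: a polynomial in $x_d$ vanishing at the other Jacobi nodes must be homogenized, for example via factors $x_d^2-t_c^2|x|^2$ with $t_c^2\ne t_b^2$. For $d\ge3$ the description also fails to separate $(s\eta,t)$ from $(-s\eta,t)$. The paper's cardinal interpolation, followed by evenization and homogenization, avoids both problems.
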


Corollary~\ref{gub:cor:gl-upper} gives the same condition-number upper
bound for the Gauss-Legendre feature family.

\subsection{Proof outline}
\label{gub:sec:proof-strategy}
The proof of Theorem~\ref{gub:thm:main} uses the coefficient-recovery
inequality
\begin{equation}\label{gub:eq:recovery-goal}
 \|V_c\|_{L^2(\Omega)}
 \ge e^{-CM^2/T_M}\|c\|_2\qquad(c\in\C^N)
\end{equation}
for each activation and all sufficiently large $M$.
Since the full Sobolev norm contains the $L^2$ term, squaring
\eqref{gub:eq:recovery-goal} bounds the smallest eigenvalue of
$H_{m,\sigma,M}$ from below and proves linear independence.
Bounds on the individual column norms control the largest eigenvalue and
the effect of diagonal normalization.

The proof proceeds in four steps.
\begin{enumerate}
\item Section~\ref{gub:sec:recursive-directions} constructs homogeneous
direction selectors of common degree $O_d(M)$ for the
$O(M^{d-1})$ representative directions. Each selector equals one at the
selected representative and zero at the others. The corresponding
differential operator therefore isolates a single direction group.
\item Sections~\ref{gub:g:section} and~\ref{gub:d2:section} establish
one-dimensional recovery estimates for derivatives of the Gaussian and
tanh-difference ridge sums, respectively. The bounds retain explicit
dependence on derivative order and apply at the selector degree, which
grows with $M$. The Sobolev order $m$ remains fixed.
\item Compactly supported test functions within the fixed interior ball
in \eqref{gub:eq:domain-sandwich} transfer one-dimensional recovery to
the full ridge sum. Integration by parts moves the selector derivatives
onto the test functions. Choosing a direction group with maximal
Euclidean coefficient norm yields \eqref{gub:eq:recovery-goal}.
\item Section~\ref{gub:sec:spectrum} combines recovery and feature-norm
bounds to estimate the raw and normalized Gram spectra. The additional
terms in the exponent are of lower order and can be absorbed into the
stated bound. Section~\ref{gub:sec:gl-extension} applies
\eqref{gub:eq:gl-separation} to extend the result to Gauss-Legendre offsets.
\end{enumerate}

\section{Homogeneous direction selectors}
\label{gub:sec:recursive-directions}

Fix a sufficiently large real resolution $M$ and the direction
representatives $\omega_1,\ldots,\omega_J$ from
Section~\ref{gub:sec:actual-dictionary}. Each representative corresponds to a
group of $K$ ridge features with the prescribed signed offsets.
The constants below are independent of $M$ and the node indices.

We construct a differential operator of order $O_d(M)$ for each of the
$J=O(M^{d-1})$ direction groups. Each level of the recursive latitude
grid increases the interpolation degree by $O(M)$, and the number of
levels is fixed. We identify this grid with the prescribed antipodal
completion and construct cardinal polynomials on it. Symmetrization
combines each antipodal pair, and homogenization produces the required
differential operators without changing the polynomial values on the sphere.

Let $r\ge0$ be an integer and $F\in C^r(\R)$. For a homogeneous polynomial
$P(\xi)=\sum_{|\alpha|=r}p_\alpha\xi^\alpha$ on $\R^d$, define
$P(\nabla_x)=\sum_{|\alpha|=r}p_\alpha\partial_x^\alpha$.
For $\omega\in\mathbb S^{d-1}$, the chain rule gives
\begin{equation}\label{dir:operator}
 P(\nabla_x)F(\omega\cdot(x-x_0))
 =P(\omega)F^{(r)}(\omega\cdot(x-x_0)).
\end{equation}
Indeed, for each multi-index $\alpha$ of total degree $r$, the chain rule
gives $\partial^\alpha F(\omega\cdot(x-x_0))
=\omega^\alpha F^{(r)}(\omega\cdot(x-x_0))$. Summing these identities
with coefficients $p_\alpha$ proves \eqref{dir:operator}.
For a fixed target index $1\le\nu\le J$, the cardinal conditions
\[
 P(\omega_\mu)=\delta_{\nu\mu}\qquad(1\le\mu\le J)
\]
ensure that \eqref{dir:operator} isolates the $\nu$th direction group.
Here $\delta_{\nu\mu}$ is the Kronecker symbol.
The monomials must have a common degree, since monomials of different
degrees differentiate $F$ to different orders.

For a polynomial $P=\sum_\alpha p_\alpha x^\alpha$ in any finite number
of variables, write
$\|P\|_{\mathrm{coef},1}=\sum_\alpha|p_\alpha|$. This norm is
submultiplicative. Indeed, for $Q=\sum_\beta q_\beta x^\beta$, expansion
and the triangle inequality give
\begin{equation}\label{gub:hd:detail:coefficient-product}
 \|PQ\|_{\mathrm{coef},1}
 \le\sum_{\alpha,\beta}|p_\alpha||q_\beta|
 =\|P\|_{\mathrm{coef},1}\|Q\|_{\mathrm{coef},1}.
\end{equation}

\subsection{Antipodal completion and Jacobi nodes}
Write $n_0=q_M+1$ for the number of original circle points,
$n=n_{\theta,M}$ for the number after antipodal completion, and
$k=k_M^{\mathrm{lat}}$ for the latitude count. The bounds $n,k\asymp M$
hold for the integer-valued angular degrees $q_M\asymp M$ specified in
Section~\ref{gub:sec:actual-dictionary}. The latitude count $k$ and the
signed-offset count $K$ refer to different sets of nodes.

For $p=3,\ldots,d$, set $a_p=(p-3)/2$ and let
$t_{p,1}<\cdots<t_{p,k}$ be the zeros of the Jacobi polynomial
$P_k^{(a_p,a_p)}$. Define their transverse scales by
\begin{equation}\label{gub:hd:latitude-nodes}
 P_k^{(a_p,a_p)}(t_{p,b})=0,
 \qquad s_{p,b}=(1-t_{p,b}^2)^{1/2},\quad 1\le b\le k.
\end{equation}
The uncompleted sets $Y^{(p)}$ and completed sets $X^{(p)}$ are defined
recursively by
\begin{align}
 Y^{(2)}&=\{(\cos(2\pi a/n_0),\sin(2\pi a/n_0)):0\le a<n_0\},\notag\\
 Y^{(p)}&=\{(s_{p,b}\eta,t_{p,b}):\eta\in Y^{(p-1)},\ 1\le b\le k\},\label{dir:Y}\\
 X^{(2)}&=\{(\cos(2\pi a/n),\sin(2\pi a/n)):0\le a<n\},\notag\\
 X^{(p)}&=\{(s_{p,b}\eta,t_{p,b}):\eta\in X^{(p-1)},\ 1\le b\le k\}.
 \label{gub:hd:completed-recursion}
\end{align}
The recursive formulas for $Y^{(p)}$ and $X^{(p)}$ apply for
$p=3,\ldots,d$, with the superscript denoting the ambient dimension.
Each lift lies on the unit sphere because $s_{p,b}^2+t_{p,b}^2=1$.
The following lemma identifies $X^{(p)}$ with the antipodal completion
of $Y^{(p)}$ and establishes that the interpolation nodes are the
prescribed direction nodes.

\begin{lemma}
\label{gub:hd:antipodal-completion}
For $p=2,\ldots,d$, all nodes at a given level are distinct, and
\begin{equation}\label{gub:hd:antipodal-identity}
 X^{(p)}=Y^{(p)}\cup(-Y^{(p)})=-X^{(p)},\qquad
 \#X^{(p)}=nk^{p-2}.
\end{equation}
In particular, $X^{(d)}$ is the actual direction set $X_M$ and
$J=nk^{d-2}/2$.
\end{lemma}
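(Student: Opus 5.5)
Induction on $p$ gives everything. The two facts we use about the Jacobi nodes are the following. First, $t_{p,1}<\cdots<t_{p,k}$ lie in $(-1,1)$, so $s_{p,b}>0$. Second, $P_k^{(a,a)}$ has parity $(-1)^k$, so the node set is symmetric: $t_{p,k+1-b}=-t_{p,b}$ and hence $s_{p,k+1-b}=s_{p,b}$.

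\emph{Base case $p=2$.} The points $(\cos(2\pi a/n),\sin(2\pi a/n))$ with $0\le a<n$ are distinct, and there are $n$ of them. We check the identity $X^{(2)}=Y^{(2)}\cup(-Y^{(2)})$ separately for the two parities of $n_0$.
\begin{itemize}
\item If $n_0$ is even, then $n=n_0$. Negation shifts the angle by $\pi=2\pi(n_0/2)/n_0$, so $-Y^{(2)}=Y^{(2)}$ and the union is $Y^{(2)}=X^{(2)}$.
\item If $n_0$ is odd, then $n=2n_0$. The set $Y^{(2)}$ consists of the even multiples of $2\pi/n$. The set $-Y^{(2)}$ consists of the angles $2\pi(2a+n_0)/n$, which are exactly the odd multiples because $n_0$ is odd. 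Their union is $X^{(2)}$.
\end{itemize}
In both cases $-X^{(2)}=X^{(2)}$, because $n$ is even.

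\emph{Inductive step.} Assume the claim at level $p-1$.

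Distinctness: suppose $(s_{p,b}\eta,t_{p,b})=(s_{p,b'}\eta',t_{p,b'})$. Comparing last coordinates gives $b=b'$ by distinctness of the Jacobi zeros. Dividing by $s_{p,b}>0$ then gives $\eta=\eta'$. Hence $\#X^{(p)}=k\cdot\#X^{(p-1)}=nk^{p-2}$, and the same argument shows $\#Y^{(p)}=k\,\#Y^{(p-1)}$.

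Symmetry: by node symmetry,
\[
-(s_{p,b}\eta,t_{p,b})=(s_{p,k+1-b}(-\eta),t_{p,k+1-b}).
\]
Therefore $-X^{(p)}$ is the lift of $-X^{(p-1)}=X^{(p-1)}$, so $-X^{(p)}=X^{(p)}$. The same computation shows that $-Y^{(p)}$ is the lift of $-Y^{(p-1)}$.

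Lifting commutes with unions, so
\[
Y^{(p)}\cup(-Y^{(p)})=\mathrm{lift}\bigl(Y^{(p-1)}\cup(-Y^{(p-1)})\bigr)=\mathrm{lift}(X^{(p-1)})=X^{(p)}.
\]

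\emph{Conclusion.} The direction set $X_M$ was defined in Section~\ref{gub:sec:actual-dictionary} by two steps: lift the $n_0$ circle points through the levels, then add antipodes and keep distinct points. The uncompleted lift is $Y^{(d)}$, so $X_M=Y^{(d)}\cup(-Y^{(d)})=X^{(d)}$. Its cardinality is $nk^{d-2}$, consistent with \eqref{gub:eq:full-direction-count}.

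It remains to show $J=\#X_M/2=nk^{d-2}/2$, which requires that no point of $X_M$ equals its own antipode. This holds because every point is a unit vector, and $x=-x$ would force $x=0$. Hence $X_M$ splits into $\#X_M/2$ antipodal pairs.

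The only delicate point is the parity case analysis on the circle, together with checking that the paper's "retain distinct points" prescription agrees with $X^{(p)}$ at every level. The induction identity $\mathrm{lift}(-S)=-\mathrm{lift}(S)$ is what handles the latter.
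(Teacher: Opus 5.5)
Your proof is correct and follows essentially the same route as the paper: the same parity case analysis at the circle level, the identity $-\mathcal L_p(A)=\mathcal L_p(-A)$ from the symmetry $t_{p,k+1-b}=-t_{p,b}$ of the Jacobi zeros combined with lifting commuting with unions, and distinctness via the last coordinate and $s_{p,b}>0$. Your closing observation that no unit vector equals its antipode, giving $J=nk^{d-2}/2$, is also the paper's final step.
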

\begin{proof}
Jacobi polynomials have simple interior zeros. In the equal-parameter
case, they satisfy the parity identity
$P_k^{(a_p,a_p)}(-t)=(-1)^kP_k^{(a_p,a_p)}(t)$
\cite{Szego1975}.
Thus the roots are symmetric, and all $s_{p,b}$ are strictly positive.

At the circle level, when $n_0$ is even, rotation through $\pi$ moves each
point by exactly $n_0/2$ steps and leaves the set unchanged. Hence $n=n_0$.
When $n_0$ is odd, write the angles modulo $2\pi$ as integer multiples
of $\pi/n_0$. The original points occupy all even positions modulo
$2n_0$, and their antipodes occupy all odd positions. Their union
is the equally spaced set with $n=2n_0$ points, which proves the
identity at level two.

For the inductive step, define the lift of a set
$A\subset\mathbb S^{p-2}$ by
\[
 \mathcal L_p(A)=\{(s_{p,b}\eta,t_{p,b}):\eta\in A,\ 1\le b\le k\}.
\]
If $t_{p,b'}=-t_{p,b}$, then $s_{p,b'}=s_{p,b}$ and
\[
 -(s_{p,b}\eta,t_{p,b})=(s_{p,b'}(-\eta),t_{p,b'}).
\]
Symmetry of the latitude indices gives
\begin{equation}\label{gub:hd:detail:lift-commutation}
\begin{aligned}
 -\mathcal L_p(A)&=\mathcal L_p(-A),\\
 \mathcal L_p(A)\cup[-\mathcal L_p(A)]
 &=\mathcal L_p(A\cup(-A)).
 \end{aligned}
\end{equation}
Since $Y^{(p)}=\mathcal L_p(Y^{(p-1)})$ and
$X^{(p)}=\mathcal L_p(X^{(p-1)})$, the identity at level $p-1$
implies the identity at level $p$.

The last coordinate distinguishes different latitudes. Within each
latitude, multiplication by $s_{p,b}>0$ is injective, so all nodes are
distinct. Each lift therefore multiplies the number of nodes by exactly
$k$, and the initial $n$ circle points give $\#X^{(p)}=nk^{p-2}$.
Since no unit vector equals its antipode, each antipodal pair contains
two elements. The number of pairs is consequently half the node count.
\end{proof}

The recursive interpolation requires division by $s_{p,b}$ and
$t_{p,b}-t_{p,c}$. Since the polar latitudes approach the endpoints as
$k$ increases, the coefficient estimates must retain their dependence
on the degree. The following estimates bound the corresponding inverses
by $C_dk$ and $C_dk^2$.

\begin{lemma}
\label{gub:hd:jacobi-root-bounds}
For fixed $\lambda\ge0$ and each integer $k\ge1$, write the zeros of
$P_k^{(\lambda,\lambda)}$ as $\cos\varphi_j$, where
$0<\varphi_1<\cdots<\varphi_k<\pi$. There is $c_\lambda>0$, independent of $k$,
such that
\begin{equation}\label{gub:hd:angular-gap}
\begin{aligned}
 \sin\varphi_j&\ge c_\lambda/k &&(1\le j\le k),\\
 \varphi_{j+1}-\varphi_j&\ge c_\lambda/k &&(1\le j<k),\\
 |\cos\varphi_i-\cos\varphi_j|&\ge c_\lambda/k^2
 &&(1\le i,j\le k,\ i\ne j).
\end{aligned}
\end{equation}
\end{lemma}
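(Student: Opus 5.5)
The plan is to obtain the two angular bounds from classical asymptotics of Jacobi zeros and then derive the cosine separation from them by elementary trigonometry. By the parity identity $P_k^{(\lambda,\lambda)}(-t)=(-1)^kP_k^{(\lambda,\lambda)}(t)$, the zeros satisfy $\varphi_{k+1-j}=\pi-\varphi_j$. It therefore suffices to treat the zeros with $\varphi_j\le\pi/2$ and transfer the bounds by reflection. For the gaps, the reflection maps $\varphi_{j+1}-\varphi_j$ to $\varphi_{k+1-j}-\varphi_{k-j}$, so gaps in the right half are also gaps in the left half, apart from the central gap. The central gap is handled directly.

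The key input is the classical two-sided localization of the zeros of Jacobi polynomials with fixed parameters $\alpha=\beta=\lambda>-1$ (Szeg\H{o}, Theorem~6.21.2 and its refinements, e.g.\ Theorem~8.9.1). These results state that
\[
 \frac{j-1+c'}{k+c''}\,\pi\;\le\;\varphi_j\;\le\;\frac{j}{k+c''}\,\pi
\]
up to constants depending only on $\lambda$. More usefully, they state that $\varphi_j=(j\pi+O(1))/(k+\gamma)$ uniformly in $j$, with $\gamma$ depending on $\lambda$. I would rather use the cleaner interlacing form, which I will denote (*): for some $k_\lambda$ and all $k\ge k_\lambda$,
\[
 \varphi_j\ge c/k\ \text{ for } j=1,\qquad
 \varphi_{j+1}-\varphi_j\ge c/k\ \text{ for all } j.
\]
This follows from the Sturm comparison theorem applied to the Liouville normal form. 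Writing $u(\varphi)=(\sin\varphi)^{\lambda+1/2}P_k^{(\lambda,\lambda)}(\cos\varphi)$, one gets $u''+\bigl((k+\lambda+\tfrac12)^2+\tfrac{1/4-\lambda^2}{\sin^2\varphi}\bigr)u=0$. For $\lambda\ge1/2$ the potential correction is nonpositive, so the coefficient is at most $(k+\lambda+1/2)^2$. Sturm comparison with $\sin((k+\lambda+1/2)\varphi)$ then gives consecutive zero spacing at least $\pi/(k+\lambda+1/2)$. For $0\le\lambda<1/2$ the correction is positive and blows up at the endpoints. Near the poles I would instead invoke the Bessel-type asymptotics (Szeg\H{o}, Theorem~8.1.2 / Mehler--Heine), which give $k\varphi_j\to j_{\lambda,j}$, the Bessel zeros. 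In the interior, $\sin\varphi\ge\delta$, so the correction is $O(1)$ and Sturm comparison again gives spacing $\ge\pi/(k+C)$. The cases $\lambda=0$ (Legendre) and $\lambda=1/2$ (Chebyshev second kind, with explicit zeros $j\pi/(k+1)$) are consistent with this. Finitely many small $k$ are absorbed by shrinking $c_\lambda$, since for each fixed $k$ the zeros are simple and interior.

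I expect the main obstacle to be making the endpoint regime uniform for $0\le\lambda<1/2$. My first attempt will be to cite directly the uniform estimate $\varphi_j\in\bigl((j-1/2)\pi/(k+\lambda+1/2),\,j\pi/(k+\lambda+1/2)\bigr)$, valid for $|\alpha|,|\beta|\le1/2$ (Szeg\H{o}, Theorem~6.3.2 / 6.21.2). This estimate covers $\lambda\in[0,1/2]$ completely. For $\lambda\ge1/2$ I will use the Sturm argument above. Both yield $\sin\varphi_1\ge\sin(\min(\varphi_1,\pi/2))\ge(2/\pi)\varphi_1\ge c/k$ and the gap bound. The first inequality then follows for all $j$ because $\varphi_j\ge\varphi_1$ when $\varphi_j\le\pi/2$, and by reflection otherwise.

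Finally, for $i<j$,
\[
 |\cos\varphi_i-\cos\varphi_j|=2\sin\tfrac{\varphi_i+\varphi_j}{2}\,\sin\tfrac{\varphi_j-\varphi_i}{2}.
\]
The second factor is at least $(2/\pi)\cdot c_\lambda/(2k)$, since $0<\varphi_j-\varphi_i<\pi$. For the first factor, $(\varphi_i+\varphi_j)/2\in[\varphi_1,\varphi_k]$. Since $\sin$ is concave on $[0,\pi]$, its minimum over this interval is $\min(\sin\varphi_1,\sin\varphi_k)\ge c_\lambda/k$. Multiplying gives a bound of order $k^{-2}$, and adjusting $c_\lambda$ yields the third inequality.
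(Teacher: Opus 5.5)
Your argument is correct, but it is organized differently from the paper's.

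\textbf{The paper's route.} It first proves the endpoint exclusion $\varphi_1,\ \pi-\varphi_k\ge c_\lambda/k$ from the hypergeometric series, using $|Y(\theta)-1|\le e^{(2+2\lambda)k^2z}-1$ with $z\le\theta^2/4$. Once every zero lies in $[\varphi_1,\varphi_k]$, the Liouville coefficient satisfies $Q\le C_\lambda k^2$ between adjacent zeros, whatever the sign of $\tfrac14-\lambda^2$. A Dirichlet Poincar\'e inequality then gives all gaps at once; it has the same effect as your Sturm comparison, since both yield gap $\ge\pi/\sqrt{\sup Q}$. There is no case split in $\lambda$. The cosine bound comes from $\int_{\varphi_i}^{\varphi_j}\sin\varphi\,\mathrm d\varphi$ rather than your product formula.

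\textbf{Your route.} You split at $\lambda=\tfrac12$. For $\lambda\ge\tfrac12$, the favorable sign of the correction gives a clean Sturm comparison with the sharp constant $\pi/(k+\lambda+\tfrac12)$. For $0\le\lambda\le\tfrac12$, you import Szeg\H{o}'s localization, the same Theorem~6.21.2 the paper cites for Legendre nodes in its Gauss--Legendre discussion. This gives explicit constants. However, it relies on an external theorem exactly where the correction is positive and singular at the poles, and the paper's endpoint exclusion is what removes that dependence.

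\textbf{Two small points.}
\begin{itemize}
\item When you apply Sturm comparison on $[0,\varphi_1]$ to get $\varphi_1\ge\pi/(k+\lambda+\tfrac12)$, state that the Wronskian $u'v-uv'$ tends to $0$ as $\varphi\to0^+$. This holds because $\lambda+\tfrac12\ge1$ keeps $u'$ bounded there.
\item Your worry about uniformity in the Mehler--Heine route was unnecessary. Only $\varphi_1$ needs an endpoint bound, and the fixed-index limit $k\varphi_1\to j_{\lambda,1}>0$ already gives $\varphi_1\ge c/k$ for large $k$. After that, $Q\le Ck^2$ on $[\varphi_1,\varphi_k]$, so your own Sturm argument would also cover $0\le\lambda<\tfrac12$ without citing the localization theorem.
\end{itemize}
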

\begin{proof}
We first bound the endpoint arcs. Set
$Y(\theta)=P_k^{(\lambda,\lambda)}(\cos\theta)/
P_k^{(\lambda,\lambda)}(1)$ and $z=(1-\cos\theta)/2$.
The Jacobi hypergeometric formula
\cite{Szego1975} gives
\[
 Y(\theta)=\sum_{\ell=0}^k
 \frac{(-k)_\ell(k+2\lambda+1)_\ell}
      {(\lambda+1)_\ell\,\ell!}\,z^\ell,
\]
where $(a)_\ell$ denotes the rising factorial. For $0\le\ell\le k$,
the numerator has modulus at most
$[(2+2\lambda)k^2]^\ell$, while
$(\lambda+1)_\ell\ge\ell!$. Consequently,
\[
 |Y(\theta)-1|
 \le\sum_{\ell\ge1}
       \frac{[(2+2\lambda)k^2z]^\ell}{(\ell!)^2}
 \le e^{(2+2\lambda)k^2z}-1.
\]
Since $z\le\theta^2/4$, a sufficiently small fixed
$c_\lambda>0$ makes this bound less than $1/2$ for
$0\le\theta\le c_\lambda/k$. Symmetry gives the same exclusion
near $\pi$. Thus
$\varphi_1,\pi-\varphi_k\ge c_\lambda/k$, and
$\sin\theta\ge c_\lambda'/k$ on $[\varphi_1,\varphi_k]$.

To bound the interior gaps, set
$U(\theta)=(\sin\theta)^{\lambda+1/2}
P_k^{(\lambda,\lambda)}(\cos\theta)$. The Jacobi differential
equation \cite{Szego1975} yields
\[
 U''+Q(\theta)U=0,\qquad
 Q(\theta)=\left(k+\lambda+\tfrac12\right)^2
       +\frac{\tfrac14-\lambda^2}{\sin^2\theta}.
\]
The endpoint bound implies $Q(\theta)\le C_\lambda k^2$ between
any two adjacent zeros. On such an interval, $U$ vanishes at both
ends, so integration by parts and the Dirichlet Poincar\'e inequality give
\[
 \frac{\pi^2}{(\varphi_{j+1}-\varphi_j)^2}\int |U|^2
 \le\int |U'|^2
 =\int Q|U|^2
 \le C_\lambda k^2\int |U|^2.
\]
Hence $\varphi_{j+1}-\varphi_j\ge c_\lambda''/k$.
Finally, for $1\le i<j\le k$,
\[
 \begin{aligned}
 |\cos\varphi_i-\cos\varphi_j|
 &=\int_{\varphi_i}^{\varphi_j}\sin\varphi\,\mathrm d\varphi\\
 &\ge\frac{c_\lambda'}{k}(\varphi_j-\varphi_i)\ge\frac{c_\lambda'''}{k^2}.
 \end{aligned}
\]
The first bound also covers $k=1$, for which the pairwise bounds
are vacuous. Taking the minimum of the constants proves
\eqref{gub:hd:angular-gap}.
\end{proof}

Applying the lemma with $\lambda=a_p$, we choose the constant uniformly
over the finitely many levels $p=3,\ldots,d$:
\begin{equation}\label{gub:hd:all-layer-bounds}
 s_{p,b}\ge c_d/k,\qquad
 |t_{p,b}-t_{p,c}|\ge c_d/k^2\quad(b\ne c).
\end{equation}

\subsection{Cardinal polynomials and homogenization}
Lemma~\ref{gub:hd:antipodal-completion} identifies $X^{(d)}$ with
the prescribed direction set $X_M$. We construct cardinal
polynomials on the recursive sets $X^{(p)}$.

\begin{lemma}
\label{gub:hd:cardinal-interpolation}
For each $p=2,\ldots,d$, define the integer
$D_p=(n-1)+(p-2)(k-1)$. For every $\xi\in X^{(p)}$ there is a real
polynomial $L_\xi^{(p)}:\R^p\to\R$ of degree at most $D_p$ such that
$L_\xi^{(p)}(\zeta)=\delta_{\xi\zeta}$ for $\zeta\in X^{(p)}$,
where $\delta_{\xi\zeta}$ is one if $\xi=\zeta$ and zero otherwise.
Moreover,
\begin{equation}\label{gub:hd:cardinal-coefficient-bound}
 \max_{\xi\in X^{(d)}}\|L_\xi^{(d)}\|_{\mathrm{coef},1}
 \le e^{C_dM\log(M+2)}.
\end{equation}
\end{lemma}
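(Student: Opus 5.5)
Induction on the level $p$, building each cardinal polynomial as a product of a latitude Lagrange factor in the last coordinate and a lifted cardinal polynomial from level $p-1$, homogenized so that it is exact on the sphere.

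\emph{Base case $p=2$.} The set $X^{(2)}$ consists of the $n$ equally spaced points $\zeta_a=(\cos\theta_a,\sin\theta_a)$ with $\theta_a=2\pi a/n$. Take the trigonometric Dirichlet-type cardinal function
\[
 \frac1n\sum_{|j|\le (n-1)/2}e^{\ii j(\theta-\theta_a)}.
\]
When $n$ is even, one boundary frequency $\pm n/2$ is split with weight $1/2$, which can be written as $\frac1{2n}\bigl(e^{\ii n(\theta-\theta_a)/2}+e^{-\ii n(\theta-\theta_a)/2}\bigr)$. This function equals $\delta$ at the nodes. Writing $e^{\pm\ii j\theta}=(x_1\pm\ii x_2)^j$ turns it into a real polynomial of degree at most $n-1=D_2$. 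Each $(x_1\pm \ii x_2)^j$ has coefficient $1$-norm $2^j$, so $\|L^{(2)}_{\zeta}\|_{\mathrm{coef},1}\le 2^{n}$.

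\emph{Inductive step $p-1\to p$.} A node of $X^{(p)}$ has the form $\xi=(s_{p,b}\eta,t_{p,b})$ with $\eta\in X^{(p-1)}$. Let $\ell_b(t)=\prod_{c\ne b}(t-t_{p,c})/(t_{p,b}-t_{p,c})$ be the latitude Lagrange polynomial, of degree $k-1$. Write $L_\eta^{(p-1)}=\sum_{r\le D_{p-1}}H_r$ with $H_r$ homogeneous of degree $r$. Define
\[
 \widehat L_\eta(y,t)=\sum_r s_{p,b}^{-r}H_r(y),\qquad L^{(p)}_\xi(y,t)=\ell_b(t)\,\widehat L_\eta(y,t).
\]
At a node $\zeta=(s_{p,c}\eta',t_{p,c})$, the factor $\ell_b$ vanishes unless $c=b$. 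When $c=b$, homogeneity gives $H_r(s_{p,b}\eta')=s_{p,b}^rH_r(\eta')$, so $\widehat L_\eta(\zeta)=L^{(p-1)}_\eta(\eta')=\delta_{\eta\eta'}$. The degree of $L^{(p)}_\xi$ is at most $D_{p-1}+k-1=D_p$.

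This construction uses no sphere relation and does not rely on points being distinct in $\R^{p-1}$. The lifted nodes share $t$ only within a latitude, and on that latitude the map $\eta\mapsto s\eta$ is injective. Distinctness of the nodes is Lemma~\ref{gub:hd:antipodal-completion}.

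\emph{Coefficient bounds.} By \eqref{gub:hd:all-layer-bounds}, $s_{p,b}^{-r}\le (k/c_d)^{D_{p-1}}$. Also
\[
 \|\ell_b\|_{\mathrm{coef},1}\le\prod_{c\ne b}\frac{1+|t_{p,c}|}{|t_{p,b}-t_{p,c}|}\le (2k^2/c_d)^{k-1},
\]
using submultiplicativity \eqref{gub:hd:detail:coefficient-product} and $\|t-t_c\|_{\mathrm{coef},1}\le 2$. Submultiplicativity then gives
\[
 \|L^{(p)}_\xi\|_{\mathrm{coef},1}\le (2k^2/c_d)^{k}\,(k/c_d)^{D_{p-1}}\,\|L^{(p-1)}_\eta\|_{\mathrm{coef},1}.
\]
Since $D_{p-1}=O_d(M)$, $k\asymp M$, and there are only $d-2$ levels, the logarithm of the bound grows by $O_d(M\log(M+2))$ per level. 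Starting from $2^n=e^{O(M)}$, this yields \eqref{gub:hd:cardinal-coefficient-bound}.

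The main point needing care is the rescaling by $s_{p,b}^{-r}$. A naive substitution $y/s_{p,b}$ introduces large factors near the poles, and the bound $s_{p,b}\ge c_d/k$ of Lemma~\ref{gub:hd:jacobi-root-bounds} is what keeps these factors at $k^{O(M)}=e^{O(M\log M)}$ rather than something worse. The minimum latitude spacing $c_d/k^2$ plays the same role in the Lagrange factor.
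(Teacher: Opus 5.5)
Your proof is correct and matches the paper's argument. Your inductive step $\ell_{p,b}(t)\sum_r s_{p,b}^{-r}H_r(y)$ is literally the paper's $\ell_{p,b}(x_p)L_\eta^{(p-1)}(x'/s_{p,b})$, so the remark about avoiding a ``naive substitution'' is moot, no homogenization actually occurs, and the Lagrange and polar-rescaling coefficient bounds are the same. The only difference is the base case: you use a symmetric Dirichlet kernel where the paper takes the coefficientwise real part of $\frac1n\sum_{h=0}^{n-1}e^{-\ii h\theta_a}(x_1+\ii x_2)^h$, and both give a real cardinal polynomial of degree at most $n-1$ with coefficient norm at most $2^n$.
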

\begin{proof}
For $0\le a<n$, write $\theta_a=2\pi a/n$ and
$\xi_a=(\cos\theta_a,\sin\theta_a)$. Set
\begin{equation}\label{gub:hd:circle-cardinal}
 L_{\xi_a}^{(2)}(x_1,x_2)=\operatorname{Re}
 \left[\frac1n\sum_{h=0}^{n-1}e^{-\ii h\theta_a}(x_1+\ii x_2)^h\right].
\end{equation}
The real part is taken coefficientwise. At a circle node
$\xi_{a'}$, the bracketed expression is
\begin{equation}\label{gub:hd:detail:fourier-cardinality}
 \frac1n\sum_{h=0}^{n-1}
 \exp\!\left(\frac{2\pi\ii h(a'-a)}{n}\right)
 =\begin{cases}1,&a'=a,\\0,&a'\ne a.\end{cases}
\end{equation}
For $a'\ne a$, the ratio of this geometric series is a nontrivial $n$th
root of unity. Taking the real part preserves the cardinal values.
The polynomial has degree at most $n-1$. The binomial theorem gives
$\|(x_1+\ii x_2)^h\|_{\mathrm{coef},1}=2^h$. Multiplication by a
complex number of modulus one and taking real parts cannot increase
the coefficient norm. Hence
\begin{equation}\label{gub:hd:detail:circle-coefficients}
 \|L_{\xi_a}^{(2)}\|_{\mathrm{coef},1}
 \le\frac1n\sum_{h=0}^{n-1}2^h
 =\frac{2^n-1}{n}\le2^n.
\end{equation}

Assume that the polynomials have been constructed at level $p-1$,
and write $x=(x',x_p)\in\R^{p-1}\times\R$. For a target
$\xi=(s_{p,b}\eta,t_{p,b})$ with $\eta\in X^{(p-1)}$, set
\begin{equation}\label{gub:hd:cardinal-recursion}
\begin{aligned}
 L_\xi^{(p)}(x',x_p)&=\ell_{p,b}(x_p)
 L_\eta^{(p-1)}(x'/s_{p,b}),\\
 \ell_{p,b}(t)&=\prod_{\substack{1\le c\le k\\c\ne b}}
 \frac{t-t_{p,c}}{t_{p,b}-t_{p,c}}.
 \end{aligned}
\end{equation}
The scale $s_{p,b}$ is positive, depends only on the target node, and is
independent of $x'$. Hence \eqref{gub:hd:cardinal-recursion} defines a
polynomial. At a grid node $\zeta=(s_{p,c}\eta',t_{p,c})$, if $c\ne b$,
then $\ell_{p,b}(t_{p,c})=0$. If $c=b$, this factor equals one and
$s_{p,c}\eta'/s_{p,b}=\eta'$. The induction hypothesis then gives
\[
 L_\xi^{(p)}(\zeta)=L_\eta^{(p-1)}(\eta')
 =\delta_{\eta\eta'}.
\]
This proves the cardinal property at every node. Scaling preserves the
degree, and multiplication by $\ell_{p,b}$ increases it by $k-1$. Thus
$\deg L_\xi^{(p)}\le D_{p-1}+k-1=D_p$.

Let $B_p=\max\{1,\max_{\xi\in X^{(p)}}\|L_\xi^{(p)}\|_{\mathrm{coef},1}\}$.
For each latitude factor, \eqref{gub:hd:all-layer-bounds} and
submultiplicativity give
\begin{equation}\label{gub:hd:detail:latitude-coefficients}
 \|\ell_{p,b}\|_{\mathrm{coef},1}
 \le\prod_{c\ne b}\frac{1+|t_{p,c}|}{|t_{p,b}-t_{p,c}|}
 \le(C_dk^2)^{k-1}.
\end{equation}
If $L_\eta^{(p-1)}(x')=\sum_\alpha l_\alpha(x')^\alpha$,
then $0<s_{p,b}\le1$ and $|\alpha|\le D_{p-1}$ imply
\begin{equation}\label{gub:hd:detail:polar-scaling}
\begin{aligned}
 \|L_\eta^{(p-1)}(\,\cdot/s_{p,b})\|_{\mathrm{coef},1}
 &=\sum_\alpha|l_\alpha|s_{p,b}^{-|\alpha|}\\
 &\le s_{p,b}^{-D_{p-1}}\|L_\eta^{(p-1)}\|_{\mathrm{coef},1}
 \le(C_dk)^{D_{p-1}}B_{p-1}.
\end{aligned}
\end{equation}
The factor $(C_dk)^{D_{p-1}}$ accounts for the coefficient growth under
rescaling near the poles. With $C_d\ge2$, the coefficient bounds give
the recurrence
\begin{equation}\label{gub:hd:detail:coefficient-recurrence}
\begin{aligned}
 B_2&\le2^n,\\
 B_p&\le(C_dk^2)^{k-1}(C_dk)^{D_{p-1}}B_{p-1}.
\end{aligned}
\end{equation}
Taking logarithms and summing from $p=3$ to $d$ gives
\begin{equation}\label{gub:hd:detail:coefficient-log-sum}
\begin{aligned}
 \log B_d&\le n\log2\\
 &\quad+\sum_{p=3}^d\bigl[(k-1)\log(C_dk^2)+D_{p-1}\log(C_dk)\bigr].
\end{aligned}
\end{equation}
Here $n,k,D_p=O_d(M)$ and both logarithms are $O_d(T_M)$.
Each summand is $O_d(MT_M)$, and the number $d-2$ of summands is fixed.
Hence $\log B_d\le C_dMT_M$, which proves
\eqref{gub:hd:cardinal-coefficient-bound}. In dimension two, the sum is
empty and the conclusion follows directly from the circle estimate.
\end{proof}

Symmetrization combines the cardinal polynomials for each antipodal
pair. Homogenizing the resulting even monomials to a common degree
preserves their values on the unit sphere and gives the direction
selectors required in \eqref{dir:operator}.

\begin{proposition}[Homogeneous direction selectors]
\label{gub:prop:recursive-selector}\label{gub:ass:selectors}
For representatives $\omega_1,\ldots,\omega_J$ of the antipodal pairs,
there is a common even degree $r=2\lceil D_d/2\rceil=O_d(M)$ and real
homogeneous polynomials $P_1,\ldots,P_J$ such that
\begin{equation}\label{gub:hd:homogeneous-selector-bound}
\begin{aligned}
 P_\nu(\pm\omega_\mu)&=\delta_{\nu\mu}\quad(1\le\nu,\mu\le J),\\
 B_{\mathrm{sel}}&:=\max_\nu\|P_\nu\|_{\mathrm{coef},1}
 \le e^{C_dMT_M}.
\end{aligned}
\end{equation}
The selectors depend only on the prescribed directions, not on the
activation, offsets, target function, or output coefficients.
\end{proposition}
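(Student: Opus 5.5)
We build the selectors from the cardinal polynomials of Lemma~\ref{gub:hd:cardinal-interpolation} in two steps: first symmetrize over each antipodal pair, then homogenize. Fix a representative $\omega_\nu$. By Lemma~\ref{gub:hd:antipodal-completion}, both $\omega_\nu$ and $-\omega_\nu$ lie in $X^{(d)}=X_M$. Set
\[
 S_\nu(x)=\tfrac12\bigl[L^{(d)}_{\omega_\nu}(x)+L^{(d)}_{-\omega_\nu}(x)
 +L^{(d)}_{\omega_\nu}(-x)+L^{(d)}_{-\omega_\nu}(-x)\bigr].
\]
This polynomial is even, so it contains only monomials of even degree, and its degree is at most $D_d$.

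We check the node values using $X^{(d)}=-X^{(d)}$. At $\zeta=\pm\omega_\mu$ with $\mu\ne\nu$, all four terms vanish, because $\pm\zeta$ differs from $\pm\omega_\nu$. At $\zeta=\omega_\nu$, the terms equal $1,0,0,1$, since $\omega_\nu\ne-\omega_\nu$. The case $\zeta=-\omega_\nu$ follows by evenness. Hence $S_\nu(\pm\omega_\mu)=\delta_{\nu\mu}$. By the triangle inequality, a sign flip $x\mapsto-x$ leaves coefficient moduli unchanged. Together with \eqref{gub:hd:cardinal-coefficient-bound}, this gives $\|S_\nu\|_{\mathrm{coef},1}\le2e^{C_dMT_M}$.

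Now set $r=2\lceil D_d/2\rceil$, which is even and at least $D_d$. Since $D_d=(n-1)+(d-2)(k-1)$ and $n,k\asymp M$, we have $r=O_d(M)$. Write $S_\nu=\sum_{j=0}^{r/2}S_{\nu,2j}$, where $S_{\nu,2j}$ collects the homogeneous terms of degree $2j$. Define
\[
 P_\nu(x)=\sum_{j=0}^{r/2}|x|^{r-2j}S_{\nu,2j}(x),
 \qquad |x|^{r-2j}=(x_1^2+\cdots+x_d^2)^{(r-2j)/2}.
\]
Each exponent $(r-2j)/2$ is a nonnegative integer, so every summand is a homogeneous polynomial of degree exactly $r$, and $P_\nu$ is real and homogeneous of degree $r$. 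On the unit sphere $|x|=1$, so $P_\nu=S_\nu$ there. In particular $P_\nu(\pm\omega_\mu)=\delta_{\nu\mu}$. The homogenization uses only the norm $|x|$, so it depends on the directions alone, as the proposition requires.

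It remains to bound the coefficients. By submultiplicativity \eqref{gub:hd:detail:coefficient-product},
\[
 \|(x_1^2+\cdots+x_d^2)^{i}\|_{\mathrm{coef},1}\le d^{\,i}\le d^{\,r/2}.
\]
Splitting $S_\nu$ into homogeneous parts does not increase the total coefficient $\ell^1$ norm, because $\sum_j\|S_{\nu,2j}\|_{\mathrm{coef},1}=\|S_\nu\|_{\mathrm{coef},1}$. Hence
\[
 \|P_\nu\|_{\mathrm{coef},1}\le d^{r/2}\|S_\nu\|_{\mathrm{coef},1}\le 2d^{O_d(M)}e^{C_dMT_M}\le e^{C_d'MT_M},
\]
where the last step uses $T_M\ge\log2$ to absorb the factor $2d^{O_d(M)}$.

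None of these steps is hard. The delicate points are two. First, symmetrization is needed so that only even-degree parts appear; after that, a single even target degree $r$ can be reached by multiplying with integer powers of $|x|^2$, which avoids $|x|$ to odd powers. Second, the cost of homogenization is only $d^{O(M)}$, which the $e^{C_dMT_M}$ bound absorbs.
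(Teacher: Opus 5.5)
Your proof is correct and follows essentially the paper's argument: symmetrize the cardinal polynomials of Lemma~\ref{gub:hd:cardinal-interpolation} into an even polynomial, homogenize with powers of $x_1^2+\cdots+x_d^2$ up to $r=2\lceil D_d/2\rceil$, and absorb the $d^{r/2}$ homogenization cost into $e^{C_dMT_M}$. The only differences are cosmetic. The paper uses the two-term even polynomial $L^{(d)}_{\omega_\nu}(x)+L^{(d)}_{\omega_\nu}(-x)$, which is one of the two halves of your four-term average. It also computes $\|(x_1^2+\cdots+x_d^2)^h\|_{\mathrm{coef},1}=d^h$ exactly, whereas you bound this norm by submultiplicativity.
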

\begin{proof}
By Lemma~\ref{gub:hd:antipodal-completion}, the points
$\pm\omega_\mu$, $1\le\mu\le J$, are distinct nodes of $X^{(d)}$.
Using the cardinal polynomials of Lemma~\ref{gub:hd:cardinal-interpolation},
form an even polynomial by setting
\begin{equation}\label{gub:hd:detail:evenization}
 E_\nu(x)=L_{\omega_\nu}^{(d)}(x)+L_{\omega_\nu}^{(d)}(-x).
\end{equation}
At $\omega_\nu$, the first term is one and the second is zero.
Their values are interchanged at $-\omega_\nu$. Both terms vanish at
either member of every other antipodal pair. Thus
$E_\nu(\pm\omega_\mu)=\delta_{\nu\mu}$, and
\[
 \|E_\nu\|_{\mathrm{coef},1}
 \le2\|L_{\omega_\nu}^{(d)}\|_{\mathrm{coef},1}
 \le2e^{C_dMT_M}.
\]
Since $E_\nu(-x)=E_\nu(x)$, every coefficient of odd total degree
vanishes. Write
$E_\nu(x)=\sum_{|\alpha|\text{ even}}e_{\nu,\alpha}x^\alpha$ and set
\begin{equation}\label{gub:hd:homogenization}
 P_\nu(x)=\sum_{\substack{|\alpha|\le D_d\\|\alpha|\text{ even}}}
 e_{\nu,\alpha}x^\alpha(x_1^2+\cdots+x_d^2)^{(r-|\alpha|)/2}.
\end{equation}
The exponents of the added factors are nonnegative integers because
$r\ge D_d$ and both $r$ and $|\alpha|$ are even. Every summand has total
degree $r$. On the unit sphere, each added factor equals one. Hence
$P_\nu=E_\nu$ there, and the selector values are preserved. Since
$P_\nu(\omega_\nu)=1$, this is a nonzero homogeneous polynomial
of degree exactly $r$.

The coefficients of $(x_1^2+\cdots+x_d^2)^h$ are nonnegative. Their
sum is the value of this polynomial at $(1,\ldots,1)$, namely $d^h$, so
\begin{equation}\label{gub:hd:detail:homogenization-cost}
\begin{aligned}
 \|P_\nu\|_{\mathrm{coef},1}
 &\le\sum_\alpha|e_{\nu,\alpha}|d^{(r-|\alpha|)/2}\\
 &\le d^{r/2}\|E_\nu\|_{\mathrm{coef},1}
 \le2d^{r/2}e^{C_dMT_M}\le e^{C'_dMT_M}.
\end{aligned}
\end{equation}
The final inequality follows from $r=O_d(M)$, fixed $d$, and $T_M\ge1$.
\end{proof}

The recovery arguments in the next two sections use the exact selector
values, the common even degree, and the real coefficients.
The quantitative arguments use the bounds
$r=O_d(M)$ and $B_{\mathrm{sel}}\le e^{C_dMT_M}$.

\subsection{Planar selectors}
The preceding construction applies to the recovery estimates in every
dimension $d\ge2$. On the planar grid, a direct product yields an
alternative selector family of smaller common degree. Denote this degree by
$r_{\mathrm{pl}}$ to distinguish it from the degree fixed in
Proposition~\ref{gub:prop:recursive-selector}.

\begin{proposition}\label{gub:prop:selector-d2}
For the actual planar direction set, the selectors may be chosen with
a common even degree $r_{\mathrm{pl}}\le J$ and
$B_{\mathrm{sel}}\le e^{CM}$. They satisfy the same exact identities
$P_\nu(\pm\omega_\mu)=\delta_{\nu\mu}$ as in
Proposition~\ref{gub:prop:recursive-selector}.
\end{proposition}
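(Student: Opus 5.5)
The planar direction set is $X^{(2)}=\{\xi_a=(\cos\theta_a,\sin\theta_a):0\le a<n\}$, with $\theta_a=2\pi a/n$ and $n=n_{\theta,M}$ even. Here $J=n/2$, and the representatives are $\omega_\nu=\xi_{\nu-1}$ for $1\le\nu\le J$, whose antipodes are $\xi_{\nu-1+J}$. I would build each selector as a direct product of linear forms that vanish on the other antipodal pairs. For each direction $\omega_\mu$, let $\omega_\mu^\perp=(-\sin\theta_{\mu-1},\cos\theta_{\mu-1})$ and set $\lambda_\mu(x)=\omega_\mu^\perp\cdot x$. This linear form vanishes exactly on the line $\R\omega_\mu$, so on the circle it vanishes at $\pm\omega_\mu$ and nowhere else. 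Define
\[
 Q_\nu(x)=\prod_{\substack{1\le\mu\le J\\ \mu\ne\nu}}\lambda_\mu(x),
\]
which is homogeneous of degree $J-1$. By construction $Q_\nu(\pm\omega_\mu)=0$ for $\mu\ne\nu$. Also $Q_\nu(\omega_\nu)=\prod_{\mu\ne\nu}\sin(\theta_{\nu-1}-\theta_{\mu-1})$, and this is nonzero because the $\omega_\mu$ represent distinct lines.

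The degree $J-1$ may be odd, and then $Q_\nu(-\omega_\nu)=-Q_\nu(\omega_\nu)$. To fix the parity I would multiply by a homogeneous factor that does not vanish at $\omega_\nu$. If $J-1$ is even, take $\widetilde Q_\nu=Q_\nu$. If $J-1$ is odd, take $\widetilde Q_\nu=(\omega_\nu\cdot x)\,Q_\nu$. In both cases $\widetilde Q_\nu$ is even, of even degree $r_{\mathrm{pl}}\le J$, and $\widetilde Q_\nu(\omega_\nu)=Q_\nu(\omega_\nu)$. Set $P_\nu=\widetilde Q_\nu/Q_\nu(\omega_\nu)$. Then $P_\nu(\pm\omega_\nu)=1$ and $P_\nu(\pm\omega_\mu)=0$ for $\mu\ne\nu$. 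The parity of $J$ is the same for all $\nu$, so the degree is common to all selectors.

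For the coefficient bound, each linear form has $\|\lambda_\mu\|_{\mathrm{coef},1}\le2$, and so does $\omega_\nu\cdot x$. By submultiplicativity \eqref{gub:hd:detail:coefficient-product}, $\|\widetilde Q_\nu\|_{\mathrm{coef},1}\le2^{J}$. The main step is a lower bound for $|Q_\nu(\omega_\nu)|$. The differences $\theta_{\nu-1}-\theta_{\mu-1}$ for $\mu\ne\nu$ run over $\pi j/J$ with $j=1,\ldots,J-1$, taken modulo $\pi$ and up to sign. Since $|\sin|$ is $\pi$-periodic and even,
\[
 |Q_\nu(\omega_\nu)|=\prod_{j=1}^{J-1}\sin(\pi j/J)=\frac{J}{2^{J-1}},
\]
which is the classical product identity. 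This gives
\[
 \|P_\nu\|_{\mathrm{coef},1}\le\frac{2^J\,2^{J-1}}{J}\le4^J .
\]
Because $J=n/2\le q_M+1=O(M)$, we get $B_{\mathrm{sel}}\le e^{CM}$.

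The obstacle I expect is exactly this lower bound on the normalizing constant. A crude per-factor estimate of order $\min_j\sin(\pi j/J)\gtrsim 1/J$ would only give $J^{-J}=e^{-O(M\log M)}$, which loses the log factor. The exact product identity, or equivalently evaluating $(z^{2J}-1)/(z^2-1)$ at $z=1$ after writing the factors as roots-of-unity differences, removes that loss and gives the clean $e^{CM}$ bound.
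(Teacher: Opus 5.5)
Your proposal is correct and follows essentially the same route as the paper: you take products of the rotated linear forms $\omega_\mu^\perp\cdot x$, fix the parity with the extra factor $\omega_\nu\cdot x$ when $J$ is even, and use the exact identity $\prod_{j=1}^{J-1}\sin(\pi j/J)=J/2^{J-1}$ for the normalizing constant. The only differences are cosmetic. You use the per-factor coefficient bound $2$ where the paper uses $\sqrt2$, and you fix a particular choice of representatives, whereas the paper's argument is stated for any choice; the sign-invariance of the product gives that generality with no extra work.
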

\begin{proof}
The full circle grid has $n=2J$ points, and its unoriented directions
are equally spaced modulo $\pi$ with angular step $\pi/J$.
For any choice of representatives, let $v_\mu$ be $\omega_\mu$
rotated counterclockwise through $\pi/2$, and set
\begin{equation}\label{gub:hd:detail:planar-product}
 \mathcal Q_\nu(\xi)
 =\frac{\prod_{\substack{1\le\mu\le J\\\mu\ne\nu}}(v_\mu\cdot\xi)}
 {\prod_{\substack{1\le\mu\le J\\\mu\ne\nu}}(v_\mu\cdot\omega_\nu)}.
\end{equation}
The denominator is nonzero because the representatives belong to
distinct unoriented lines. At $\omega_\nu$, the numerator equals the
denominator. At any other $\omega_\mu$, the factor
$v_\mu\cdot\omega_\mu$ vanishes. Thus $\mathcal Q_\nu$ has the required
cardinal values at the representatives and is homogeneous of degree
$J-1$. For $J=1$, empty products are interpreted as one.

Changing the signs of representatives leaves the absolute value of the
denominator unchanged. The angular differences modulo $\pi$ range over
$\pi h/J$, $1\le h<J$.
Consequently,
\begin{equation}\label{gub:hd:detail:sine-product}
 \left|\prod_{\mu\ne\nu}(v_\mu\cdot\omega_\nu)\right|
 =\prod_{h=1}^{J-1}\sin\frac{\pi h}{J}
 =\frac{J}{2^{J-1}}.
\end{equation}
The last equality follows from the polynomial identity in the
complex variable $w$,
\[
 \frac{w^J-1}{w-1}
 =\prod_{h=1}^{J-1}(w-e^{2\pi\ii h/J}).
\]
At $w=1$ the left side has value $J$ by continuous extension, while
the modulus of the $h$th factor on the right is $2\sin(\pi h/J)$.

For a unit vector $v\in\R^2$, Cauchy-Schwarz gives
$\|v\cdot\xi\|_{\mathrm{coef},1}=|v_1|+|v_2|\le\sqrt2$.
Using \eqref{gub:hd:detail:coefficient-product} and
\eqref{gub:hd:detail:sine-product}, we obtain
\begin{equation}\label{gub:hd:detail:planar-coefficients}
 \|\mathcal Q_\nu\|_{\mathrm{coef},1}
 \le\frac{(\sqrt2)^{J-1}}{J/2^{J-1}}
 =\frac{(2\sqrt2)^{J-1}}{J}.
\end{equation}
If $J$ is odd, take $P_\nu=\mathcal Q_\nu$, whose degree
$r_{\mathrm{pl}}=J-1$ is already even. If $J$ is even, take
$P_\nu(\xi)=(\omega_\nu\cdot\xi)\mathcal Q_\nu(\xi)$, whose degree
is $r_{\mathrm{pl}}=J$. The additional factor equals one at the target
representative and preserves all other zeros. The coefficient norm
increases by a factor of at most $\sqrt2$. The even degree ensures
$P_\nu(-\omega_\mu)=P_\nu(\omega_\mu)$. Finally, $J=O(M)$
and \eqref{gub:hd:detail:planar-coefficients} give the stated
exponential bound.
\end{proof}

\subsection{Rotation and local coordinates}
\begin{lemma}\label{gub:lem:rotation}
For an orthogonal $d\times d$ matrix $Q$ and a homogeneous polynomial $P$
of degree $r$,
\[
 \|P(Q\,\cdot)\|_{\mathrm{coef},1}\le d^{r/2}\|P\|_{\mathrm{coef},1}.
\]
Under $x=x_0+Qz$ with $z\in\R^d$, one has
$P(\nabla_x)=P(Q\nabla_z)$, and volume is preserved.
\end{lemma}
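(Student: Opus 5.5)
The plan is to reduce the coefficient bound to the linear forms $(Qz)_i=\sum_j Q_{ij}z_j$, and to obtain the operator identity from the chain rule applied to the affine change of variables. In both parts only orthogonality of $Q$ is used.

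\emph{Coefficient bound.} Write $P(\xi)=\sum_{|\alpha|=r}p_\alpha\xi^\alpha$. Then
\[
 P(Qz)=\sum_{|\alpha|=r}p_\alpha\prod_{i=1}^d\bigl((Qz)_i\bigr)^{\alpha_i}.
\]
Each linear form $(Qz)_i$ has coefficient norm $\sum_j|Q_{ij}|$. The $i$th row of $Q$ is a unit vector, so Cauchy--Schwarz gives $\sum_j|Q_{ij}|\le\sqrt d$. The submultiplicativity \eqref{gub:hd:detail:coefficient-product} then bounds the coefficient norm of $\prod_i((Qz)_i)^{\alpha_i}$ by $(\sqrt d)^{|\alpha|}=d^{r/2}$. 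Summing with weights $|p_\alpha|$ and applying the triangle inequality gives
\[
 \|P(Q\,\cdot)\|_{\mathrm{coef},1}\le d^{r/2}\|P\|_{\mathrm{coef},1}.
\]

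\emph{Operator identity.} Let $x=x_0+Qz$ and $u(z)=f(x_0+Qz)$. The chain rule gives $\nabla_z u=Q^T(\nabla_x f)$. Since $Q^{-1}=Q^T$, this inverts to $\nabla_x=Q\nabla_z$, so that
\[
 \partial_{x_i}=\sum_j Q_{ij}\partial_{z_j}
\]
as operators on functions of $z$. These first-order operators have constant coefficients and therefore commute with one another. Consequently, a monomial $\partial_x^\alpha=\prod_i\partial_{x_i}^{\alpha_i}$ equals the same polynomial expression in the commuting symbols $(Q\nabla_z)_i$. Summing over $\alpha$ with weights $p_\alpha$ gives $P(\nabla_x)=P(Q\nabla_z)$.

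Equivalently, the identity can be checked on plane waves. Indeed, $P(\nabla_x)e^{\xi\cdot x}=P(\xi)e^{\xi\cdot x}$, and $\xi\cdot(x_0+Qz)=\xi\cdot x_0+(Q^T\xi)\cdot z$. Applying $P(Q\nabla_z)$ produces the symbol $P(QQ^T\xi)=P(\xi)$, which is the same multiplier.

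\emph{Volume.} The Jacobian of $z\mapsto x_0+Qz$ is $|\det Q|=1$, so volume is preserved.

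The only step needing care is the commutation of the differential operators in the operator identity, and constant coefficients handle it. The remaining steps are routine.
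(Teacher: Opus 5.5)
Your proposal is correct and follows essentially the same route as the paper: you bound each linear form $(Qz)_i$ by $\sqrt d$ via the unit rows of $Q$, apply submultiplicativity, and obtain $P(\nabla_x)=P(Q\nabla_z)$ from the chain rule together with commutation of constant-coefficient derivatives. The plane-wave check is a valid additional confirmation, and $|\det Q|=1$ settles the volume claim exactly as in the paper.
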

\begin{proof}
Write the $i$th row of $Q$ as $(Q_{i1},\ldots,Q_{id})$. Its Euclidean
norm is one, so the linear form
$\ell_i(z)=\sum_jQ_{ij}z_j$ satisfies
$\|\ell_i\|_{\mathrm{coef},1}\le\sqrt d$. For each multi-index
$\alpha$ of degree $r$, submultiplicativity gives
\[
 \left\|\prod_{i=1}^d\ell_i^{\alpha_i}\right\|_{\mathrm{coef},1}
 \le\prod_{i=1}^d(\sqrt d)^{\alpha_i}=d^{r/2}.
\]
Multiplication by the absolute values of the coefficients and summation
over $\alpha$ prove the inequality.

The relation $z=Q^T(x-x_0)$ gives $\nabla_x=Q\nabla_z$.
Since partial derivatives commute, the same substitution holds for
polynomial differential operators. Finally, $|\det Q|=1$ preserves
volume and the corresponding $L^2$ integrals.
\end{proof}

Let $e_1=(1,0,\ldots,0)\in\R^d$. For each target direction, choose an
orthogonal matrix $Q_\nu$ with $Q_\nu e_1=\omega_\nu$. Then
$\omega_\nu\cdot Q_\nu z=z_1$, so the corresponding direction group
depends only on the first local coordinate. The selector identity
\eqref{dir:operator} holds in these coordinates, and the rotated
coefficient norm is at most $d^{r/2}B_{\mathrm{sel}}$.
Since $r=O_d(M)$, rotation increases the logarithmic bound by at most
$O_d(M)$.

The fixed interior ball in \eqref{gub:eq:domain-sandwich} contains the
observation region after rotation to any target direction. Set
$a=\min\{\rho/(2\sqrt d),1/2\}$. For every
orthogonal $Q$,
\begin{equation}\label{gub:hd:detail:fixed-observation-cube}
 x_0+Q[-a,a]^d\subset\overline B_{\rho/2}(x_0)\Subset\Omega,
\end{equation}
because $|Qz|=|z|\le\sqrt d\,a\le\rho/2$ on that cube.
For the tanh-difference argument, $\eps_{D,M}\to0$, so
$4\sqrt d\,\eps_{D,M}\le\rho$ for all sufficiently large $M$.
Hence
\begin{equation}\label{gub:hd:detail:scaled-observation-cube}
 x_0+\eps_{D,M}Q[-2,2]^d
 \subset\overline B_{\rho/2}(x_0)\Subset\Omega.
\end{equation}
Both cubes are centered at the fixed point $x_0$ and have distance at
least $\rho/2$ from $\R^d\setminus\Omega$. The next two sections use
test functions supported in these regions, with sufficient regularity
for integration by parts inside $\Omega$, to recover the coefficients
from the local $L^2$ norm of the ridge sum.

\section{Gaussian coefficient recovery}
\label{gub:g:section}

For the Gaussian feature family, set $\eps=\eps_{G,M}$ and consider
real resolutions $M\ge M_0\ge1$, with $M_0$ chosen so that
$0<\eps\le1$. Recovery for a single direction group starts with an
estimate on a fixed frequency band. Fourier estimates and tail bounds
then give a lower bound on a fixed large interval. The entire extension
of the Gaussian transfers this bound to an interior observation
interval. Homogeneous direction selectors and test functions supported
in the fixed interior ball recover all groups from the local $L^2$ norm
of the ridge sum.

In the signed-offset ordering \eqref{gub:eq:local-notation},
$J$ is the number of antipodal direction pairs, $K$ is the number of
signed offsets in each group, and $JK=N_M^{\mathrm{eq}}$.
The real centered offsets are
$\beta_{\nu j}=b_j-\omega_\nu\cdot x_0$. We use the bounds
\begin{equation}\label{gub:g:bias-assumption}
 \begin{gathered}
 1\le K\le C_KM,\qquad |\beta_{\nu j}|\le B_0,\\
 |\beta_{\nu i}-\beta_{\nu j}|\ge c_b/M\quad(i\ne j).
 \end{gathered}
\end{equation}
The bounds hold uniformly for $1\le\nu\le J$ and $1\le i,j\le K$,
with $i\ne j$ in the separation condition. The constants
$C_K,B_0,c_b>0$ are fixed. The centered midpoint offsets satisfy
\eqref{gub:g:bias-assumption} with the constants $C_K,B_0$ from
\eqref{gub:eq:shifted-bias-input} and $c_b=\kappa_0$.

For a coefficient vector $c=(c_{\nu j})\in\C^{JK}$, write
\begin{equation}\label{gub:g:dictionary}
\begin{aligned}
 F_\nu(t)&=\sum_{j=1}^K c_{\nu j}e^{-(t-\beta_{\nu j})^2/\eps^2},\\
 V_c(x)&=\sum_{\nu=1}^JF_\nu(\omega_\nu\cdot(x-x_0)).
 \end{aligned}
\end{equation}
Choose fixed positive constants $C_r,C_P,C_J$ such that the selectors
in Proposition~\ref{gub:ass:selectors} have common even degree
$r\le C_rM$ and coefficient bound $B_{\mathrm{sel}}\le e^{C_PMT_M}$,
and the number of groups satisfies $J\le C_JM^{d-1}$.

\subsection{Exponential sums on a fixed band}
Fix a direction group $\nu$ and write $\beta_j=\beta_{\nu j}$ for
$1\le j\le K$. For an arbitrary vector
$v=(v_j)_{j=1}^K\in\C^K$, set
\[
 S(\tau)=\sum_{j=1}^Kv_je^{-\ii\beta_j\tau},\qquad\tau\in\R.
\]

\begin{lemma}
\label{gub:g:fixed-band}
If the real offsets $\beta_1,\ldots,\beta_K$ satisfy
\eqref{gub:g:bias-assumption}, then for every $v\in\C^K$,
\begin{equation}\label{gub:g:fixed-band-bound}
 \|S\|_{L^2(1,2)}\ge e^{-CKT_M}\|v\|_2.
\end{equation}
\end{lemma}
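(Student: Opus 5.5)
We prove the bound by writing $\|S\|_{L^2(1,2)}^2$ as a quadratic form and bounding its smallest eigenvalue below. Expanding gives
\[
 \|S\|_{L^2(1,2)}^2=v^*Gv,\qquad
 G_{ij}=\int_1^2e^{\ii(\beta_i-\beta_j)\tau}\dd\tau ,
\]
so $G$ is the Gram matrix of the exponentials $e^{-\ii\beta_j\tau}$ in $L^2(1,2)$, and it suffices to show $\lambda_{\min}(G)\ge e^{-2CKT_M}$. The exponentials are linearly independent because the $\beta_j$ are distinct, so $G$ is positive definite. The difficulty is quantitative: the separation $c_b/M$ is far below the Nyquist scale of a unit-length window. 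Ingham or Beurling-type inequalities therefore fail, and we must accept an exponential loss in $K$.

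The plan is a polynomial/Vandermonde argument via derivatives at one point. Set $\tau_0=3/2$ and factor out the centre, writing $S(\tau)=\sum_j w_je^{-\ii\beta_j(\tau-\tau_0)}$ with $w_j=v_je^{-\ii\beta_j\tau_0}$, so that $|w_j|=|v_j|$. The derivatives at the centre are
\[
 S^{(k)}(\tau_0)=\sum_j(-\ii\beta_j)^kw_j,\qquad 0\le k<K .
\]
The map $w\mapsto(S^{(k)}(\tau_0))_{k<K}$ is thus a Vandermonde matrix $V$ in the nodes $-\ii\beta_j$.

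Its inverse is controlled column by column by the Lagrange basis polynomials $\prod_{i\ne j}(z-\beta_i)/(\beta_j-\beta_i)$. Each has coefficient-$\ell^1$ norm at most
\[
 \prod_{i\ne j}\frac{1+B_0}{|\beta_j-\beta_i|} .
\]
Using separation, the $i$th closest node lies at distance at least $\lceil i/2\rceil c_b/M$. This gives the product bound $\prod|\beta_j-\beta_i|\ge ((K/2)!)^2(c_b/M)^{K-1}/C^K$. Since $K\le C_KM$, this yields $\|V^{-1}\|\le e^{CK}$, and the crude bound $(M/c_b)^{K}=e^{CKT_M}$ suffices in any case. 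Hence
\[
 \|w\|_2\le e^{CKT_M}\Bigl(\sum_{k<K}|S^{(k)}(\tau_0)|^2\Bigr)^{1/2}.
\]

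Next we bound the derivatives $S^{(k)}(\tau_0)$ by $\|S\|_{L^2(1,2)}$. We would use that $S$ is entire of exponential type $B_0$, meaning $|S^{(k)}|$ on the real line is governed by Bernstein-type growth. Cleaner is Markov/Remez-type control on $[1,2]$ for exponential sums with $K$ terms (Turán's lemma or Nazarov's inequality). The alternative route, expected to be the easier one to make rigorous, is to Taylor-approximate. On $[1,2]$, $S$ is within $e^{-cL}$ of a polynomial of degree $L\asymp K+M$, with error controlled by $\|v\|_1(B_0/2)^{L}/L!$. Markov's inequality on an interval gives $|p^{(k)}(\tau_0)|\le (CL^2)^k L\|p\|_{L^2(1,2)}$. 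For $k<K$, $L\lesssim M$, this costs $e^{CK\log M}=e^{CKT_M}$. The truncation error is $\|v\|_2\sqrt K (CB_0/L)^{L}$. Choosing $L=AK+A'M$ with $A$ large enough makes it smaller than half the main term $e^{-CKT_M}\|v\|_2$ (note $KT_M\lesssim M\log M$, while $L\log L\gtrsim AM\log M$), so it can be absorbed.

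Combining the three steps gives $\|v\|_2\le e^{C'KT_M}\|S\|_{L^2(1,2)}+\tfrac12\|v\|_2$, which proves \eqref{gub:g:fixed-band-bound}. The main obstacle is the bookkeeping in the absorption step, namely making the polynomial degree $L$ large enough to beat the Vandermonde loss while the Markov factor remains $e^{O(KT_M)}$. This works because all losses are of the form $\exp(O((K+M)\log M))$, the truncation gain is $\exp(-L\log L)$, and we may take $L$ a large multiple of $M$. Since $K\ge1$ and $T_M\ge1$, a loss of $e^{CM T_M}$ would not be acceptable unless $M\lesssim K$. That holds in the midpoint case ($K\asymp M$) but not in general, so for small $K$ we would take $L\asymp K+M$ only in the sense needed, choosing $L=A(K+1)\log$-scale. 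The Taylor remainder is uniformly controlled by $\|v\|_1 e^{B_0}(B_0)^L/L!$, independent of $M$, so $L=AK$ suffices and all losses become $e^{O(K\log K+K T_M)}$.
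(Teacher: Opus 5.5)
\textbf{Comparison with the paper's proof.} Your route is valid but differs from the paper's. The paper never uses derivatives. It samples $S$ at the $K$ equispaced points $\tau_\ell=1+\ell/(A_0K)\in[1,2]$, which gives a Vandermonde system in the unimodular nodes $z_j=e^{-\ii\beta_j/(A_0K)}$ with $|z_i-z_j|\gtrsim 1/(MK)$. Gautschi's bound then yields $\max_\ell|S(\tau_\ell)|\ge K^{-1/2}(\delta_M/2)^{K-1}\|v\|_2$. Finally, the Lipschitz bound $|S'|\le B_0\sqrt K\|v\|_2$ turns this point value into an $L^2(1,2)$ lower bound, at the cost of a constant factor in the exponent.

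You instead use Hermite data at $\tau_0=3/2$ and a Vandermonde in the nodes $-\ii\beta_j$ with separation $c_b/M$. You pass from $\|S\|_{L^2(1,2)}$ to $S^{(k)}(\tau_0)$, $k<K$, through the Taylor polynomial $p$ of degree $L\ge K-1$ at $\tau_0$, for which $p^{(k)}(\tau_0)=S^{(k)}(\tau_0)$ exactly. This step combines the Markov--Nikolskii bound $|p^{(k)}(\tau_0)|\le(2L^2)^k(L+1)\|p\|_{L^2(1,2)}$ with absorption of the $M$-independent remainder $\|v\|_1e^{B_0/2}(B_0/2)^{L+1}/(L+1)!$.

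The paper's argument is shorter and needs no truncation or absorption. Yours resembles the Taylor-plus-propagation arguments used later in the paper, for instance Lemma~\ref{gub:g:local-recovery}. It gives a sharper Vandermonde factor when $K\asymp M$, but the Markov factor $(2L^2)^{K-1}$ restores a total loss of order $e^{O(KT_M)}$.

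\textbf{The last paragraph is wrong.} Your final paragraph is confused, and its concluding claim is false. Taking $L=\lceil A'M\rceil$ produces no $e^{CMT_M}$ loss. $L$ enters the losses only through $(2L^2)^{K-1}(L+1)\sqrt K=e^{O(KT_M)}$, because Markov is applied only to orders $k<K$. Moreover, $L\ge K$ once $A'\ge C_K$, and the gain $e^{-L\log L+O(L)}$ beats $e^{O(KT_M)}\le e^{O(MT_M)}$ for large $A'$, uniformly in $1\le K\le C_KM$.

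Conversely, $L=AK$ does \emph{not} suffice when $K$ is small. The Vandermonde loss $\prod_{i\ne j}(1+B_0)/|\beta_j-\beta_i|$ is about $(CM/c_b)^{K-1}$ and grows with $M$, while the gain $e^{-AK\log(AK)}$ does not. For example, take $K=2$, $\beta_1-\beta_2=c_b/M$, and $v=(1,-1)/\sqrt2$. Then $\|S\|_{L^2(1,2)}\asymp 1/M$ and $S^{(k)}(\tau_0)=O(1/M)$, so a fixed-degree remainder of size $O(1)$ cannot be absorbed. Keep $L=\lceil A'M\rceil$ or $L=\lceil AKT_M\rceil$.

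Likewise, your intermediate claim $\|V^{-1}\|\le e^{CK}$ holds only when $K\asymp M$. In general you need the crude bound $e^{CKT_M}$, which is what you end up using.
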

\begin{proof}
Fix $A_0\ge\max\{2,2B_0\}$. For $1\le j\le K$, put
\[
 \begin{aligned}
 \tau_\ell&=1+\frac{\ell}{A_0K},\quad0\le\ell<K,\\
 z_j&=e^{-\ii\beta_j/(A_0K)},\qquad u_j=v_je^{-\ii\beta_j}.
 \end{aligned}
\]
The sample points lie in $[1,2]$ and satisfy
$S(\tau_\ell)=\sum_{j=1}^Ku_jz_j^\ell$.
Choose a fixed $c_{\mathrm{sep}}>0$ with
$c_{\mathrm{sep}}\le\min\{1,2c_b/(\pi A_0)\}$ and set
$\delta_M=c_{\mathrm{sep}}/(MK)$, so $0<\delta_M\le1$.
The absolute phase differences are bounded by
$2B_0/(A_0K)\le1$. Applying the chord estimate
$|e^{\ii s}-e^{\ii t}|\ge(2/\pi)|s-t|$ for $|s-t|\le\pi$ gives
\begin{equation}\label{gub:g:detail:band-separation}
 |z_i-z_j|\ge\frac{2c_b}{\pi A_0MK}\ge\delta_M
 \qquad(i\ne j).
\end{equation}
The nodes are distinct. For $1\le j\le K$, define the
Lagrange polynomial in the complex variable $z$ by
\[
 L_j(z)=\prod_{\substack{1\le i\le K\\i\ne j}}\frac{z-z_i}{z_j-z_i}
       =\sum_{\ell=0}^{K-1}a_{j\ell}z^\ell,
\]
so that $L_j(z_i)=\delta_{ji}$. Substitution of the sampled values gives
the inverse identity
\begin{equation}\label{gub:g:detail:band-inversion}
 \sum_{\ell=0}^{K-1}a_{j\ell}S(\tau_\ell)
 =\sum_{i=1}^Ku_iL_j(z_i)=u_j.
\end{equation}
Since $|z_i|=1$, each numerator factor has coefficient norm
$1+|z_i|=2$.
The coefficient-product bound \eqref{gub:hd:detail:coefficient-product}
gives the inverse Vandermonde estimate
\cite[Theorem~1]{Gautschi1962Vandermonde}:
\[
 \sum_{\ell=0}^{K-1}|a_{j\ell}|
 \le\prod_{i\ne j}\frac{1+|z_i|}{|z_j-z_i|}
 \le(2/\delta_M)^{K-1}.
\]
Since $|u_j|=|v_j|$, we have $\|u\|_2=\|v\|_2$.
Applying the coefficient bound in \eqref{gub:g:detail:band-inversion}
to an index with $|u_j|\ge K^{-1/2}\|u\|_2$ gives
\[
 \max_\ell|S(\tau_\ell)|
 \ge K^{-1/2}(\delta_M/2)^{K-1}\|v\|_2
 \ge e^{-C_1KT_M}\|v\|_2.
\]
Here $C_1>0$ is fixed. Taking logarithms and using
$\log(MK)\le CT_M$, which follows from $K\le C_KM$, proves the last
inequality after absorbing $K^{-1/2}$ into the exponential. Empty
products have value one, so the argument also covers $K=1$.

To obtain an integral lower bound, assume $v\ne0$ and set
$a_*=e^{-C_1KT_M}\|v\|_2$. At some $\tau_*\in[1,2]$,
$|S(\tau_*)|\ge a_*$. Differentiating the exponential sum gives the
uniform Lipschitz bound
\[
 |S'(\tau)|\le\sum_{j=1}^K|\beta_jv_j|
 \le B_0\sqrt K\|v\|_2\le C_2\sqrt K\|v\|_2,
\]
where $C_2\ge1$ is fixed.
Set $\ell_*=a_*/(2C_2\sqrt K\|v\|_2)$, so
$0<\ell_*\le1/2$. On the intersection of $[1,2]$ with the interval
of radius $\ell_*$ about $\tau_*$, the derivative bound gives
\[
 |S(\tau)|\ge |S(\tau_*)|-C_2\sqrt K\|v\|_2|\tau-\tau_*|
 \ge a_*/2.
\]
This intersection has length at least $\ell_*$. Consequently,
\[
 \|S\|_{L^2(1,2)}^2
 \ge\frac{a_*^2\ell_*}{4}
 =\frac{e^{-3C_1KT_M}}{8C_2\sqrt K}\|v\|_2^2.
\]
Absorbing $8C_2\sqrt K$ and taking a square root proves
\eqref{gub:g:fixed-band-bound}. The case $v=0$ is immediate.
\end{proof}

\subsection{Recovery on the real line and a fixed interval}
Retain the offsets $\beta_j$ and coefficient vector $v$ from the
preceding subsection, and define
\begin{equation}\label{gub:g:one-group}
 F(t)=\sum_{j=1}^Kv_je^{-(t-\beta_j)^2/\eps^2},\qquad t\in\R.
\end{equation}
We use the Fourier transform
$\widehat f(\tau)=\int_\R f(t)e^{-\ii t\tau}\dd t$ and the Plancherel identity
$\|f\|_{L^2(\R)}^2=(2\pi)^{-1}\|\widehat f\|_{L^2(\R)}^2$.
Translation and scaling of the classical Gaussian transform
\cite[Chapter~5]{SteinShakarchi2003} give
\begin{equation}\label{gub:g:gaussian-transform}
 \widehat F(\tau)=\eps\sqrt\pi\,e^{-\eps^2\tau^2/4}S(\tau).
\end{equation}

\begin{lemma}
\label{gub:g:whole-line}\label{gub:g:fixed-large-interval}
Assume \eqref{gub:g:bias-assumption} and set $L_*=B_0+2$.
There is a fixed $C_0>0$ such that, for every integer $r\ge0$,
\begin{equation}\label{gub:g:whole-line-bound}
 \|F^{(r)}\|_{L^2(\R)}\ge e^{-C_0KT_M}\|v\|_2.
\end{equation}
For integers $0\le r\le C_rM$, we also have
\begin{equation}\label{gub:g:fixed-large-bound}
 \|F^{(r)}\|_{L^2(-L_*,L_*)}\ge\tfrac12e^{-C_0KT_M}\|v\|_2.
\end{equation}
This second bound holds for all sufficiently large $M$, with a threshold
depending only on $C_K,B_0,c_b$, and $C_r$.
\end{lemma}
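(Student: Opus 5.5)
The whole-line bound follows from Plancherel and the fixed-band estimate of Lemma~\ref{gub:g:fixed-band}; the fixed-interval bound is then obtained by showing that the Gaussian tails of $F^{(r)}$ outside $(-L_*,L_*)$ are negligible.

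\textbf{Whole line.} Differentiation multiplies the Fourier transform by $(\ii\tau)^r$. With \eqref{gub:g:gaussian-transform}, Plancherel gives
\[
 \|F^{(r)}\|_{L^2(\R)}^2=\frac{\pi\eps^2}{2\pi}\int_\R\tau^{2r}e^{-\eps^2\tau^2/2}|S(\tau)|^2\dd\tau .
\]
I restrict the integral to $\tau\in[1,2]$. There $\tau^{2r}\ge1$ and $e^{-\eps^2\tau^2/2}\ge e^{-2}$, since $\eps\le1$. This yields
\[
 \|F^{(r)}\|_{L^2(\R)}\ge c\,\eps\,\|S\|_{L^2(1,2)} .
\]
The bound holds for every $r\ge0$, with no upper restriction. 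Lemma~\ref{gub:g:fixed-band} then gives $\|S\|_{L^2(1,2)}\ge e^{-CKT_M}\|v\|_2$. Since $\eps\ge M^{-1}\ge e^{-T_M}$ and $K\ge1$, the factor $\eps$ is absorbed into the exponential. This fixes $C_0$ and proves \eqref{gub:g:whole-line-bound}.

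\textbf{Fixed interval.} I will show that for $r\le C_rM$ the tail norm $\|F^{(r)}\|_{L^2(|t|>L_*)}$ is much smaller than $\tfrac12 e^{-C_0KT_M}\|v\|_2$.

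\emph{Derivative formula.} Write $e^{-s^2}$ derivatives through Hermite polynomials. This gives
\[
 F^{(r)}(t)=(-1)^r\eps^{-r}\sum_j v_jH_r\bigl((t-\beta_j)/\eps\bigr)e^{-(t-\beta_j)^2/\eps^2}.
\]

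\emph{Pointwise tail bound.} For $|t|\ge L_*$ we have $|t-\beta_j|\ge|t|-B_0\ge2$, so $s:=(t-\beta_j)/\eps$ satisfies $|s|\ge2/\eps$. I use a crude bound of the form $|H_r(s)|\le 2^r(r+|s|)^r$ (or $|H_r(s)|\le\sqrt{r!}\,2^{r/2}e^{s^2/2}$, using only half of the Gaussian). Combining the crude bound with $e^{-s^2}$ gives
\[
 |F^{(r)}(t)|\le \sqrt K\|v\|_2\,\eps^{-r}(C(r+|s|))^{r}e^{-s^2}.
\]

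\emph{Integrating the tail.} I substitute $s$ and integrate over $|s|\ge2/\eps$. The main factor is $e^{-3/\eps^2}$ after keeping some Gaussian decay aside. For the polynomial factors, I use $r\le C_rM$ and $\eps^{-1}\le M$, so that $\eps^{-r}(r+|s|)^r\le e^{C M T_M}\,(1+|s|)^{r}$. Then I bound $(1+|s|)^re^{-s^2/2}\le e^{Cr\log r}\le e^{CMT_M}$. The tail is therefore at most $e^{CMT_M-1/\eps^2}\sqrt K\|v\|_2$.

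\emph{Comparison with the main term.} With $\eps^{-2}=M^2/T_M$, the tail is smaller than $e^{-M^2/(2T_M)}\|v\|_2$. The target is $e^{-C_0KT_M}$, with $KT_M\le C_KMT_M$. Since $M^2/T_M\gg MT_M$, the tail is at most half the whole-line norm once $M$ is large, with a threshold depending only on $C_K,B_0,c_b,C_r$. The triangle inequality in $L^2$ then gives \eqref{gub:g:fixed-large-bound}.

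The main obstacle is the uniform Hermite bound at degree $r\asymp M$. I need to verify that the factor $\eps^{-r}$ together with the polynomial growth costs only $e^{O(MT_M)}$, which is of lower order than the Gaussian gain $e^{-cM^2/T_M}$. I would first try the Cramér-type bound $|H_r(s)|e^{-s^2/2}\le C\sqrt{2^rr!}$ to handle this cleanly.
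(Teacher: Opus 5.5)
Your proposal is correct and follows essentially the same route as the paper. For the whole-line bound, you restrict Plancherel to the band $[1,2]$ and apply Lemma~\ref{gub:g:fixed-band}; for the interval bound, a Gaussian tail estimate costs only $e^{O(MT_M)}$ against the gain $e^{-cM^2/T_M}$, and the triangle inequality finishes. The only difference is how the pointwise derivative bound is obtained: the paper gets $e\,l!\,\eps^{-l}e^{-(t-\beta)^2/(2\eps^2)}$ from Cauchy's estimate on a complex circle of radius $\eps$, while your Cram\'er-type Hermite variant gives the same bound with $\sqrt{2^r r!}$ in place of $r!$. Prefer that variant over the crude $(r+|s|)^r$ form, because it keeps the factor $e^{-s^2/2}$, which is monotone in $|s|$ and hence uniform over the offsets $j$.
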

\begin{proof}
\emph{Step 1: obtain the lower bound on the whole line.}
Finite Gaussian sums and all their derivatives are Schwartz functions,
so Plancherel and
$\widehat{F^{(r)}}=(\ii\tau)^r\widehat F$ give
\[
 \begin{aligned}
 \|F^{(r)}\|_{L^2(\R)}^2
 &=\frac{\eps^2}{2}\int_\R |\tau|^{2r}
 e^{-\eps^2\tau^2/2}|S(\tau)|^2\dd\tau\\
 &\ge\frac{\eps^2e^{-2}}2\|S\|_{L^2(1,2)}^2.
 \end{aligned}
\]
Here $0<\eps\le1$ and $|\tau|^r\ge1$ on the retained band.
Taking a square root and applying Lemma~\ref{gub:g:fixed-band} gives
\[
 \|F^{(r)}\|_{L^2(\R)}
 \ge\frac{e^{-1}}{\sqrt2}\eps e^{-CKT_M}\|v\|_2.
\]
Since $\log(1/\eps)\le T_M$ and $K\ge1$, we enlarge $C_0$ to absorb
the fixed factor and $\eps$. Thus \eqref{gub:g:whole-line-bound} holds
with $C_0$ independent of $r$.

\emph{Step 2: bound the derivative tails.}
For real $t,\beta$, write $z=t+u+\ii w$ on the complex circle
$|z-t|=\eps$, with $u,w\in\R$ and $u^2+w^2=\eps^2$. The identity
\[
 (t-\beta+u)^2-\bigl((t-\beta)^2/2-u^2\bigr)
 =\tfrac12(t-\beta+2u)^2\ge0
\]
implies
$\operatorname{Re}((z-\beta)^2)\ge(t-\beta)^2/2-\eps^2$.
For every integer $l\ge0$, Cauchy's derivative estimate
\cite[Chapter~4]{Ahlfors1979} yields
\begin{equation}\label{gub:g:real-derivative-bound}
 \left|\frac{\dd^l}{\dd t^l}e^{-(t-\beta)^2/\eps^2}\right|
 \le e\,l!\eps^{-l}e^{-(t-\beta)^2/(2\eps^2)}.
\end{equation}
For $|t|>L>B_0$, the inequalities
$|t-\beta_j|\ge|t|-B_0$ and $\sum_j|v_j|\le\sqrt K\|v\|_2$ give
\[
 |F^{(r)}(t)|\le e\sqrt K\,r!\eps^{-r}
 e^{-(|t|-B_0)^2/(2\eps^2)}\|v\|_2.
\]
Set $A_L=(L-B_0)/\eps$. After squaring, integration over both tails gives
\[
 \|F^{(r)}\|_{L^2(|t|>L)}^2
 \le 2e^2K(r!)^2\eps^{1-2r}\|v\|_2^2
       \int_{A_L}^{\infty}e^{-u^2}\dd u.
\]
If $L-B_0\ge2$, then $A_L\ge2$ because $\eps\le1$, and
\[
 \begin{aligned}
 \int_{A_L}^\infty e^{-u^2}\dd u
 &\le\frac{1}{A_L}\int_{A_L}^\infty u e^{-u^2}\dd u\\
 &=\frac{e^{-A_L^2}}{2A_L}\le Ce^{-A_L^2/2}.
 \end{aligned}
\]
Taking a square root yields
\begin{equation}\label{gub:g:tail-bound}
 \|F^{(r)}\|_{L^2(|t|>L)}
 \le C\sqrt K\,r!\eps^{1/2-r}
 e^{-(L-B_0)^2/(4\eps^2)}\|v\|_2.
\end{equation}

\emph{Step 3: absorb the tail uniformly.}
For $0\le r\le C_rM$, the bounds $K\le C_KM$,
$r!\le(r+1)^r$, and $\log(1/\eps)\le T_M$ give, for a fixed
$C_{\mathrm{tail}}>0$,
\[
 \begin{aligned}
 \log\bigl(C\sqrt K\,r!\eps^{1/2-r}\bigr)
 &\le C+\tfrac12\log K+r\log(r+1)+rT_M\\
 &\le C_{\mathrm{tail}}MT_M.
 \end{aligned}
\]
With $L_*=B_0+2$ and $\eps^{-2}=M^2/T_M$,
\eqref{gub:g:tail-bound} gives
\begin{equation}\label{gub:g:detail:tail-budget}
 \|F^{(r)}\|_{L^2(|t|>L_*)}
 \le e^{C_{\mathrm{tail}}MT_M-M^2/T_M}\|v\|_2.
\end{equation}
Because $KT_M\le C_KMT_M$ and
$(M^2/T_M)/(MT_M)=M/T_M^2\to\infty$, we choose the common threshold
$M_0$ sufficiently large, independently of $r,K,v$ and the offsets,
to ensure that
\[
 C_{\mathrm{tail}}MT_M-M^2/T_M\le-C_0KT_M-\log2.
\]
The tail norm is at most one half of the whole-line lower bound.
The triangle inequality
\[
 \|F^{(r)}\|_{L^2(\R)}
 \le\|F^{(r)}\|_{L^2(-L_*,L_*)}
    +\|F^{(r)}\|_{L^2(|t|>L_*)}
\]
then proves \eqref{gub:g:fixed-large-bound}.
\end{proof}

The auxiliary interval $[-L_*,L_*]$ may extend beyond the centered
projection $\{\omega_\nu\cdot(x-x_0):x\in\Omega\}$. To obtain a lower
bound within the observation region, the next subsection uses the
entire extension of $F^{(r)}$ to propagate the estimate to a fixed
interior interval.

\subsection{Recovery on an interior interval}
Taylor approximation and the following polynomial propagation estimate
transfer the lower bound on $[-L_*,L_*]$ to a smaller fixed interval.
\begin{lemma}[Polynomial propagation]
\label{gub:lem:poly}
Let $I\subset\R$ be a fixed nondegenerate compact interval and
$\mathcal K\subset\C$ a fixed bounded set. There is $A\ge2$, depending
only on these sets, such that, for every integer $n\ge0$ and every complex
polynomial $p$ of degree at most $n$,
\begin{equation}\label{gub:g:poly-bound}
 \sup_{z\in\mathcal K}|p(z)|\le A^{n+1}\|p\|_{L^2(I)}.
\end{equation}
\end{lemma}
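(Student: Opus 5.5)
We reduce to $I=[-1,1]$ by an affine change of variables and expand $p$ in orthonormal Legendre polynomials. Write $I=[\alpha,\beta]$ with $\alpha<\beta$, set $h=(\beta-\alpha)/2$, and let $\phi(s)=\alpha+h(s+1)$ map $[-1,1]$ onto $I$. Then $q=p\circ\phi$ is a polynomial of degree at most $n$ with
\[
 \|q\|_{L^2(-1,1)}=h^{-1/2}\|p\|_{L^2(I)}.
\]
The preimage $\mathcal K'=\phi^{-1}(\mathcal K)$ is again a bounded set, contained in some disc $|w|\le\Lambda$ with $\Lambda\ge1$ depending only on $I$ and $\mathcal K$. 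It therefore suffices to prove
\[
 \sup_{|w|\le\Lambda}|q(w)|\le A_1^{n+1}\|q\|_{L^2(-1,1)}
\]
and then absorb $h^{-1/2}$ into $A=\max\{2,A_1\max\{1,h^{-1/2}\}\}$. Since $A_1^{n+1}\max\{1,h^{-1/2}\}\le A^{n+1}$, this gives \eqref{gub:g:poly-bound}.

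For the reduced bound, expand $q=\sum_{k=0}^n a_k\widetilde P_k$, where $\widetilde P_k=\sqrt{(2k+1)/2}\,P_k$ are the orthonormal Legendre polynomials on $[-1,1]$. By orthonormality,
\[
 \sum_k|a_k|^2=\|q\|_{L^2(-1,1)}^2.
\]
Cauchy--Schwarz then gives, for every $w$,
\[
 |q(w)|\le\|q\|_{L^2(-1,1)}\Bigl(\sum_{k=0}^n|\widetilde P_k(w)|^2\Bigr)^{1/2},
\]
so the whole argument reduces to a uniform bound of the form $|\widetilde P_k(w)|\le B^{k+1}$ for $|w|\le\Lambda$.

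This bound follows from the Laplace integral $P_k(w)=\pi^{-1}\int_0^\pi\bigl(w+\sqrt{w^2-1}\cos\theta\bigr)^k\dd\theta$. For $|w|\le\Lambda$ it gives $|P_k(w)|\le(\Lambda+\sqrt{\Lambda^2+1})^k\le(3\Lambda)^k$. If one prefers to avoid branch issues, the same bound follows by induction from the three-term recurrence $(k+1)P_{k+1}=(2k+1)wP_k-kP_{k-1}$, which yields $|P_{k+1}|\le 2\Lambda|P_k|+|P_{k-1}|$ and hence $|P_k(w)|\le(3\Lambda)^k$ for $\Lambda\ge1$. Including the normalization factor, $|\widetilde P_k(w)|\le\sqrt{2k+1}\,(3\Lambda)^k$. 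Summing,
\[
 \Bigl(\sum_{k=0}^n|\widetilde P_k(w)|^2\Bigr)^{1/2}\le (n+1)\sqrt{2n+1}\,(3\Lambda)^n\le(6\Lambda)^{n+1},
\]
using $(n+1)^{3/2}\sqrt2\le 2^{n+1}$. So $A_1=6\Lambda$ works.

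The only step that needs care is making this growth bound for $P_k$ hold uniformly on a complex neighbourhood, with a base independent of $n$; the recurrence argument settles it without appeal to asymptotics. An alternative route is Markov's inequality with Taylor expansion around a point of $I$, but that produces factorial-type factors $n^{2k}/k!$, so the orthonormal-basis approach is the cleaner choice here.
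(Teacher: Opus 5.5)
Your proof is correct. It shares the paper's skeleton: affine reduction to $[-1,1]$, expansion in orthonormal Legendre polynomials, and Cauchy--Schwarz. It differs in how the bound is carried off the real interval.

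The paper applies Cauchy--Schwarz only on $[-1,1]$. There $|P_k|\le1$ gives $\|\widetilde p\|_{L^\infty(-1,1)}\le\frac{n+1}{\sqrt2}\|\widetilde p\|_{L^2(-1,1)}$, and the paper then cites the Bernstein--Walsh lemma to extend this sup bound to the disk at a cost of $C_B^n$.

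You instead apply Cauchy--Schwarz pointwise at complex $w$ and bound $P_k(w)$ directly on $|w|\le\Lambda$. The steps check out:
\begin{itemize}
\item the recurrence induction, with base cases $P_0=1$, $P_1=w$, and step requiring $6\Lambda^2+1\le9\Lambda^2$, which holds for $\Lambda\ge1$;
\item the final inequality $\sqrt2(n+1)^{3/2}\le2^{n+1}$;
\item the absorption of $h^{-1/2}$ into $A$.
\end{itemize}

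What your route buys is self-containment: a two-line induction replaces the cited lemma, and you get the explicit constant $A=\max\{2,6\Lambda\max\{1,h^{-1/2}\}\}$. Nothing essential is lost in the exponential base. Your Laplace-integral bound $(\Lambda+\sqrt{\Lambda^2+1})^k$ is exactly the Bernstein--Walsh growth factor for the disk of radius $\Lambda$, since that disk sits inside the confocal ellipse with foci $\pm1$ and semi-minor axis $\Lambda$. The recurrence's base $3\Lambda$ is only slightly worse, and any fixed geometric base suffices here.

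The paper's two-step version has the modest advantage of resting on two standard, citable inequalities: a Nikolskii-type $L^2\to L^\infty$ bound on the interval, followed by the complex extension. Your version fuses these into one direct estimate.
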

\begin{proof}
A fixed affine change of variables maps $I$ to $[-1,1]$ and maps
the set $\mathcal K$ into a disk $|z|\le B$, with fixed
$B>0$. Write $\widetilde p$ for the transformed polynomial.
Let $P_k$ denote the Legendre polynomial of degree $k$, normalized by
$P_k(1)=1$. Expand $\widetilde p$ in the orthonormal basis
$\sqrt{(2k+1)/2}\,P_k$. The bound $|P_k(t)|\le1$ on $[-1,1]$
\cite{Szego1975} and Cauchy-Schwarz give
\[
 \begin{aligned}
 \|\widetilde p\|_{L^\infty(-1,1)}
 &\le\left(\sum_{k=0}^n\frac{2k+1}{2}\right)^{1/2}
       \|\widetilde p\|_{L^2(-1,1)}\\
 &=\frac{n+1}{\sqrt2}\|\widetilde p\|_{L^2(-1,1)}.
 \end{aligned}
\]
The Bernstein-Walsh lemma \cite{RahmanSchmeisser2002}
bounds the supremum on this disk by
$C_B^n\|\widetilde p\|_{L^\infty(-1,1)}$, where $C_B\ge1$ depends only on $B$.
Absorbing $n+1$ into an exponential base and restoring the fixed
Jacobian factor proves \eqref{gub:g:poly-bound} for complex polynomials.
\end{proof}

Applied to a Taylor polynomial, the propagation estimate multiplies the
approximation error by $A^{n+1}$. We choose the fixed complex radius
large enough to preserve geometric decay after propagation, and then
choose the Taylor order.

\begin{lemma}\label{gub:g:local-recovery}
Fix $0<a\le1/2$ and put $I_0=[-a/2,a/2]$. Under
\eqref{gub:g:bias-assumption}, for integers $0\le r\le C_rM$ and all
sufficiently large $M$,
\begin{equation}\label{gub:g:local-bound}
\begin{aligned}
 \|F^{(r)}\|_{L^2(I_0)}
 &\ge\exp\{-C[\eps^{-2}+KT_M+r\log(r+2)]\}\|v\|_2\\
 &\ge e^{-C'(\eps^{-2}+MT_M)}\|v\|_2.
 \end{aligned}
\end{equation}
The constants and the threshold depend only on $a,C_K,B_0,c_b$, and $C_r$.
\end{lemma}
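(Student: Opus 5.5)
Write $G=F^{(r)}$. Each term of $F$ is entire, so $G$ is entire, and $G$ satisfies the lower bound \eqref{gub:g:fixed-large-bound} on $[-L_*,L_*]$. The plan is to propagate this bound from the large interval down to $I_0$. We first control $G$ on a large complex disk, then approximate $G$ by a Taylor polynomial centered at $0$, and finally use Lemma~\ref{gub:lem:poly} with $I=I_0$ and $\mathcal K=[-L_*,L_*]$.

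\emph{Step 1: complex bound on a large disk.} Fix a radius $\varrho\ge 2L_*$, chosen later depending only on $a$ and $L_*$. For $|z|\le 2\varrho$ we have
\[
|e^{-(z-\beta_j)^2/\eps^2}|=e^{-\operatorname{Re}(z-\beta_j)^2/\eps^2}\le e^{(\operatorname{Im}z)^2/\eps^2}\le e^{4\varrho^2\eps^{-2}}.
\]
Cauchy's estimate on circles of radius one around points of $|z|\le\varrho$ then gives
\[
\sup_{|z|\le\varrho}|G(z)|\le r!\,\sqrt K\,e^{C\varrho^2\eps^{-2}}\|v\|_2=:\Lambda.
\]
We have $\log\Lambda\le C(\eps^{-2}+r\log(r+2)+T_M)$.

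\emph{Step 2: Taylor approximation.} Let $p_n$ be the degree-$n$ Taylor polynomial of $G$ at $0$. Cauchy's estimates on $|z|\le\varrho$ give
\[
\sup_{|z|\le L_*}|G-p_n|\le 2\Lambda (L_*/\varrho)^{n+1},
\]
and the same bound holds on $I_0$. Let $A$ be the constant of Lemma~\ref{gub:lem:poly} for $I=I_0$ and $\mathcal K=[-L_*,L_*]$; it depends only on $a$ and $L_*$. Choose $\varrho$ so that $A L_*/\varrho\le 1/2$. Then
\[
\|G\|_{L^2(-L_*,L_*)}\le \sqrt{2L_*}\sup_{[-L_*,L_*]}|p_n|+C\Lambda(L_*/\varrho)^{n+1}
\]
and
\[
\sup_{[-L_*,L_*]}|p_n|\le A^{n+1}\bigl(\|G\|_{L^2(I_0)}+C\Lambda(L_*/\varrho)^{n+1}\bigr).
\]
Combining these,
\[
\tfrac12e^{-C_0KT_M}\|v\|_2\le CA^{n+1}\|G\|_{L^2(I_0)}+C\Lambda 2^{-(n+1)}.
\]

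\emph{Step 3: choice of $n$.} The main point is to pick $n$ so that the last term is at most $\tfrac14e^{-C_0KT_M}\|v\|_2$. This requires $n+1\ge C(\log(\Lambda/\|v\|_2)+KT_M)$, so we take $n\asymp \eps^{-2}+KT_M+r\log(r+2)$. With this $n$,
\[
\|G\|_{L^2(I_0)}\ge C^{-1}A^{-(n+1)}e^{-C_0KT_M}\|v\|_2,
\]
and since $\log A$ is fixed, this is the first inequality in \eqref{gub:g:local-bound}. The second inequality follows from $K\le C_KM$ and $r\le C_rM$, which give $r\log(r+2)\le CMT_M$. The step that needs care is balancing $\varrho$ against $A$. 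The Gaussian growth $e^{C\varrho^2/\eps^2}$ in the imaginary direction is what produces the unavoidable $\eps^{-2}$ term in the exponent. This is acceptable because $\varrho$ is fixed independently of $M$, so the growth only affects the constant $C$. The large-$M$ threshold is inherited from Lemma~\ref{gub:g:fixed-large-interval}.
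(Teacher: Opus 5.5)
Your proposal is correct and follows essentially the same route as the paper. Both arguments bound $F^{(r)}$ on a fixed complex disk through Cauchy's estimate and the Gaussian growth $e^{C\eps^{-2}}$, truncate the Taylor series at zero, and propagate from $I_0$ to $[-L_*,L_*]$ with a radius that is large relative to $AL_*$, so that the propagated remainder still decays geometrically. A truncation order $n\asymp\eps^{-2}+KT_M+r\log(r+2)$ then absorbs the remainder into the lower bound of Lemma~\ref{gub:g:fixed-large-interval}. The differences are cosmetic: you impose $AL_*/\varrho\le1/2$ where the paper takes $R_c=4AL_*$, and you work with $\|G\|_{L^2(-L_*,L_*)}$ where the paper uses $\sup_{I_*}|H|$.
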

\begin{proof}
The case $v=0$ is immediate. Otherwise, normalize $\|v\|_2=1$.
Extend $F$ to $\C$ by the same finite Gaussian sum, and set
$H(z)=F^{(r)}(z)$. Then $H$ is entire. Put $I_*=[-L_*,L_*]$ and
$U=\|H\|_{L^2(I_0)}$. Apply Lemma~\ref{gub:lem:poly} with $I=I_0$
and $\mathcal K=I_*$ to fix $A\ge2$ such that, for every integer
$n\ge0$ and every complex polynomial $p$ of degree at most $n$,
\[
 \sup_{I_*}|p|\le A^{n+1}\|p\|_{L^2(I_0)}.
\]
This $A$ depends only on $a$ and $L_*$. Set $R_c=4AL_*$, which is
independent of $M,r$, and the truncation order.

\emph{Step 1: bound the derivative on the fixed complex disk.}
For $|z|\le R_c$, apply Cauchy's derivative formula on the circle of
radius one centered at $z$. At every point $w$ on that circle,
\[
 \begin{aligned}
 \left|e^{-(w-\beta_j)^2/\eps^2}\right|
 &=\exp\!\left(\frac{(\operatorname{Im}w)^2
                         -(\operatorname{Re}w-\beta_j)^2}{\eps^2}\right)\\
 &\le e^{(R_c+1)^2/\eps^2}.
 \end{aligned}
\]
Cauchy's formula and $\sum_j|v_j|\le\sqrt K$ give
\begin{equation}\label{gub:g:complex-growth}
 \sup_{|z|\le R_c}|H(z)|
 \le\sqrt K\,r!e^{(R_c+1)^2/\eps^2}=:B_r,
\end{equation}
and hence
\begin{equation}\label{gub:g:detail:complex-budget}
 \log B_r\le C\eps^{-2}+r\log(r+2)+C\log(K+1).
\end{equation}

\emph{Step 2: estimate the remainder after propagation.}
For an integer $n\ge0$, let $p_n$ be the Taylor polynomial of $H$
at zero through order $n$. Cauchy's coefficient estimate on
$|z|=R_c$ bounds the coefficient of $z^k$ by $B_rR_c^{-k}$ for every
integer $k\ge0$. Thus
\[
 \begin{aligned}
 \delta_n:=\sup_{I_*}|H-p_n|
 &\le B_r\sum_{k=n+1}^\infty(L_*/R_c)^k\\
 &\le2B_r(4A)^{-(n+1)}.
 \end{aligned}
\]
Since $I_0\subset I_*$ and $|I_0|=a$, the triangle inequality gives
$\|p_n\|_{L^2(I_0)}\le U+\sqrt a\,\delta_n$. Polynomial propagation
and the remainder bound therefore yield
\begin{equation}\label{gub:g:taylor-master}
\begin{aligned}
 \sup_{I_*}|H|
 &\le A^{n+1}U+(\sqrt a\,A^{n+1}+1)\delta_n\\
 &\le A^{n+1}U+B_r4^{-n}.
 \end{aligned}
\end{equation}
In the last inequality, the coefficient of $B_r4^{-n}$ is bounded by
$\sqrt a/2+1/(2A^{n+1})\le1$, because $a\le1/2$ and $A\ge2$.
Thus the radius $R_c=4AL_*$ preserves the geometric factor $4^{-n}$
after propagation. We choose $n$ so that this error is smaller than the
large-interval lower bound.

\emph{Step 3: choose the Taylor order and recover the local norm.}
By Lemma~\ref{gub:g:fixed-large-interval} and
$\|H\|_{L^2(I_*)}\le\sqrt{2L_*}\sup_{I_*}|H|$, we have
\[
 \sup_{I_*}|H|\ge b_*:=\frac{e^{-C_0KT_M}}{2\sqrt{2L_*}}.
\]
Choose the truncation order
\[
 n=\max\left\{0,\left\lceil\frac{\log(2B_r/b_*)}{\log4}\right\rceil\right\}.
\]
Then $B_r4^{-n}\le b_*/2$, and \eqref{gub:g:detail:complex-budget} gives
\begin{equation}\label{gub:g:detail:taylor-degree}
\begin{aligned}
 n+1&\le C[\eps^{-2}+KT_M+r\log(r+2)]\\
 &\le C'(\eps^{-2}+MT_M).
\end{aligned}
\end{equation}
The term $\log(K+1)$ and the fixed constants are absorbed into $KT_M$, using
$K\ge1$ and $T_M\ge\log3$. The second inequality uses
$K\le C_KM$ and $r\le C_rM$. Equation~\eqref{gub:g:taylor-master} gives
$b_*\le A^{n+1}U+b_*/2$, whence
\[
 U\ge\tfrac12b_*A^{-(n+1)}
 \ge\exp\{-C[\eps^{-2}+KT_M+r\log(r+2)]\}.
\]
Rescaling by the original $\|v\|_2$ proves the assertion.
\end{proof}

Since $\eps^{-2}=M^2/T_M$ and $MT_M=o(M^2/T_M)$, the quantity
$\eps^{-2}+MT_M$ in \eqref{gub:g:local-bound} has order $M^2/T_M$.

\subsection{Coefficient recovery on the domain}
A direction selector isolates the $r$th derivative of a single group.
Multiplying this derivative by a cutoff supported inside $\Omega$ gives
a test function. Integration by parts transfers the selector to this
function and relates the local recovery estimate to
$\|V_c\|_{L^2(\Omega)}$.

\begin{lemma}[Compactly supported cutoffs]\label{gub:g:cutoff}
For an integer $r\ge0$, define
$\theta_r(u)=(1-u^2)_+^{r+1}$ for $u\in\R$ and
$Z_r=\int_{-1}^1\theta_r(u)\dd u>0$, where $u_+=\max\{u,0\}$.
For fixed $a>0$ and $(t,y)\in\R\times\R^{d-1}$, put
\begin{equation}\label{gub:g:cutoff-definition}
\begin{aligned}
 \eta_r(t)&=\theta_r(t/a),\\
 \chi_r(y)&=\prod_{l=1}^{d-1}\frac{\theta_r(y_l/a)}{aZ_r},\\
 W_r(t,y)&=\eta_r(t)\chi_r(y).
 \end{aligned}
\end{equation}
Then $\chi_r\ge0$, $\int_{\R^{d-1}}\chi_r(y)\dd y=1$, and
\begin{equation}\label{gub:g:cutoff-positive}
 \eta_r(t)\ge(3/4)^{r+1}\quad(|t|\le a/2).
\end{equation}
The function $W_r$ belongs to $W^{r,\infty}(\R^d)$ and is supported
in $[-a,a]^d$. There is $C_{d,a}>0$, independent of $r$, such that
\begin{equation}\label{gub:g:cutoff-multivariate-bound}
 \max_{|\alpha|\le r}\|\partial^\alpha W_r\|_{L^\infty(\R^d)}
 \le e^{C_{d,a}(r+1)\log(r+2)}.
\end{equation}
\end{lemma}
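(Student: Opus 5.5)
The claims split into elementary properties of $\theta_r$ and one derivative bound. I will dispose of the elementary parts first and then concentrate on \eqref{gub:g:cutoff-multivariate-bound}.

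\emph{Elementary properties.} Nonnegativity of $\chi_r$ is immediate, since $\theta_r\ge0$. For the normalization, the substitution $u=y_l/a$ gives $\int_\R\theta_r(y_l/a)\dd y_l=aZ_r$ for each factor, so $\int_{\R^{d-1}}\chi_r=1$ by Fubini. For the lower bound \eqref{gub:g:cutoff-positive}, note that $|t|\le a/2$ implies $1-(t/a)^2\ge3/4$, hence $\eta_r(t)\ge(3/4)^{r+1}$. The support claim follows because $\theta_r$ vanishes outside $[-1,1]$. For regularity, $\theta_r\in C^{r}(\R)$: the function $(1-u^2)^{r+1}$ has a zero of order $r+1$ at $u=\pm1$, so all derivatives up to order $r$ match zero there. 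Since $W_r$ is a tensor product of such functions, $W_r\in C^r_c(\R^d)\subset W^{r,\infty}(\R^d)$.

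\emph{Derivative bound.} The main step is a one-dimensional estimate: for $0\le l\le r$,
\[
 \|\theta_r^{(l)}\|_{L^\infty}\le e^{C(r+1)\log(r+2)}.
\]
On $[-1,1]$, $\theta_r$ is the polynomial $(1-u^2)^{r+1}=\sum_{k=0}^{r+1}\binom{r+1}{k}(-1)^ku^{2k}$, whose coefficient norm is $2^{r+1}$. Differentiating $l$ times multiplies each coefficient by at most $(2r+2)^l$. For $|u|\le1$ this gives
\[
 |\theta_r^{(l)}(u)|\le2^{r+1}(2r+2)^r,
\]
and outside $[-1,1]$ the derivative is zero. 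This yields the displayed bound.

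The remaining factors are then controlled as follows.
\begin{itemize}
\item The scaling $u=t/a$ contributes a factor $a^{-l}$, which is at most $e^{C(r+1)}$ for fixed $a$.
\item For the normalization, $Z_r\ge\int_{-1/2}^{1/2}(3/4)^{r+1}\dd u=(3/4)^{r+1}$, so $(aZ_r)^{-(d-1)}\le e^{C_{d,a}(r+1)}$.
\item For $|\alpha|\le r$, the derivative $\partial^\alpha W_r$ is the product of $d$ one-dimensional factors $\partial^{\alpha_i}$ applied to the scaled $\theta_r$, times the normalization. Each factor is bounded by $e^{C(r+1)\log(r+2)}$, and $d$ is fixed.
\end{itemize}
Multiplying these bounds gives \eqref{gub:g:cutoff-multivariate-bound}.

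The only delicate point is keeping $C_{d,a}$ independent of $r$. This is why every bound above is written in the form $e^{O((r+1)\log(r+2))}$ rather than with $r$-dependent constants.
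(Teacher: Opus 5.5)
Your proof is correct and follows the paper's argument essentially step for step. You use the same binomial expansion of $(1-u^2)^{r+1}$ to get $\|\theta_r^{(l)}\|_{L^\infty}\le 2^{r+1}(2r+2)^l$, the same order-$(r+1)$ endpoint zeros for regularity, and the same product of one-dimensional bounds over the $d$ factors. The only difference is your cruder normalization bound $Z_r\ge(3/4)^{r+1}$ in place of the paper's Bernoulli estimate $Z_r\ge[2(r+1)]^{-1/2}$. This is harmless, since $(aZ_r)^{-(d-1)}\le e^{C_{d,a}(r+1)}$ is still absorbed into the final exponent $C_{d,a}(r+1)\log(r+2)$.
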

\begin{proof}
On $|u|\le[2(r+1)]^{-1/2}$, Bernoulli's inequality gives
\[
 (1-u^2)^{r+1}\ge1-(r+1)u^2\ge\tfrac12.
\]
Integration over this interval gives
$Z_r\ge[2(r+1)]^{-1/2}$. The upper bound $Z_r\le2$ follows from
$0\le\theta_r\le1$. For the derivatives, expand on $[-1,1]$:
\[
 (1-u^2)^{r+1}
 =\sum_{j=0}^{r+1}(-1)^j\binom{r+1}{j}u^{2j}.
\]
For integers $0\le l\le r$, the terms with $2j<l$ vanish upon
differentiation. The differentiation factors in the remaining terms
are at most $(2r+2)^l$, and the binomial coefficients sum to $2^{r+1}$.
The polynomial has a zero of order $r+1$ at each endpoint, so its
derivatives through order $r$ agree there with those of the zero
extension. Repeated integration by parts against a smooth test function
produces no endpoint terms through order $r$. Hence the extended
function belongs to $W^{r,\infty}(\R)$ and satisfies
\begin{equation}\label{gub:g:detail:cutoff-one-dimensional}
 \|\theta_r^{(l)}\|_{L^\infty(\R)}\le2^{r+1}(2r+2)^l,
 \qquad0\le l\le r.
\end{equation}

Substitution $y_l=au$ in each transverse factor gives
$\int\chi_r=1$. The condition $|t/a|\le1/2$ gives
\eqref{gub:g:cutoff-positive}. For a multi-index $\alpha$ with
$|\alpha|\le r$, scaling and multiplying the one-dimensional estimates
give
\[
 \|\partial^\alpha W_r\|_{L^\infty(\R^d)}
 \le 2^{d(r+1)}\left(\frac{2r+2}{a}\right)^{|\alpha|}
 (aZ_r)^{-(d-1)}.
\]
Using $|\alpha|\le r$, the logarithm of this upper bound is at most
\[
 d(r+1)\log2+(r+d-1)\bigl[\log(2r+2)+|\log a|\bigr].
\]
For fixed $d,a$, this is at most $C_{d,a}(r+1)\log(r+2)$.
These estimates, together with the support and weak regularity of the
one-dimensional factors, prove \eqref{gub:g:cutoff-multivariate-bound}
and the stated support and regularity of the product.
\end{proof}

\begin{theorem}[Gaussian coefficient recovery]
\label{gub:g:coefficient-recovery}
Let $\eps=\eps_{G,M}$ and use the selectors of
Proposition~\ref{gub:ass:selectors}. Under
\eqref{gub:eq:domain-sandwich} and \eqref{gub:g:bias-assumption}, for all
sufficiently large real $M$ and every $c\in\C^{JK}$,
\begin{equation}\label{gub:g:coefficient-bound}
\begin{aligned}
 \|V_c\|_{L^2(\Omega)}
 &\ge e^{-C(\eps^{-2}+MT_M)}\|c\|_2\\
 &\ge e^{-C'M^2/T_M}\|c\|_2.
\end{aligned}
\end{equation}
The constants and the threshold are independent of $M$, the node
indices, and $c$. They may depend on the fixed domain, interior ball,
and construction parameters.
\end{theorem}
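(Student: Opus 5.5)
Fix a target group $\nu$ with $\|c_{\nu\cdot}\|_2=\max_\mu\|c_{\mu\cdot}\|_2$, so that $\|c_{\nu\cdot}\|_2\ge J^{-1/2}\|c\|_2$. Let $Q_\nu$ be the rotation with $Q_\nu e_1=\omega_\nu$ and work in the local coordinates $x=x_0+Q_\nu z$, $z=(t,y)$. We apply the selector $P_\nu$ of Proposition~\ref{gub:ass:selectors}, of degree $r\le C_rM$. By \eqref{dir:operator},
\[
 P_\nu(\nabla_x)V_c(x)=\sum_\mu P_\nu(\omega_\mu)F_\mu^{(r)}(\omega_\mu\cdot(x-x_0))=F_\nu^{(r)}(t).
\]
Here we use that $P_\nu$ has even degree, so $P_\nu(-\omega_\mu)=P_\nu(\omega_\mu)$; the identity as written only needs the values at the representatives.

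\emph{Choice of test function.} Let $W_r$ be as in Lemma~\ref{gub:g:cutoff} with $a=\min\{\rho/(2\sqrt d),1/2\}$. By \eqref{gub:hd:detail:fixed-observation-cube} it is supported inside $\Omega$. Take
\[
 \phi(t,y)=\overline{G(t)}\,W_r(t,y),\qquad G:=F_\nu^{(r)}.
\]
The weight $\eta_r$ is positive on $I_0=[-a/2,a/2]$ by \eqref{gub:g:cutoff-positive}, and $\int\chi_r=1$. Hence
\[
 \int \bigl(P_\nu(\nabla)V_c\bigr)\,\phi\dd z=\int_\R |G|^2\eta_r\dd t\ge (3/4)^{r+1}\|G\|_{L^2(I_0)}^2 .
\]
Lemma~\ref{gub:g:local-recovery} bounds the right side below by $e^{-C(\eps^{-2}+MT_M)}\|c_{\nu\cdot}\|_2^2$.

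\emph{Integration by parts.} On the other side, $W_r\in W^{r,\infty}$ has compact support in the interior of $\Omega$, so integrating by parts $r$ times gives
\[
 \Bigl|\int V_c\,(P_\nu(-\nabla)\phi)\Bigr|\le\|V_c\|_{L^2(\Omega)}\|P_\nu(\nabla)\phi\|_{L^2}.
\]
The rotated operator $P_\nu(Q_\nu\nabla_z)$ has coefficient norm at most $d^{r/2}B_{\mathrm{sel}}\le e^{CMT_M}$ by Lemma~\ref{gub:lem:rotation}. Each $\partial^\alpha$ with $|\alpha|=r$ applied to $\phi$ is expanded by Leibniz into derivatives $G^{(l)}$, $l\le r$, times derivatives of $W_r$. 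The cutoff factors are bounded by $e^{C(r+1)\log(r+2)}$ via \eqref{gub:g:cutoff-multivariate-bound}, and the Leibniz coefficients contribute $2^r$. For the sup of $G^{(l)}=F_\nu^{(r+l)}$ on $[-a,a]$ we use \eqref{gub:g:real-derivative-bound}:
\[
 |G^{(l)}|\le e\sqrt K\,(2r)!\,\eps^{-2r}\|c_{\nu\cdot}\|_2\le e^{CMT_M}\|c_{\nu\cdot}\|_2 ,
\]
since $r\log r+r\log(1/\eps)=O(MT_M)$. Altogether
\[
 \|P_\nu(\nabla)\phi\|_{L^2}\le e^{CMT_M}\|c_{\nu\cdot}\|_2 .
\]

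\emph{Combining.} Dividing the two estimates by $\|c_{\nu\cdot}\|_2$ gives
\[
 \|V_c\|_{L^2(\Omega)}\ge e^{-C(\eps^{-2}+MT_M)}\|c_{\nu\cdot}\|_2\ge J^{-1/2}e^{-C(\eps^{-2}+MT_M)}\|c\|_2 .
\]
The factor $J^{-1/2}$ is polynomial in $M$ and is absorbed. Finally $\eps^{-2}=M^2/T_M$ dominates $MT_M$, which gives the second inequality. The main obstacle is the bookkeeping in the integration-by-parts bound: every factor there must be $e^{O(MT_M)}$, which is smaller than the $e^{-M^2/T_M}$ recovery loss, and the sign convention $P_\nu(-\nabla)=P_\nu(\nabla)$ for even $r$ should be checked. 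The sign is harmless either way, since only absolute values are used.
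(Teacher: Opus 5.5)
Your proposal is correct and follows the paper's proof essentially step for step. The selector $P_\nu(\nabla)$ isolates $F_\nu^{(r)}$, the cutoff test function $W_rF_\nu^{(r)}$ on the fixed interior cube gives the energy lower bound through Lemma~\ref{gub:g:local-recovery}, integration by parts with the rotated selector and the Leibniz bounds costs only $e^{O(MT_M)}$, and the maximal group with the absorbed factor $J^{-1/2}$ finishes the argument, with only cosmetic differences: you place the conjugate in the test function and justify the integration by parts directly rather than by mollification.
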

\begin{proof}
\emph{Step 1: select a direction group.}
For each $1\le\nu\le J$, set $D_\nu=P_\nu(\nabla_x)$. Write
$D_\nu^*$ for its formal $L^2$ adjoint. The coefficients of $P_\nu$
are real, and the common selector degree $r$ is even,
so $D_\nu^*=(-1)^rD_\nu=D_\nu$. For any smooth univariate $f$ and
$|\alpha|=r$, the chain rule gives
\[
 \partial_x^\alpha f(\omega_\mu\cdot(x-x_0))
 =\omega_\mu^\alpha f^{(r)}(\omega_\mu\cdot(x-x_0)).
\]
Since all monomials in $P_\nu$ have the same degree, we sum the chain-rule
identities with their coefficients and use $P_\nu(\omega_\mu)=\delta_{\nu\mu}$
to obtain
\begin{equation}\label{gub:g:selection-identity}
 D_\nu V_c=F_\nu^{(r)}(\omega_\nu\cdot(x-x_0)).
\end{equation}

\emph{Step 2: construct a test function inside the fixed ball.}
Fix $a=\min\{\rho/(2\sqrt d),1/2\}$ and choose an orthogonal matrix
$Q_\nu$ whose first column is $\omega_\nu$. The coordinates
$x=x_0+Q_\nu(t,y)$ satisfy $\omega_\nu\cdot(x-x_0)=t$.
Since $a\sqrt d\le\rho/2$, the rotated cube
$x_0+Q_\nu[-a,a]^d$ lies in $\overline B_{\rho/2}(x_0)$ and has
distance at least $\rho/2$ from $\R^d\setminus\Omega$.
The coordinate change has Jacobian of absolute value one.
Define a test function $\psi_\nu:\R^d\to\C$ and set
\[
 \begin{aligned}
 \psi_\nu(x)&=W_r(t,y)F_\nu^{(r)}(t),\\
 U_\nu&=\|F_\nu^{(r)}\|_{L^2(-a/2,a/2)},\\
 c_\nu&=(c_{\nu j})_{j=1}^K\in\C^K.
 \end{aligned}
\]
The transverse normalization and \eqref{gub:g:cutoff-positive} imply
\begin{equation}\label{gub:g:energy-positive}
\begin{aligned}
 \int_\Omega (D_\nu V_c)\overline{\psi_\nu}\dd x
 &=\int_\R\eta_r(t)|F_\nu^{(r)}(t)|^2\dd t\\
 &\ge e^{-C(r+1)}U_\nu^2.
 \end{aligned}
\end{equation}

\emph{Step 3: bound all derivatives of the test function.}
For integers $0\le l\le2r$, \eqref{gub:g:real-derivative-bound} and $\eps\le1$
give
\begin{align*}
 \|F_\nu^{(l)}\|_{L^\infty(\R)}
 &\le e\sqrt K\,l!\eps^{-l}\|c_\nu\|_2\\
 &\le e\sqrt K\,(2r)!\eps^{-2r}\|c_\nu\|_2
 \le e^{CMT_M}\|c_\nu\|_2.
\end{align*}
The final inequality follows from
\[
 \log((2r)!)\le2r\log(2r+1),\qquad
 2r\log(1/\eps)\le2rT_M,
\]
and $r\le C_rM$, $K\le C_KM$.

In the real coordinates $z=(t,y)\in\R^d$, the cutoff lemma and the
smoothness of $F_\nu^{(r)}$ justify the weak Leibniz rule for
$\psi_\nu(x_0+Q_\nu z)$. For $|\alpha|\le r$, it gives
\[
 \partial_z^\alpha[\psi_\nu(x_0+Q_\nu z)]
 =\sum_{\gamma\le\alpha}\binom{\alpha}{\gamma}
   (\partial_z^\gamma W_r)
   \partial_z^{\alpha-\gamma}F_\nu^{(r)}(t).
\]
The relation $\gamma\le\alpha$ is componentwise, and the sum of
the binomial coefficients is $2^{|\alpha|}\le2^r$. Terms with
any transverse derivative on $F_\nu^{(r)}(t)$ vanish. The remaining
terms contain derivatives of $F_\nu$ of orders between $r$ and $2r$.
Lemma~\ref{gub:g:cutoff} and the support volume $(2a)^d$ give
\begin{equation}\label{gub:g:detail:test-derivatives}
\begin{aligned}
 \|\partial_z^\alpha[\psi_\nu(x_0+Q_\nu z)]\|_{L^2(\R^d)}
 &\le (2a)^{d/2}2^r
      e^{C_{d,a}(r+1)\log(r+2)}e^{CMT_M}\|c_\nu\|_2\\
 &\le e^{C'MT_M}\|c_\nu\|_2.
\end{aligned}
\end{equation}
These derivatives have fixed compact support, and hence
$\psi_\nu\in W^{r,2}(\R^d)$.

Under the rotation, $\nabla_x=Q_\nu\nabla_z$, so the symbol of $D_\nu$
becomes $P_\nu(Q_\nu\,\cdot)$. By Lemma~\ref{gub:lem:rotation}, its
coefficient sum is at most
\[
 \|P_\nu(Q_\nu\,\cdot)\|_{\mathrm{coef},1}
 \le d^{r/2}B_{\mathrm{sel}}\le e^{CMT_M}.
\]
Combining this coefficient sum with the largest derivative norm in
\eqref{gub:g:detail:test-derivatives} gives a fixed
$C_{\mathrm{test}}>0$ such that
\begin{equation}\label{gub:g:adjoint-test-bound}
 \|D_\nu^*\psi_\nu\|_{L^2(\Omega)}
 \le e^{C_{\mathrm{test}}MT_M}\|c_\nu\|_2.
\end{equation}

\emph{Step 4: transfer the selector by integration by parts.}
Mollify $\psi_\nu$ using smooth mollifiers supported in balls whose radii
tend to zero and are less than $\rho/4$. The approximants are smooth,
have compact support in $\Omega$, and converge to $\psi_\nu$ in
$W^{r,2}(\R^d)$. For each approximant, $r$ integrations by parts give
the adjoint identity without boundary terms. Since $D_\nu V_c$ is
smooth on the common compact support and $D_\nu^*$ is a finite
constant-coefficient operator of order $r$, we may pass to the limit
and obtain
\begin{equation}\label{gub:g:detail:integration-by-parts}
 \int_\Omega(D_\nu V_c)\overline{\psi_\nu}\dd x
 =\int_\Omega V_c\overline{D_\nu^*\psi_\nu}\dd x.
\end{equation}
Combining this identity with \eqref{gub:g:energy-positive} and
\eqref{gub:g:adjoint-test-bound}, and applying Cauchy-Schwarz, yields
\[
 \begin{aligned}
 e^{-C(r+1)}U_\nu^2
 &\le\left|\int_\Omega V_c\overline{D_\nu^*\psi_\nu}\dd x\right|\\
 &\le\|V_c\|_{L^2(\Omega)}e^{C_{\mathrm{test}}MT_M}\|c_\nu\|_2.
 \end{aligned}
\]

\emph{Step 5: recover a maximal group and absorb the remaining factors.}
The case $c=0$ is immediate. Otherwise, choose a group of maximal
coefficient norm, so that
$\|c_\nu\|_2\ge J^{-1/2}\|c\|_2>0$.
Lemma~\ref{gub:g:local-recovery} gives a fixed $C_{\mathrm{loc}}>0$ such that
$U_\nu\ge e^{-C_{\mathrm{loc}}(\eps^{-2}+MT_M)}\|c_\nu\|_2$. Squaring this
bound and substituting into the preceding inequality gives
\[
 e^{-C(r+1)}e^{-2C_{\mathrm{loc}}(\eps^{-2}+MT_M)}\|c_\nu\|_2^2
 \le\|V_c\|_{L^2(\Omega)}e^{C_{\mathrm{test}}MT_M}\|c_\nu\|_2.
\]
After canceling the nonzero factor $\|c_\nu\|_2$, the bound
$r=O(M)$ and maximality give
\[
 \|V_c\|_{L^2(\Omega)}
 \ge e^{-C(\eps^{-2}+MT_M)}J^{-1/2}\|c\|_2.
\]
Since $J\le C_JM^{d-1}$, the contribution of the group factor to the
exponent is $\tfrac12\log J=O_d(T_M)$ and can be absorbed. Finally,
\[
 \eps^{-2}=\frac{M^2}{T_M},\qquad
 \frac{MT_M}{M^2/T_M}=\frac{T_M^2}{M}\longrightarrow0,
\]
which proves the second inequality in \eqref{gub:g:coefficient-bound}.
\end{proof}

\section{Tanh-difference coefficient recovery}
\label{gub:d2:section}

For the tanh-difference feature family, write $\eps=\eps_{D,M}$.
The domain, direction selectors, and centered-offset bounds in
\eqref{gub:g:bias-assumption} are the same as in the Gaussian case.
In the signed-offset indexing \eqref{gub:eq:local-notation}, there are $J$
direction representatives and $K$ signed offsets per group, with
$\beta_{\nu j}=b_j-\omega_\nu\cdot x_0$.
For $c=(c_{\nu j})\in\C^{JK}$, the ridge sum is
\begin{equation}\label{gub:d2:detail:physical-network}
 V_c(x)=\sum_{\nu=1}^J\sum_{j=1}^Kc_{\nu j}
 \sigma_D\!\left(\frac{\omega_\nu\cdot(x-x_0)-\beta_{\nu j}}{\eps}\right).
\end{equation}
The signed-offset reindexing permutes the original columns, so $\|c\|_2$
is the Euclidean norm of the original coefficient vector.

The one-dimensional argument uses the rational representation of
$\sigma_D$ and the period $\ii\pi$ of its meromorphic extension in the
unit-width variable.
A common denominator for the undifferentiated sum has degree determined
by the number of offsets, independently of the derivative order.

The argument starts with a lower bound for the distance of a nonzero
linear combination from the space of constants. For an integer $r\ge1$,
a small $r$th derivative norm gives a small Taylor approximation error
through order $r-1$.
Imaginary periodicity controls the nonconstant part of that polynomial,
so the distance estimate yields a lower bound for the derivative norm.
A compactly supported test function transfers this one-dimensional bound
to the $L^2(\Omega)$ norm. The local estimate holds for every integer
$r\ge0$. The direction selectors use orders $r=O(M)$.

\subsection{Distance from constants}
The single-group estimate allows general separated offsets in the
unit-width variable $t$. Fix an integer $K\ge1$, real scaled offsets
$a_1,\ldots,a_K$, and a vector
$v=(v_j)_{j=1}^K\in\C^K$. Set
\begin{equation}\label{gub:d2:one-group}
 F(t)=\sum_{j=1}^Kv_j\sigma_D(t-a_j),\qquad I=[-1,1].
\end{equation}
The estimate applies to any such offsets and parameters $\eps,A,h>0$
satisfying
\begin{equation}\label{gub:d2:scaled-assumptions}
 0<\eps\le1,\qquad |a_j|\le A/\eps,\qquad
 |a_i-a_j|\ge h/\eps\quad(i\ne j).
\end{equation}
Here $A$ bounds the offsets $\eps a_j$, and $h$ bounds their separation
from below. The corresponding separation in the unit-width variable is
at least $h/\eps$.
Define
\begin{equation}\label{gub:d2:E0}
 E_0=K\bigl[1+\eps^{-1}+\log(2+\eps/h)\bigr].
\end{equation}
Every direction group in the midpoint dictionary satisfies
\eqref{gub:d2:scaled-assumptions} for all sufficiently large $M$, with
$\eps=\eps_{D,M}$, $a_j=\beta_{\nu j}/\eps$, $v_j=c_{\nu j}$, $A=B_0$, and
$h=h_M\ge\kappa_0/M$. The scale bound follows from
\eqref{gub:eq:bounded-envelope}, and the offset bounds follow from
\eqref{gub:eq:signed-grid} and \eqref{gub:eq:shifted-bias-input}.

\begin{lemma}
\label{gub:d2:constant-distance}
There is $C_A>0$, depending only on $A$, such that under
\eqref{gub:d2:scaled-assumptions}, for every $v\in\C^K$ and $\gamma\in\C$,
\begin{equation}\label{gub:d2:constant-distance-bound}
 \|F-\gamma\|_{L^2(I)}\ge e^{-C_AE_0}\|v\|_2.
\end{equation}
\end{lemma}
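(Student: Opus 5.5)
The plan is to exploit the fact that $\sigma_D$ is a rational function of $w=e^{2t}$. From $\tanh s=1-2/(e^{2s}+1)$ we get
\[
 \sigma_D(t-a)=\frac{2}{1+we^{-2a-2}}-\frac{2}{1+we^{-2a+2}} ,
\]
so $F-\gamma$ is a rational function $\Phi(w)$ of degree at most $2K$. Its only poles are the simple poles $p_j^\pm=-e^{2a_j\pm2}$ on the negative real axis, and it has a constant term $-\gamma$. The interval $t\in I$ corresponds to $w\in[e^{-2},e^2]$, a fixed compact set. Since $\mathrm dw=2w\,\mathrm dt$ there, $L^2$ norms in $t$ and $w$ are comparable up to fixed factors.

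The key observation is that the partial-fraction coefficients of $\Phi$ determine $v$ independently of $\gamma$. The coefficient attached to $p_j^+$ is $2v_j$ and the one attached to $p_j^-$ is $-2v_j$. Two such poles coincide only when $a_i=a_j+2$, and then they merge into the single coefficient $2(v_j-v_i)$. Ordering the offsets decreasingly, the largest pole $-e^{2a_{\max}+2}$ carries $2v_{\max}$ alone. A triangular back-substitution then recovers every $v_j$ from the pole coefficients, with $\|v\|_2\le CK\max|\text{coefficients}|$. So it suffices to bound each pole coefficient by $e^{C_AE_0}\|\Phi\|_{L^2(e^{-2},e^2)}$.

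For this step set $Q(w)=\prod_p(1-w/p)$ and $G=Q\Phi$, a polynomial of degree at most $2K$.
\begin{enumerate}
\item On $[e^{-2},e^2]$ every factor of $Q$ lies in $[1,1+e^{2+2A/\eps+2}]$. Hence $\|G\|_{L^2}\le e^{CK(1+\eps^{-1})}\|\Phi\|_{L^2}$.
\item Lemma~\ref{gub:lem:poly}, applied on a fixed disk, combined with the elementary bound $|G(z)|\le(2|z|)^{2K}\sup_{|\zeta|\le2}|G|$ for $|z|\ge1$, bounds $|G|$ (and $|G'|$) at every pole. Since $|p|\le e^{2A/\eps+2}$, this again costs only $e^{CK(1+\eps^{-1})}$.
\item The coefficient at an isolated pole $p$ equals $G(p)/\prod_{q\ne p}(1-p/q)$. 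Each factor satisfies $|1-p/q|=|1-e^{2\Delta}|$, where $\Delta$ is the log-distance between the poles. When $\Delta\gtrsim h/\eps$, this factor is at least $c\min\{1,h/\eps\}$, producing the $K\log(2+\eps/h)$ term in $E_0$.
\end{enumerate}

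The main obstacle is that poles of opposite type, $p_j^+$ and $p_i^-$ with $a_i-a_j$ close to but not equal to $2$, can be arbitrarily close. Separation \eqref{gub:d2:scaled-assumptions} only prevents two poles of the same type from approaching within $h/\eps$. Consequently each cluster of poles at log-distance below $h/(2\eps)$ contains at most one $+$ pole and one $-$ pole. For such a pair I would rewrite
\[
 \frac{2v_j}{1-w/p}-\frac{2v_i}{1-w/p'}=\frac{2(v_j-v_i)}{1-w/p}+2v_i\Bigl[\frac1{1-w/p}-\frac1{1-w/p'}\Bigr].
\]
The bracket is a divided difference whose coefficient is recovered, Hermite-style, from $G(p)$ and $G'(p)$ divided by $(p-p')$-normalized quantities, uniformly as $p'\to p$; the exact coincidence case is then the limit of this formula. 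The cascade back-substitution is unchanged in this cluster basis. If the Hermite bookkeeping becomes unwieldy, a fallback is the $\ii\pi$-periodicity of $\sigma_D$ suggested in the text: shifting $t\mapsto t+\ii\pi/2$ sends $\sigma_D(t-a)$ to a function whose poles are displaced off the cluster. This would separate the $\pm$ families before applying the same Vandermonde-type inversion, with all losses still of order $e^{C_AE_0}$.
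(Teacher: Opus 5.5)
Your route differs from the paper's. It can be completed, but the cluster step has to be restated.

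\textbf{Comparison with the paper.} The paper also clears denominators and propagates from $\mathcal J=[e^{-2},e^2]$, but it never localizes at the poles. It reads off the Taylor coefficients of $\mathcal R-\gamma$ at $z=0$, on the pole-free disk of radius $\delta_0\ge\frac12e^{-2A/\eps-2}$. There the two $\tanh$ terms of each feature combine into $[z^n](\mathcal R-\gamma)=4(-1)^{n-1}\sinh(2n)\sum_jv_je^{-2na_j}$ for $n\ge1$. So $\gamma$ drops out, and $v$ follows from the first $K$ moments by a single inverse-Vandermonde bound on the nodes $x_j=e^{-2a_j}$. Their separation is inherited directly from that of the $a_j$. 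Because each feature contributes one node, a $+$ pole of one feature approaching a $-$ pole of another is irrelevant, and no ordering, clustering, or cascade is needed.

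Your residue argument pays the same $e^{CK(1+\eps^{-1})}$, by reaching poles of modulus up to $e^{2A/\eps+2}$ rather than working on radius $\delta_0$. It also pays the same $K\log(2+\eps/h)$ for isolated poles, so your Steps 1--3 are fine. But it must treat opposite-type clusters explicitly. For the midpoint dictionary $h/\eps=h_MM/T_M\to0$, so such near-coincidences are the typical case, not an exception.

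\textbf{The cluster step.} As written, it claims too much. Take a cluster $\{p,p'\}=\{p_j^+,p_i^-\}$, so $a_i\approx a_j+2$. Your bracket equals $w(1/p-1/p')/[(1-w/p)(1-w/p')]$ and vanishes identically when $p=p'$. Its coefficient $2v_i$ therefore cannot be recovered from the cluster uniformly in $|p-p'|$; at coincidence it is invisible.

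Only the sum $c_p+c_{p'}=2(v_j-v_i)$ is stable, and that is all the cascade needs, since $v_i$ is already known when the decreasing-offset order reaches $j$. Concretely, label the cluster so that $|p|\le|p'|$, and write $Q=(1-w/p)(1-w/p')Q_1$ and $f=G/Q_1$. The $\gamma$-term and the other poles contribute multiples of $(1-w/p)(1-w/p')$. Hence $c_p(1-w/p')+c_{p'}(1-w/p)$ is the linear interpolant of $f$ at $p,p'$ (confluent at coincidence), and
\[
 c_p+c_{p'}=f(p)-p\,f[p,p'],\qquad
 |c_p+c_{p'}|\le|f(p)|+|p|\sup_{[p,p']}|f'|.
\]
The right side is controlled by three quantities:
\begin{enumerate}
\item $\sup|G|$ on a disk of radius $2\max_q|q|$, where a Cauchy estimate handles $|p|\,|G'|$ on the segment;
\item a lower bound $|Q_1|\ge(c\min\{1,h/\eps\})^{2K}$ on $[p,p']$;
\item the bound $|p|\,|Q_1'/Q_1|\le CK\max\{1,\eps/h\}$ there.
\end{enumerate}
These involve only poles outside the cluster, are uniform in $|p-p'|$, and cost at most $e^{C_AE_0}$. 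With this replacement your argument closes.

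\textbf{The fallback.} Drop it. The shift $t\mapsto t+\ii\pi/2$ replaces $w$ by $-w$, which reflects both pole families together and leaves their relative positions unchanged, so it separates nothing.
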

\begin{proof}
\emph{Step 1: form a normalized denominator.}
Put $u_j=e^{2a_j-2}>0$ and $w_j=e^{2a_j+2}>0$.
The exponential formula for $\tanh$ expresses the sum as a rational
function of the complex variable $z$:
\begin{equation}\label{gub:d2:rational-form}
 F(t)=\mathcal R(e^{2t}),\qquad
 \mathcal R(z)=\sum_{j=1}^Kv_j\left(\frac{z-u_j}{z+u_j}
                          -\frac{z-w_j}{z+w_j}\right).
\end{equation}
Define the common denominator with normalization $Q_0(1)=1$:
\begin{equation}\label{gub:d2:simple-denominator}
 Q_0(z)=\prod_{j=1}^K\frac{z+u_j}{1+u_j}\frac{z+w_j}{1+w_j}.
\end{equation}
Retain repeated factors in the product and define
$P_\gamma(z)=Q_0(z)(\mathcal R(z)-\gamma)$, a polynomial of degree at most
$2K$ even when distinct summands share a pole. For each fraction in
\eqref{gub:d2:rational-form}, its denominator cancels one factor of
$Q_0$, leaving $2K-1$ linear factors multiplied by the linear numerator
and a constant normalization factor. The term $-\gamma Q_0$ has the
same degree bound.
On $\mathcal J=[e^{-2},e^2]$, each factor $(z+\lambda)/(1+\lambda)$,
with $\lambda=u_j$ or $w_j$, lies between $1$ and $z$. Consequently,
\begin{equation}\label{gub:d2:denominator-real}
 e^{-4K}\le Q_0(z)\le e^{4K}\qquad(z\in\mathcal J).
\end{equation}
Writing $W=\|F-\gamma\|_{L^2(I)}$, the substitution $z=e^{2t}$ gives
\begin{equation}
\begin{aligned}
 \|P_\gamma\|_{L^2(\mathcal J)}^2
 &=\int_{-1}^1|Q_0(e^{2t})|^2|F(t)-\gamma|^2\,2e^{2t}\dd t\\
 &\le 2e^2e^{8K}W^2.
\end{aligned}
 \label{gub:d2:detail:numerator-jacobian}
\end{equation}
Since the interval $\mathcal J$ and the disk $|z|\le1$ are fixed,
Lemma~\ref{gub:lem:poly} for degree $2K$ gives
\begin{equation}\label{gub:d2:numerator-bound}
 \sup_{|z|\le1}|P_\gamma(z)|\le e^{CK}W.
\end{equation}

\emph{Step 2: bound the Taylor coefficients near zero.}
Let
\begin{equation}\label{gub:d2:detail:small-radius}
\begin{aligned}
 \delta_0&=\tfrac12\min\{1,u_1,w_1,\ldots,u_K,w_K\},\\
 \tfrac12e^{-2A/\eps-2}&\le\delta_0\le\tfrac12.
\end{aligned}
\end{equation}
Every $\lambda$ occurring in $Q_0$ satisfies $\lambda\ge2\delta_0$.
Thus $|z+\lambda|\ge\lambda/2$ on $|z|\le\delta_0$, and
\begin{equation}\label{gub:d2:denominator-small}
\begin{aligned}
 \sup_{|z|\le\delta_0}|Q_0(z)|^{-1}
 &\le\prod_j2(1+u_j^{-1})\,2(1+w_j^{-1})\\
 &\le e^{C_AK(1+\eps^{-1})}.
\end{aligned}
\end{equation}
The last estimate uses $u_j^{-1},w_j^{-1}\le e^{2A/\eps+2}$
for all $2K$ factors. Every possible pole lies at least
$2\delta_0$ from the origin, so $\mathcal R-\gamma$ is analytic in a
neighborhood of the closed disk. Denote by $[z^n](\mathcal R-\gamma)$
the coefficient of $z^n$ in its Taylor expansion at zero.
Cauchy's coefficient formula
\cite[Chapter~4, Section~2.3]{Ahlfors1979},
\eqref{gub:d2:numerator-bound}, and
\eqref{gub:d2:denominator-small} give, for integers $1\le n\le K$,
\begin{equation}\label{gub:d2:positive-taylor-coefficients}
\begin{aligned}
 |[z^n](\mathcal R-\gamma)|
 &\le\delta_0^{-n}e^{C_AK(1+\eps^{-1})}W\\
 &\le e^{C_A'K(1+\eps^{-1})}W.
\end{aligned}
\end{equation}
The last inequality uses $\log(\delta_0^{-1})\le C_A(1+\eps^{-1})$
and $n\le K$.

\emph{Step 3: recover the moments of orders $1,\ldots,K$.}
Set $x_j=e^{-2a_j}>0$ and
$S_n=\sum_{j=1}^Kv_jx_j^n$ for integers $n\ge1$.
For $\lambda>0$ and $|z|<\lambda$, the geometric series gives
\begin{equation}\label{gub:d2:detail:geometric-expansion}
 \frac{z-\lambda}{z+\lambda}
 =-1+2\sum_{n=1}^{\infty}(-1)^{n-1}z^n\lambda^{-n}.
\end{equation}
Since $u_j^{-n}-w_j^{-n}=2e^{-2na_j}\sinh(2n)$, subtracting the two
series gives
\begin{equation}\label{gub:d2:moment-identity}
 [z^n](\mathcal R-\gamma)
 =4(-1)^{n-1}\sinh(2n)S_n,\qquad n\ge1.
\end{equation}
The constant $\gamma$ does not affect these positive-order coefficients.
Since $4\sinh(2n)>1$, \eqref{gub:d2:positive-taylor-coefficients} gives
\[
 |S_n|\le e^{C_A'K(1+\eps^{-1})}W,\qquad1\le n\le K.
\]

\emph{Step 4: invert the finite moment system.}
Set $\eta=\min\{1,h/\eps\}$. The nodes satisfy
\begin{equation}\label{gub:d2:detail:moment-node-separation}
\begin{aligned}
 e^{-2A/\eps}&\le x_j\le e^{2A/\eps},\\
 |x_i-x_j|&\ge2e^{-2A/\eps}\eta\qquad(i\ne j).
\end{aligned}
\end{equation}
The derivative of $a\mapsto e^{-2a}$ has absolute value at least
$2e^{-2A/\eps}$ on the allowed real interval. The mean value
theorem and $|a_i-a_j|\ge h/\eps\ge\eta$ give the separation bound.
For $1\le j\le K$, define the Lagrange polynomial in the variable $X$ by
$\ell_j(X)=\prod_{i\ne j}(X-x_i)/(x_j-x_i)
=\sum_{n=0}^{K-1}d_{jn}X^n$, so $\ell_j(x_i)=\delta_{ji}$.
Multiply the interpolation identities by $v_ix_i$ and sum over
$i$ to obtain
\begin{equation}\label{gub:d2:moment-inversion}
 v_jx_j=\sum_{n=0}^{K-1}d_{jn}S_{n+1}.
\end{equation}
Gautschi's inverse Vandermonde bound
\cite{Gautschi1962Vandermonde} gives
\begin{equation}
\begin{aligned}
 \sum_{n=0}^{K-1}|d_{jn}|
 &\le\prod_{i\ne j}\frac{1+x_i}{|x_j-x_i|}\\
 &\le\left(\frac{1+e^{2A/\eps}}{2e^{-2A/\eps}\eta}\right)^{K-1}\\
 &\le e^{C_AK(1+\eps^{-1})+K\log(2+\eps/h)}.
\end{aligned}
 \label{gub:d2:moment-inverse-bound}
\end{equation}
Here $\log(\eta^{-1})\le\log(2+\eps/h)$, and the logarithm of the
remaining part of each factor is at most $C_A(1+\eps^{-1})$. Equation
\eqref{gub:d2:moment-inversion} and the first $K$ moment bounds then give,
for every $j$,
\begin{equation}\label{gub:d2:detail:individual-coefficient}
\begin{aligned}
 |v_j|&\le e^{2A/\eps}
 \left(\sum_{n=0}^{K-1}|d_{jn}|\right)
 \max_{1\le n\le K}|S_n|\\
 &\le e^{C_AE_0}W.
\end{aligned}
\end{equation}
The Euclidean norm introduces a factor $\sqrt K\le e^{CK}$,
which can be absorbed into $C_A$. When $K=1$, the Lagrange product is
empty and equals one, so the argument still applies.
\end{proof}

\subsection{Entire-function propagation}
The imaginary-period argument needs bounds on both $I$ and
$I+\ii\pi$. Both sets lie in the disk $|t|\le L$ whenever
$L>\sqrt{1+\pi^2}$. The estimate below controls the Taylor remainder
after polynomial propagation.

\begin{lemma}
\label{gub:d2:entire-propagation}
Fix $L>\sqrt{1+\pi^2}$. There is $A_*\ge2$, depending only on $L$,
such that, with $R_*=8A_*L$, every entire function $H:\C\to\C$ with a
bound $B_H\ge0$ satisfying $\sup_{|t|\le R_*}|H(t)|\le B_H$ obeys,
for every integer $\ell\ge0$,
\begin{equation}\label{gub:d2:entire-propagation-bound}
 \sup_{|t|\le L}|H(t)|
 \le A_*^{\ell+1}\|H\|_{L^2(I)}+B_H4^{-\ell}.
\end{equation}
\end{lemma}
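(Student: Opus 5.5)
This is the same Taylor-plus-propagation scheme used in Step 2 of the proof of Lemma~\ref{gub:g:local-recovery}. Only the sets change: $I_0$ becomes $I=[-1,1]$, the real interval $I_*$ becomes the complex disk $|t|\le L$, and the factor $4$ becomes $8$ in the radius. First apply Lemma~\ref{gub:lem:poly} with the fixed interval $I=[-1,1]$ and the fixed bounded set $\mathcal K=\{|t|\le L\}$. This gives $A_*\ge2$, depending only on $L$, such that $\sup_{|t|\le L}|p|\le A_*^{n+1}\|p\|_{L^2(I)}$ for every complex polynomial of degree at most $n$. Then set $R_*=8A_*L$; this choice does not depend on $H$, $B_H$ or $\ell$.

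Next, let $p_\ell$ be the Taylor polynomial of $H$ at $0$ through order $\ell$. Cauchy's coefficient estimate on $|t|=R_*$ bounds the coefficient of $t^k$ by $B_HR_*^{-k}$. Hence, for $|t|\le L$,
\[
 \delta_\ell:=\sup_{|t|\le L}|H-p_\ell|
 \le B_H\sum_{k=\ell+1}^\infty (8A_*)^{-k}
 \le 2B_H(8A_*)^{-(\ell+1)} .
\]
Since $L>\sqrt{1+\pi^2}>1$, we have $I\subset\{|t|\le L\}$, so the same bound holds on $I$. The triangle inequality with $|I|=2$ gives $\|p_\ell\|_{L^2(I)}\le\|H\|_{L^2(I)}+\sqrt2\,\delta_\ell$.

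Finally, combine the pieces:
\[
 \sup_{|t|\le L}|H|\le\sup_{|t|\le L}|p_\ell|+\delta_\ell
 \le A_*^{\ell+1}\|H\|_{L^2(I)}+(\sqrt2A_*^{\ell+1}+1)\delta_\ell .
\]
The error term is then at most
\[
 2B_H\bigl(\sqrt2\,8^{-(\ell+1)}+(8A_*)^{-(\ell+1)}\bigr)
 \le 2B_H\cdot 8^{-(\ell+1)}(\sqrt2+1/2)\le B_H4^{-\ell}.
\]
The last step holds because $2(\sqrt2+1/2)/8<1$ and $8^{-(\ell+1)}\le 4^{-\ell}$.

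The only point needing care is this constant bookkeeping. The factor $8$ in $R_*$, rather than the factor $4$ used in the Gaussian case, compensates for the interval length $|I|=2$ and for $A_*\ge2$, so that the remainder coefficient is at most one with no condition on $\ell$. The case $B_H=0$ is trivial, since then $H\equiv0$ on the disk.
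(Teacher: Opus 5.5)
Your proof is correct and follows the paper's argument step for step. You fix $A_*$ from Lemma~\ref{gub:lem:poly} with $\mathcal K=\{|t|\le L\}$, truncate the Taylor series using Cauchy estimates on $|t|=R_*$ to get $\delta_\ell\le 2B_H(8A_*)^{-\ell-1}$, and absorb $(\sqrt2A_*^{\ell+1}+1)\delta_\ell$ into $B_H4^{-\ell}$. One small slip is in your final justification, which should read $2(\sqrt2+\tfrac12)8^{-(\ell+1)}=\tfrac{2\sqrt2+1}{8}\,8^{-\ell}\le 8^{-\ell}\le 4^{-\ell}$; the inequality $8^{-(\ell+1)}\le 4^{-\ell}$ you cited does not combine with your other fact to give the bound.
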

\begin{proof}
Lemma~\ref{gub:lem:poly}, applied to the fixed interval $I$ and the disk
$|t|\le L$, gives $A_*\ge2$ such that, for every integer $\ell\ge0$
and every complex polynomial $p$ of degree at most $\ell$,
\begin{equation}\label{gub:d2:detail:fixed-polynomial-propagation}
 \sup_{|t|\le L}|p(t)|\le A_*^{\ell+1}\|p\|_{L^2(I)}.
\end{equation}
Fix $R_*=8A_*L$. For any integer $\ell\ge0$, let $p_\ell$
be the Taylor polynomial of $H$ at zero through order $\ell$.
Cauchy's coefficient bounds on $|t|=R_*$ give
\begin{equation}
\begin{aligned}
 \delta_\ell:=\sup_{|t|\le L}|H(t)-p_\ell(t)|
 &\le B_H\sum_{n=\ell+1}^{\infty}(L/R_*)^n\\
 &\le2B_H(8A_*)^{-\ell-1}.
\end{aligned}
 \label{gub:d2:detail:entire-remainder}
\end{equation}
On the real interval, $\|p_\ell\|_{L^2(I)}\le
\|H\|_{L^2(I)}+\sqrt2\delta_\ell$. Applying polynomial propagation
and the triangle inequality gives
\[
 \sup_{|t|\le L}|H(t)|
 \le A_*^{\ell+1}\|H\|_{L^2(I)}
 +(\sqrt2A_*^{\ell+1}+1)\delta_\ell.
\]
The last term is at most $B_H4^{-\ell}$ because
\[
 2(\sqrt2+A_*^{-\ell-1})8^{-\ell-1}\le4^{-\ell}
 \qquad(\ell\ge0,\ A_*\ge2).
\]
\end{proof}

\subsection{Polynomial differences under an imaginary shift}
A polynomial with period $\ii\pi$ is constant. The estimate below
bounds the nonconstant part of a polynomial by its difference under
this shift, with explicit dependence on the degree.

\begin{lemma}
\label{gub:d2:polynomial-difference}
For an integer $n\ge0$ and a complex polynomial $p$ of degree at most $n$, let
$q_p(t)=p(t+\ii\pi)-p(t)$. There is an absolute constant $C>0$,
independent of $n$ and $p$, such that
\begin{equation}\label{gub:d2:difference-bound}
 \|p-p(0)\|_{L^\infty(I)}
 \le e^{C(n+1)\log(n+2)}\|q_p\|_{L^\infty(I)}.
\end{equation}
\end{lemma}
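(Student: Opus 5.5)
The map $p\mapsto q_p$ is linear on the space $\mathcal P_n$ of polynomials of degree at most $n$, and its kernel is exactly the constants. Indeed, if $q_p=0$ with $\deg p=k\ge1$, the leading coefficient of $q_p$ is $k\ii\pi$ times that of $p$, which is a contradiction. So the estimate is a quantitative injectivity of the forward difference on $\mathcal P_n/\C$. We will invert the difference explicitly in the monomial basis and then pass between coefficient norms and sup norms on $I$ with losses of order $e^{C(n+1)\log(n+2)}$.

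\emph{Step 1: from $\|q_p\|_{L^\infty(I)}$ to the coefficients of $q_p$.} Write $q_p(t)=\sum_{k=0}^{n-1}\beta_kt^k$, which has degree at most $n-1$. Expand $q_p$ in Legendre polynomials on $I$. The orthonormal coefficients are bounded by $\sqrt2\|q_p\|_{L^\infty(I)}$, and the monomial coefficients of the normalized Legendre polynomial of degree $k$ are at most $e^{Ck}$, for instance from the explicit formula with binomial sums bounded by $4^k\sqrt{k}$. Summing gives $\sum_k|\beta_k|\le e^{C(n+1)}\|q_p\|_{L^\infty(I)}$. Alternatively, one can take Lagrange interpolation at $n$ Chebyshev nodes and bound the inverse Vandermonde matrix, which is also $e^{O(n)}$.

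\emph{Step 2: invert the difference operator.} Write $p(t)=\sum_{k=0}^n\alpha_kt^k$. Then
\[
 q_p(t)=\sum_{k=1}^n\alpha_k\sum_{j=0}^{k-1}\binom kj(\ii\pi)^{k-j}t^j,
\]
so $\beta_j=\sum_{k>j}\binom kj(\ii\pi)^{k-j}\alpha_k$. This system is upper triangular, and its diagonal entries are $\beta_{k-1}=k\ii\pi\alpha_k+\sum_{k'>k}(\cdots)$. Back substitution from $k=n$ downward gives
\[
 |\alpha_k|\le\frac1{k\pi}\Bigl(|\beta_{k-1}|+\sum_{k'>k}\binom{k'}{k-1}\pi^{k'-k+1}|\alpha_{k'}|\Bigr).
\]
The binomial coefficients are at most $2^n$ and the powers of $\pi$ are at most $\pi^{n}$, so induction yields $\max_{k\ge1}|\alpha_k|\le (C\cdot 2\pi)^{n}\,n!\,\cdots$. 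To keep this bound at the stated rate, we argue differently. Using the Bernoulli-type inversion, the antidifference solves $\Delta_{\ii\pi}p=q$ with $p=\sum_j\beta_j\,\mathcal B_{j+1}(t)$, where $\mathcal B_{j+1}(t)=(\ii\pi)^{j}B_{j+1}(t/(\ii\pi))/(j+1)$ is a scaled Bernoulli polynomial satisfying $\mathcal B_{j+1}(t+\ii\pi)-\mathcal B_{j+1}(t)=t^j$. Its coefficients are $(\ii\pi)^{j-m}\binom{j+1}{m}B_{j+1-m}/(j+1)$. The bound $|B_l|\le C\,l!(2\pi)^{-l}$ shows that the coefficient sum of $\mathcal B_{j+1}$ on $I$ is at most $e^{C(j+1)\log(j+2)}$; here the factorial is the source of the $\log$ factor. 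Uniqueness up to constants, which follows from the kernel computation above, identifies $p-p(0)$ with $\sum_j\beta_j(\mathcal B_{j+1}-\mathcal B_{j+1}(0))$.

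\emph{Step 3: conclude.} On $I$ we have $|t|\le1$, so
\[
 \|p-p(0)\|_{L^\infty(I)}\le\sum_j|\beta_j|\cdot2\,\|\mathcal B_{j+1}\|_{\mathrm{coef},1}\le e^{C(n+1)\log(n+2)}\|q_p\|_{L^\infty(I)},
\]
after combining with Step 1. The case $n=0$ is trivial because both sides vanish.

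The main obstacle is Step 2: showing that the inverse difference loses only a factor $e^{O(n\log n)}$ uniformly in $n$. Naive back substitution appears to compound the binomial coefficients geometrically at each level. The Bernoulli polynomial representation, together with the classical bound $|B_l|\le 2\,l!\,\zeta(l)(2\pi)^{-l}$, gives a clean bound, because the factor $(2\pi)^{-l}$ cancels against the powers of $\pi$ and leaves only factorials.
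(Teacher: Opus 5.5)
Your proof is correct and rests on the same key identity as the paper's: the Bernoulli generating function inverts the shift $e^{\ii\pi D}-1$. With $\tau=\ii\pi$ and $B_j(\cdot)$ the Bernoulli polynomial, $\tau^{-1}\mathcal B(\tau D)t^j=\tau^{j-1}B_j(t/\tau)=\mathcal B_{j+1}'(t)$. So the paper's formula $p'=\tau^{-1}\sum_k(\mathsf B_k/k!)\tau^kq_p^{(k)}$ is your representation $p-p(0)=\sum_j\beta_j(\mathcal B_{j+1}-\mathcal B_{j+1}(0))$ after differentiation.

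The two proofs differ in the norm through which the estimate passes. The paper works on $I$ and a complex neighbourhood of it. It bounds $\|q_p^{(k)}\|_{L^\infty(I)}\le k!\,C^{n+1}\|q_p\|_{L^\infty(I)}$ by polynomial propagation (Lemma~\ref{gub:lem:poly}) and Cauchy estimates, uses only $|\mathsf B_k\tau^k|/k!\le C$, and integrates $p'$; the factor $n!$ comes from the derivatives of $q_p$. You pay $e^{O(n)}$ once to pass from $\|q_p\|_{L^\infty(I)}$ to monomial coefficients, then read off explicit Bernoulli-polynomial coefficients, where the same factorial appears as $\binom{j+1}{m}|\mathsf B_{j+1-m}|$. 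This route avoids Bernstein--Walsh and gives the explicit bound $\|\mathcal B_{j+1}\|_{\mathrm{coef},1}\le C\,j!$.

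Two cosmetic corrections:
\begin{enumerate}
\item The inequality $|\mathsf B_l|\le2\,l!\,\zeta(l)(2\pi)^{-l}$ holds only for even $l\ge2$; at $l=0$ it reads $1\le-1$. The weaker bound $|\mathsf B_l|\le C\,l!\,\pi^{-l}$ for all $l\ge0$, from Cauchy's estimate on $|w|=\pi$ as in the paper, already suffices: it makes the coefficient of $t^m$ in $\mathcal B_{j+1}$ at most $C\,j!/m!$.
\item The unfinished back-substitution estimate in Step~2 should be removed.
\end{enumerate}
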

\begin{proof}
The assertion is immediate for $n=0$. For $n\ge1$, set $\tau=\ii\pi$.
Define the Bernoulli numbers $\mathsf B_k$ by the generating function
\cite{OlverEtAl2010NIST} in the auxiliary variable $w$:
\[
 \mathcal B(w)=\frac{w}{e^w-1}
 =\sum_{k=0}^{\infty}\frac{\mathsf B_k}{k!}w^k,\qquad |w|<2\pi,
\]
with the removable value $\mathcal B(0)=1$. Cauchy's coefficient bound
on the fixed circle $|w|=\pi$ gives
\begin{equation}\label{gub:d2:detail:bernoulli-coefficients}
 \frac{|\mathsf B_k|}{k!}\le C\pi^{-k},\qquad
 \frac{|\mathsf B_k\tau^k|}{k!}\le C\quad(k\ge0).
\end{equation}
Let $D=\dd/\dd t$ on the vector space of polynomials of degree at most
$n$. Since $D^{n+1}=0$ on this space, the power-series expressions
$e^{\tau D}$ and $\mathcal B(\tau D)$ reduce to finite sums.
Taylor's formula and the Bernoulli generating function give
\[
 q_p=(e^{\tau D}-1)p,
 \qquad \mathcal B(\tau D)(e^{\tau D}-1)p=\tau Dp.
\]
Substituting $\tau D$ into
$\mathcal B(w)(e^w-1)=w$ gives the second identity. Since $\deg q_p\le n-1$,
this identity reduces to
\begin{equation}\label{gub:d2:finite-difference-inverse}
 p'=\tau^{-1}\sum_{k=0}^{n-1}\frac{\mathsf B_k}{k!}\tau^kq_p^{(k)}.
\end{equation}
By Lemma~\ref{gub:lem:poly} and
$\|q_p\|_{L^2(I)}\le\sqrt2\|q_p\|_{L^\infty(I)}$, polynomial propagation
extends the bound for $q_p$ from $I$ to the disk $|t|\le2$ with a multiplicative factor
$C^{n+1}$. Each unit circle centered at $t\in I$ lies in this disk, so
Cauchy's derivative formula gives
\begin{equation}\label{gub:d2:detail:difference-derivatives}
 \|q_p^{(k)}\|_{L^\infty(I)}
 \le k!C^{n+1}\|q_p\|_{L^\infty(I)}
 \qquad(0\le k\le n-1).
\end{equation}
This bound, \eqref{gub:d2:detail:bernoulli-coefficients}, and
\eqref{gub:d2:finite-difference-inverse} give
\[
 \|p'\|_{L^\infty(I)}
 \le C^{n+2}\left(\sum_{k=0}^{n-1}k!\right)
                  \|q_p\|_{L^\infty(I)}.
\]
Since $\sum_{k=0}^{n-1}k!\le(n+1)n!$, the factor
$C^{n+2}\sum_{k=0}^{n-1}k!$ is at most
$e^{C'(n+1)\log(n+2)}$. Integrating $p'$ from zero to $t\in I$
over an interval of length at most one yields the asserted bound.
\end{proof}

\subsection{Local derivative lower bounds}
The lower bound for the distance of $F$ to constants in
Lemma~\ref{gub:d2:constant-distance} yields lower bounds for its
derivative norms. If a derivative of positive order is small in
$L^2(I)$, Taylor's formula makes $F$ close to a polynomial on $I$.
After the poles are cleared, Lemma~\ref{gub:d2:entire-propagation} can
propagate this approximation to $I+\ii\pi$. The imaginary period of
$F$ and Lemma~\ref{gub:d2:polynomial-difference} then show that the
polynomial is close to a constant. The distance lower bound therefore
limits how small the derivative norm can be.

\begin{theorem}
\label{gub:d2:local-recovery}
There is $C_A>0$, depending only on $A$, such that every $F$ in
\eqref{gub:d2:one-group} whose offsets satisfy
\eqref{gub:d2:scaled-assumptions} obeys, for every integer $r\ge0$,
\begin{equation}\label{gub:d2:local-recovery-bound}
 \|F^{(r)}\|_{L^2(I)}
 \ge e^{-C_A[E_0+r\log(r+2)]}\|v\|_2.
\end{equation}
\end{theorem}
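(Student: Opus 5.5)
For $r=0$ the bound is Lemma~\ref{gub:d2:constant-distance} with $\gamma=0$, since $E_0\ge0$. For $r\ge1$ the plan is to show that a small $W_r:=\|F^{(r)}\|_{L^2(I)}$ forces $F$ to be close on $I$ to a constant, which Lemma~\ref{gub:d2:constant-distance} forbids. Normalize $\|v\|_2=1$.

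\emph{Step 1: Taylor polynomial and its coefficients.} Let $p$ be the Taylor polynomial of $F$ at $0$ through order $r-1$. The integral form of the remainder and Cauchy--Schwarz give
\[
 \|F-p\|_{L^\infty(I)}\le\sqrt2\,W_r .
\]
The poles of $t\mapsto\sigma_D(t-a_j)$ lie on the lines $\operatorname{Im}t=\pi/2+\pi\mathbb Z$. Cauchy's estimate on circles of radius $1$ centered at $0$ therefore gives $|F^{(k)}(0)|\le Ck!\sqrt K$. Consequently, for every fixed $R>0$,
\[
 \sup_{|t|\le R}|p(t)|\le e^{C_R(K+r\log(r+2))}.
\]

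\emph{Step 2: clearing the poles.} Take $Q_0$ from \eqref{gub:d2:simple-denominator} and set
\[
 H(t)=Q_0(e^{2t})\bigl(F(t)-p(t)\bigr).
\]
By \eqref{gub:d2:rational-form}, $Q_0(e^{2t})F(t)=P_0(e^{2t})$ with $P_0$ a polynomial, so $H$ is entire. The factor $Q_0(e^{2t})$ has period $\ii\pi$, and so does $F$. Each factor $(e^{2t}+\lambda)/(1+\lambda)$ has modulus at most $e^{2|t|}+1$, so $|Q_0(e^{2t})|\le e^{CK}$ on $|t|\le R_*$. On this disk, $|Q_0(e^{2t})F(t)|=|P_0(e^{2t})|$ is bounded by $e^{CK}$, using the coefficient structure of $P_0$ from the proof of Lemma~\ref{gub:d2:constant-distance}, or alternatively a maximum-principle argument on a slightly larger disk. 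Hence
\[
 \sup_{|t|\le R_*}|H(t)|\le B_H:=e^{C(K+r\log(r+2))}.
\]
On $I$, \eqref{gub:d2:denominator-real} gives $\|H\|_{L^2(I)}\le e^{5K}W_r$. Lemma~\ref{gub:d2:entire-propagation} with $L$ slightly larger than $\sqrt{1+\pi^2}$ then yields, for every integer $\ell\ge0$,
\[
 \sup_{|t|\le L}|H(t)|\le A_*^{\ell+1}e^{5K}W_r+B_H4^{-\ell}=:\Theta_\ell .
\]

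\emph{Step 3: imaginary shift.} Periodicity gives
\[
 H(t+\ii\pi)-H(t)=-Q_0(e^{2t})\bigl(p(t+\ii\pi)-p(t)\bigr).
\]
Dividing by $Q_0(e^{2t})\ge e^{-4K}$ on $I$ gives $\|q_p\|_{L^\infty(I)}\le2e^{4K}\Theta_\ell$. Lemma~\ref{gub:d2:polynomial-difference} with $n=r-1$ then gives
\[
 \|p-p(0)\|_{L^\infty(I)}\le e^{C(r+1)\log(r+2)+4K}\,2\Theta_\ell .
\]
Combining this with Step 1 and Lemma~\ref{gub:d2:constant-distance} for $\gamma=p(0)$ yields
\[
 e^{-C_AE_0}\le\|F-p(0)\|_{L^2(I)}\le\sqrt2\Bigl(\sqrt2W_r+e^{C(K+r\log(r+2))}\Theta_\ell\Bigr).
\]

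\emph{Step 4: choice of $\ell$.} The main obstacle is this balancing. The remainder term $e^{C(K+r\log(r+2))}B_H4^{-\ell}$ must fall below $\tfrac14e^{-C_AE_0}$. I would take
\[
 \ell\asymp C\bigl[E_0+r\log(r+2)\bigr],
\]
which uses $K\le E_0$. The remaining term $e^{C(K+r\log(r+2))}A_*^{\ell+1}W_r$ must then carry at least $\tfrac14e^{-C_AE_0}$, or else $W_r$ itself does. This gives
\[
 W_r\ge e^{-C_A'[E_0+r\log(r+2)]},
\]
since $\log A_*$ is fixed and $\ell$ is linear in $E_0+r\log(r+2)$. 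Rescaling by $\|v\|_2$ proves \eqref{gub:d2:local-recovery-bound}.

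The one point needing care is the bound $B_H$. It must not pick up a factor $e^{C/\eps}$ beyond what is already contained in $E_0$. Even if the bound on $P_0(e^{2t})$ does pick up $e^{CK/\eps}$, that factor is absorbed into $E_0$, because $E_0\ge K\eps^{-1}$.
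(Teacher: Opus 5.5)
Your proposal is correct and follows essentially the same route as the paper: the Taylor polynomial through order $r-1$, the entire function $Q_0(e^{2t})(F-p)$, propagation by Lemma~\ref{gub:d2:entire-propagation}, the $\ii\pi$-periodicity combined with Lemma~\ref{gub:d2:polynomial-difference}, Lemma~\ref{gub:d2:constant-distance} with $\gamma=p(0)$, and a truncation order $\ell$ linear in $E_0+r\log(r+2)$. The one minor difference is how you bound $p$ on the large disk: you use Cauchy estimates for $F^{(k)}(0)$ in the pole-free strip $|\operatorname{Im}t|\le1$, which does not require $U\le1$. The paper instead bounds $p$ on $I$ using $|\sigma_D|\le2$ and $U\le1$, and then applies Lemma~\ref{gub:lem:poly}; this is why it first reduces to the case $U\le1$.
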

\begin{proof}
Let $C_{\mathrm{dist}}>0$, depending only on $A$, be a constant from
Lemma~\ref{gub:d2:constant-distance}. The assertion is immediate for $v=0$.
Otherwise, normalize $\|v\|_2=1$. The case
$r=0$ follows from Lemma~\ref{gub:d2:constant-distance} with $\gamma=0$.
Suppose $r\ge1$. Fix $L=4$ and choose $A_*,R_*$ as in
Lemma~\ref{gub:d2:entire-propagation}. Both are absolute constants.
Set $U=\|F^{(r)}\|_{L^2(I)}$. For $U>1$, the asserted lower bound
already holds, so it suffices to consider
$U\le1$.

\emph{Control of the Taylor polynomial.}
Let $p$ be the Taylor polynomial of $F$ at zero through order $r-1$.
The integral remainder on the real interval gives
\[
 F(t)-p(t)=\frac1{(r-1)!}\int_0^t(t-u)^{r-1}F^{(r)}(u)\dd u.
\]
For either sign of $t$, Cauchy-Schwarz bounds the integral by $U$
times the square root of
$\int_0^{|t|}s^{2r-2}\dd s=|t|^{2r-1}/(2r-1)$. Thus
\begin{equation}\label{gub:d2:primitive-smallness}
 |F(t)-p(t)|
 \le\frac{|t|^{r-1/2}}{(r-1)!\sqrt{2r-1}}U\le U\quad(t\in I).
\end{equation}
Since $|\sigma_D|\le2$ on the real axis,
$\|p\|_{L^\infty(I)}\le2\sqrt K+1\le3\sqrt K$.
Applying Lemma~\ref{gub:lem:poly} on the fixed disk of radius $R_*$ from
Lemma~\ref{gub:d2:entire-propagation} gives
\begin{equation}\label{gub:d2:unknown-polynomial-complex}
 \sup_{|t|\le R_*}|p(t)|\le C_{R_*}^{r+1}\sqrt K.
\end{equation}
Here $C_{R_*}\ge2$ depends only on $R_*$ and may be enlarged below.

\emph{Removal of all poles and growth on a fixed disk.}
Extend $F$ meromorphically by the same finite sum and define
\begin{equation}\label{gub:d2:entire-function}
 H(t)=Q_0(e^{2t})[F(t)-p(t)].
\end{equation}
Since $Q_0(z)\mathcal R(z)$ is a polynomial, both
$Q_0(e^{2t})\mathcal R(e^{2t})$ and $Q_0(e^{2t})p(t)$ are entire.
Thus $H$ extends across all poles to an entire function.
Enlarge $C_{R_*}$ if necessary so that, for $|t|\le R_*$ and $z=e^{2t}$,
$|z\pm\lambda|/(1+\lambda)\le C_{R_*}$ for every $\lambda>0$.
In \eqref{gub:d2:rational-form}, cancel the denominator factor of
each summand before applying this bound. For each fraction, cancellation
leaves $2K$ normalized linear factors, including its numerator. The
bound also applies to repeated factors. Since
$\sum_j|v_j|\le\sqrt K$, we obtain
\begin{equation}\label{gub:d2:detail:cancelled-growth}
\begin{aligned}
 |Q_0(e^{2t})F(t)|&\le2\sqrt K C_{R_*}^{2K},\\
 |Q_0(e^{2t})|&\le C_{R_*}^{2K},
 \end{aligned}
\end{equation}
These bounds also hold at poles after the singularities are removed.
Together with \eqref{gub:d2:unknown-polynomial-complex}, they give a constant
$C_*>0$, depending only on $R_*$, such that
\begin{equation}\label{gub:d2:entire-growth}
 \sup_{|t|\le R_*}|H(t)|\le B_H:=e^{C_*(K+r)}.
\end{equation}

\emph{Propagation to the translated interval.}
On $I$, equations \eqref{gub:d2:denominator-real} and
\eqref{gub:d2:primitive-smallness} give
$\|H\|_{L^2(I)}\le\sqrt2e^{4K}U$. Lemma~\ref{gub:d2:entire-propagation}
then gives, for every integer $\ell\ge0$,
\begin{equation}\label{gub:d2:propagated-smallness}
 V_H:=\sup_{|t|\le L}|H(t)|
 \le\sqrt2e^{4K}A_*^{\ell+1}U+B_H4^{-\ell}.
\end{equation}

\emph{Reduction of the polynomial to a constant.}
For real $t$, both $t$ and $t+\ii\pi$ are regular points of $F$, since
the corresponding value of $e^{2t}$ is positive and all poles in the
exponential variable are negative. Imaginary periodicity gives
$F(t+\ii\pi)=F(t)$. Together with $e^{2(t+\ii\pi)}=e^{2t}$, this gives
\begin{equation}\label{gub:d2:detail:periodic-difference}
 H(t+\ii\pi)-H(t)
 =-Q_0(e^{2t})[p(t+\ii\pi)-p(t)].
\end{equation}
For $t\in I$, both arguments on the left lie in the disk of radius $L$.
The lower bound in
\eqref{gub:d2:denominator-real} therefore yields
\begin{equation}\label{gub:d2:detail:polynomial-difference-bound}
 \|p(\cdot+\ii\pi)-p\|_{L^\infty(I)}\le2e^{4K}V_H.
\end{equation}
Set $\gamma=p(0)$. The triangle inequality yields the bound
$\|F-\gamma\|_{L^2(I)}\le\sqrt2[U+\|p-\gamma\|_{L^\infty(I)}]$.
Lemma~\ref{gub:d2:polynomial-difference}, applied with degree at most
$r-1$, then gives
\begin{equation}\label{gub:d2:distance-from-periodicity}
 \|F-\gamma\|_{L^2(I)}
 \le CU+Ce^{4K+Cr\log(r+2)}V_H.
\end{equation}

\emph{Choice of truncation and absorption of the amplified error.}
Set $D_*=E_0+r\log(r+2)$, so that
$D_*\ge K\ge1$ and
$K+r\le CD_*$. Lemma~\ref{gub:d2:constant-distance}, applied to
$\gamma=p(0)$, together with
\eqref{gub:d2:propagated-smallness} and
\eqref{gub:d2:distance-from-periodicity}, gives
\begin{equation}\label{gub:d2:absorption-master}
\begin{aligned}
 e^{-C_{\mathrm{dist}}E_0}
 &\le CU\\
 &\quad{}+Ce^{4K+Cr\log(r+2)}
 \bigl[\sqrt2e^{4K}A_*^{\ell+1}U+B_H4^{-\ell}\bigr].
 \end{aligned}
\end{equation}
By \eqref{gub:d2:entire-growth}, the amplified remainder satisfies,
for some absolute constant $C_1>0$,
\begin{equation}\label{gub:d2:detail:amplified-remainder}
 Ce^{4K+Cr\log(r+2)}B_H4^{-\ell}
 \le e^{C_1D_*-\ell\log4}.
\end{equation}
The fixed prefactor can be absorbed into $C_1$ using $D_*\ge1$. Choose
$C_2>0$, depending only on $A$, such that
\begin{equation}\label{gub:d2:detail:truncation-choice}
 \ell=\lceil C_2D_*\rceil,\qquad
 C_2\log4\ge C_1+C_{\mathrm{dist}}+\log2.
\end{equation}
Since $E_0\le D_*$ and $D_*\ge1$, the remainder in
\eqref{gub:d2:detail:amplified-remainder} is then at most
$\tfrac12e^{-C_{\mathrm{dist}}E_0}$.
Absorbing the remainder into the left-hand side of
\eqref{gub:d2:absorption-master} gives
\begin{equation}
\begin{aligned}
 \tfrac12e^{-C_{\mathrm{dist}}E_0}
 &\le\left[C+C\sqrt2e^{8K+Cr\log(r+2)}A_*^{\ell+1}\right]U\\
 &\le e^{C_3D_*}U.
\end{aligned}
 \label{gub:d2:detail:absorption-finish}
\end{equation}
The second inequality follows from $\ell+1\le(C_2+2)D_*$ and the fixed value
of $A_*$. Thus $C_3$ depends only on $A$. Absorbing the factor $1/2$
gives $U\ge e^{-C_4D_*}$ for a constant $C_4$ depending only on $A$.
Choosing $C_A\ge\max\{C_{\mathrm{dist}},C_4\}$ covers both $r=0$ and
$r\ge1$. Rescaling $v$ completes the proof.
\end{proof}

\subsection{Coefficient recovery on the domain}
To transfer the one-dimensional recovery estimate to
$L^2(\Omega)$, we rescale the domain and use a direction selector to
isolate one group. A compactly supported test function then relates
the derivative norm of this group to the norm of the full sum.
The derivative bounds below control the derivatives of that test function.

\begin{lemma}
\label{gub:d2:native-derivatives}
For $F$ in \eqref{gub:d2:one-group} and every integer $l\ge0$,
\begin{equation}\label{gub:d2:native-derivative-bound}
\begin{aligned}
 \|\sigma_D^{(l)}\|_{L^\infty(\R)}&\le2l!(4/\pi)^l,\\
 \|F^{(l)}\|_{L^\infty(\R)}
 &\le2\sqrt K\,l!(4/\pi)^l\|v\|_2.
 \end{aligned}
\end{equation}
\end{lemma}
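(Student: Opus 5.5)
The plan is to bound the derivatives of $\sigma_D$ through its meromorphic extension and Cauchy's derivative estimate on a circle of radius $\pi/4$. Write $\sigma_D(t)=\tanh(t+1)-\tanh(t-1)$. The function $\tanh$ is meromorphic, with simple poles at $\ii\pi(k+\tfrac12)$, $k\in\mathbb Z$. Hence every pole of $\sigma_D$ lies on a horizontal line $\operatorname{Im}t=\pi(k+\tfrac12)$, at distance at least $\pi/2$ from the real axis.

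For real $t$, I would apply Cauchy's formula on the circle $|w-t|=\pi/4$. On this circle $|\operatorname{Im}w|\le\pi/4$, so the circle stays a distance $\pi/4$ away from the pole lines, and we get
\[
 |\sigma_D^{(l)}(t)|\le l!(4/\pi)^l\sup_{|w-t|=\pi/4}|\sigma_D(w)|.
\]
It therefore suffices to bound $|\tanh(s)|\le1$ on the strip $|\operatorname{Im}s|\le\pi/4$, since the difference of two such values has modulus at most $2$.

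For this, write $s=x+\ii y$ and use
\[
 |\tanh s|^2=\frac{\sinh^2x+\sin^2y}{\sinh^2x+\cos^2y}.
\]
This quotient is at most $1$ exactly when $\sin^2y\le\cos^2y$, that is, when $|y|\le\pi/4$. The identity is the standard one obtained from $\tanh s=\sinh s/\cosh s$ together with $|\sinh s|^2=\sinh^2x+\sin^2y$ and $|\cosh s|^2=\sinh^2x+\cos^2y$. Since $\cos^2y\ge1/2>0$ on this strip, the denominator never vanishes. This gives the first inequality with the constant $2$, and the case $l=0$ is included.

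The second bound then follows from the triangle inequality and Cauchy–Schwarz:
\[
 |F^{(l)}(t)|\le\sum_j|v_j|\,\|\sigma_D^{(l)}\|_\infty,\qquad \sum_j|v_j|\le\sqrt K\|v\|_2.
\]

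The only delicate point is the choice of radius. A radius larger than $\pi/4$ would still avoid the poles, but then the modulus bound for $\tanh$ is no longer $1$. The radius $\pi/4$ is exactly what yields the clean constant $2\,l!(4/\pi)^l$.
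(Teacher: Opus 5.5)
Your proposal is correct and follows essentially the same route as the paper: you bound $|\tanh|\le1$ on the strip $|\operatorname{Im}z|\le\pi/4$ via the identity $|\tanh z|^2=(\sinh^2x+\sin^2y)/(\sinh^2x+\cos^2y)$, which gives $|\sigma_D|\le2$ there, and then apply Cauchy's estimate on circles of radius $\pi/4$ centered on the real axis. The bound for $F$ then follows, as in the paper, from summing the translates and using $\|v\|_1\le\sqrt K\|v\|_2$.
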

\begin{proof}
For $z=x+\ii y$ with $x,y\in\R$,
\[
 |\tanh z|^2=
 \frac{\sinh^2x+\sin^2y}{\sinh^2x+\cos^2y}\le1
 \quad\text{if }|y|\le\pi/4.
\]
The strip contains no poles, and $|\sigma_D(z)|\le2$ there. Cauchy's
formula on circles of radius $\pi/4$ centered on the real axis gives the
first inequality. Summing over the translates and using
$\|v\|_1\le\sqrt K\|v\|_2$ gives the second.
\end{proof}

\begin{theorem}[Tanh-difference coefficient recovery]
\label{gub:d2:global-main}\label{gub:d:general-main}
Assume \eqref{gub:eq:domain-sandwich} and use the direction selectors of
Proposition~\ref{gub:ass:selectors}, with common even degree $r$ and
coefficient bound $B_{\mathrm{sel}}$. For the prescribed tanh-difference
midpoint dictionary, every $c\in\C^{JK}$ satisfies
\begin{equation}\label{gub:d:general-coefficient}
 \|V_c\|_{L^2(\Omega)}\ge e^{-CM^2/T_M}\|c\|_2
\end{equation}
for all sufficiently large real $M$.
More generally, let $K\ge1$ be an integer and let $A,h>0$.
Consider the ridge sum \eqref{gub:d2:detail:physical-network} with any common
width $\eps>0$ and real centered offsets satisfying
$|\beta_{\nu j}|\le A$ for all $\nu,j$, and
$|\beta_{\nu i}-\beta_{\nu j}|\ge h$ for all $\nu$ and $i\ne j$.
Define $E_0$ by \eqref{gub:d2:E0} with these $K,\eps,h$.
If $0<\eps\le1$ and $4\sqrt d\,\eps\le\rho$, then
every $c\in\C^{JK}$ satisfies
\begin{equation}\label{gub:d2:global-parameter-bound}
 \|V_c\|_{L^2(\Omega)}
 \ge\frac{\eps^{d/2}}{B_{\mathrm{sel}}\sqrt{KJ}}
 e^{-C_{A,d}[E_0+(r+1)\log(r+2)]}\|c\|_2.
\end{equation}
Here $C_{A,d}$ depends only on $A,d$. For the midpoint dictionary,
take $A=B_0$, $h=h_M$, and $\eps=\eps_{D,M}$. The constant $C$ and
the threshold for $M$ depend only on the fixed construction and
geometric parameters.
\end{theorem}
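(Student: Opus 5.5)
We follow the Gaussian argument of Theorem~\ref{gub:g:coefficient-recovery}, with one difference: the tanh-difference estimate of Theorem~\ref{gub:d2:local-recovery} lives on the unit-width interval $I=[-1,1]$. We therefore rescale the observation region by $\eps$, which is why the factor $\eps^{d/2}$ appears and why we use the scaled cube \eqref{gub:hd:detail:scaled-observation-cube}. The plan is to prove the general bound \eqref{gub:d2:global-parameter-bound} first and then specialize it.

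\emph{Step 1: isolate one group.} Fix $\nu$. Set $D_\nu=P_\nu(\nabla_x)$, which satisfies $D_\nu^*=D_\nu$ because $r$ is even and the coefficients are real. Write $G_\nu(s)=\sum_jc_{\nu j}\sigma_D((s-\beta_{\nu j})/\eps)$. By \eqref{dir:operator}, $D_\nu V_c=G_\nu^{(r)}(\omega_\nu\cdot(x-x_0))$. In rotated coordinates $x=x_0+Q_\nu\eps(t,y)$ we have $G_\nu(\eps t)=F_\nu(t)$, where $F_\nu(t)=\sum_jc_{\nu j}\sigma_D(t-a_j)$ and $a_j=\beta_{\nu j}/\eps$. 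The offsets $a_j$ satisfy \eqref{gub:d2:scaled-assumptions}, and $G_\nu^{(r)}(\eps t)=\eps^{-r}F_\nu^{(r)}(t)$.

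\emph{Step 2: test function.} Take the cutoff $W_r$ of Lemma~\ref{gub:g:cutoff} with cube side $2$, so $a=2$. Then $\eta_r\ge(3/4)^{r+1}$ on $|t|\le1$ and the support is $[-2,2]^d$. Define $\psi_\nu(x)=W_r(t,y)F_\nu^{(r)}(t)$. By \eqref{gub:hd:detail:scaled-observation-cube}, $\psi_\nu$ is compactly supported in $\Omega$. The Jacobian $\eps^d$ together with $\int\chi_r=1$ gives
\[
\int_\Omega (D_\nu V_c)\overline{\psi_\nu}\dd x=\eps^{d-r}\int\eta_r|F_\nu^{(r)}|^2\dd t\ge\eps^{d-r}e^{-C(r+1)}\|F_\nu^{(r)}\|_{L^2(I)}^2 .
\]
Theorem~\ref{gub:d2:local-recovery} bounds the last norm below by $e^{-C_A[E_0+r\log(r+2)]}\|c_\nu\|_2$.

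\emph{Step 3: upper bound via integration by parts.} Mollify as in Step~4 of the Gaussian proof to get $\int (D_\nu V_c)\overline{\psi_\nu}=\int V_c\overline{D_\nu\psi_\nu}$. In $z$-coordinates the operator becomes $\eps^{-r}P_\nu(Q_\nu\nabla_z)$. Lemma~\ref{gub:lem:rotation} bounds its coefficient sum by $d^{r/2}B_{\mathrm{sel}}\eps^{-r}$. For the derivatives of $\psi_\nu$ we combine the Leibniz rule, \eqref{gub:g:cutoff-multivariate-bound}, and Lemma~\ref{gub:d2:native-derivatives} at orders $l\le2r$. This gives $\|\partial_z^\alpha\psi\|_{L^\infty}\le\sqrt K\,e^{C(r+1)\log(r+2)}\|c_\nu\|_2$, and the fixed support volume $4^d$ turns it into an $L^2$ bound. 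Returning to $x$, the $L^2$ norm picks up a factor $\eps^{d/2}$, so
\[
\|D_\nu\psi_\nu\|_{L^2(\Omega)}\le\eps^{d/2-r}B_{\mathrm{sel}}\sqrt K e^{C(r+1)\log(r+2)}\|c_\nu\|_2 .
\]
Applying Cauchy--Schwarz, the factors $\eps^{-r}$ cancel on both sides. We then divide by $\|c_\nu\|_2$ and pick $\nu$ with $\|c_\nu\|_2\ge J^{-1/2}\|c\|_2$. This leaves $\eps^{d-d/2}=\eps^{d/2}$ and yields \eqref{gub:d2:global-parameter-bound}.

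\emph{Step 4: specialize.} Take $\eps=\eps_{D,M}=T_M/M$, $h=h_M\ge\kappa_0/M$, $A=B_0$, $K\le C_KM$, $r=O(M)$, $B_{\mathrm{sel}}\le e^{CMT_M}$ and $J\le CM^{d-1}$. With these choices $E_0\le CM\cdot M/T_M=CM^2/T_M$, since $\log(2+\eps/h)=O(T_M)$ is of lower order. The term $(r+1)\log(r+2)=O(MT_M)$, and $B_{\mathrm{sel}}$ contributes $O(MT_M)$ as well. Finally, $\eps^{d/2}$ and $\sqrt{KJ}$ contribute only $O(T_M)$. All of these are $o(M^2/T_M)$, which gives \eqref{gub:d:general-coefficient}.

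The main obstacle is the bookkeeping of the powers of $\eps$ in Step~3. The derivatives that fall on $F_\nu^{(r)}$ have orders up to $2r$ in the unit-width variable, where they are harmless by Lemma~\ref{gub:d2:native-derivatives}. The $\eps^{-r}$ produced by the selector must be matched exactly against the $\eps^{-r}$ in $D_\nu V_c$. Working entirely in the scaled coordinates $z=Q_\nu^T(x-x_0)/\eps$ should make this cancellation transparent; otherwise an $\eps^{-r}=e^{O(MT_M)}$ loss remains, which would still be absorbable in the specialized bound but not in the stated general form.
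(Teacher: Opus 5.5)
Your proposal is correct and follows the paper's proof: the same $a=2$ cutoff on the $\eps$-scaled rotated cube, the Leibniz bounds using derivatives of $F_\nu$ up to order $2r$, integration by parts, Theorem~\ref{gub:d2:local-recovery} on a maximal group, and the same lower-order bookkeeping in the specialization. The only difference is cosmetic. The paper rescales to $y=(x-x_0)/\eps$ and applies $P_\nu(\nabla_y)$ to $V_c(x_0+\eps y)$, so no $\eps^{-r}$ ever appears and only the Jacobian factor $\eps^{-d/2}$ remains. You instead keep $P_\nu(\nabla_x)=\eps^{-r}P_\nu(Q_\nu\nabla_z)$, and the two $\eps^{-r}$ factors cancel exactly, as you verified.
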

\begin{proof}
\emph{Fixed-center scaling and support.}
Write $c_\nu=(c_{\nu j})_{j=1}^K$ and rescale about $x_0$:
\begin{equation}\label{gub:d2:detail:fixed-center-scaling}
\begin{aligned}
 \Omega_\eps&=\{y:x_0+\eps y\in\Omega\},\\
 V_*(y)&=V_c(x_0+\eps y)=\sum_{\nu=1}^JF_\nu(\omega_\nu\cdot y),
 \end{aligned}
\end{equation}
where
$F_\nu(t)=\sum_jc_{\nu j}\sigma_D(t-a_{\nu j})$ and
$a_{\nu j}=\beta_{\nu j}/\eps$.
The scaled offsets satisfy \eqref{gub:d2:scaled-assumptions} uniformly in
$\nu$, since
\begin{equation}\label{gub:d2:detail:scaled-bias-check}
\begin{aligned}
 |a_{\nu j}|&\le A/\eps,\\
 |a_{\nu i}-a_{\nu j}|&=|\beta_{\nu i}-\beta_{\nu j}|/\eps
                         \ge h/\eps\quad(i\ne j).
\end{aligned}
\end{equation}
For each $\nu$, choose an orthogonal matrix $Q_\nu$ with first column
$\omega_\nu$ and write $y=Q_\nu z$, where
$z=(t,u)\in\R\times\R^{d-1}$. Then $t=\omega_\nu\cdot y$.
For $z\in[-2,2]^d$,
\[
 |x_0+\eps Q_\nu z-x_0|=\eps|z|\le2\sqrt d\,\eps\le\rho/2.
\]
The cube $x_0+\eps Q_\nu[-2,2]^d$ therefore lies in
$\overline B_{\rho/2}(x_0)$, at distance at least $\rho/2$ from
$\R^d\setminus\Omega$.

\emph{Selection and the nonnegative testing identity.}
Take the cutoffs of Lemma~\ref{gub:g:cutoff} with the scale parameter
fixed at $a=2$ throughout the proof:
\begin{equation}\label{gub:d2:detail:scaled-cutoffs}
\begin{aligned}
 \eta_r(t)&=\theta_r(t/2),\\
 \chi_r(u)&=\prod_{l=1}^{d-1}\frac{\theta_r(u_l/2)}{2Z_r},\\
 W_r(t,u)&=\eta_r(t)\chi_r(u),
 \end{aligned}
\end{equation}
where $\theta_r$ and $Z_r$ are defined in that lemma.
Then $\int\chi_r=1$, $\chi_r\ge0$, and
$\eta_r(t)\ge(3/4)^{r+1}$ for $t\in I$. The cutoff lemma
also gives support in $[-2,2]^d$ and
$\|\partial^\alpha W_r\|_{L^\infty(\R^d)}\le
 e^{C_d(r+1)\log(r+2)}$ for $|\alpha|\le r$.
Define the test function $\psi_\nu:\R^d\to\C$ by setting
\[
 \begin{aligned}
 \mathcal D_\nu&=P_\nu(\nabla_y), &G_\nu&=F_\nu^{(r)},\\
 \psi_\nu(Q_\nu(t,u))&=W_r(t,u)G_\nu(t),
 &U_\nu&=\|G_\nu\|_{L^2(I)}.
 \end{aligned}
\]
The support of $\psi_\nu$ lies in $Q_\nu[-2,2]^d$, which is compactly contained in
$\Omega_\eps$. Since $P_\nu$ has real coefficients and even degree $r$,
the formal $L^2$ adjoint is
$\mathcal D_\nu^*=(-1)^r\mathcal D_\nu=\mathcal D_\nu$.
For every group $\mu$, homogeneity of degree $r$ and the chain rule give
\[
 P_\nu(\nabla_y)F_\mu(\omega_\mu\cdot y)
 =P_\nu(\omega_\mu)F_\mu^{(r)}(\omega_\mu\cdot y).
\]
The selector values give
\begin{equation}\label{gub:d2:detail:exact-selection}
 \mathcal D_\nu V_*=G_\nu(\omega_\nu\cdot y).
\end{equation}
All derivatives here are taken with respect to $y$.
The orthogonal change of variables has Jacobian of absolute value one.
The normalization in the transverse variables and the positivity of the
cutoff yield
\begin{equation}\label{gub:d2:positive-energy}
\begin{aligned}
 \int_{\Omega_\eps}(\mathcal D_\nu V_*)\overline{\psi_\nu}\dd y
 &=\int_\R\eta_r(t)|G_\nu(t)|^2\dd t\\
 &\ge e^{-C(r+1)}U_\nu^2.
 \end{aligned}
\end{equation}

\emph{Derivative bounds for the test function.}
Lemma~\ref{gub:d2:native-derivatives} gives, for all orders $0\le l\le2r$,
\begin{equation}\label{gub:d2:detail:derivatives-to-twice-r}
 \|F_\nu^{(l)}\|_{L^\infty(\R)}
 \le\sqrt K e^{C(r+1)\log(r+2)}\|c_\nu\|_2.
\end{equation}
The inequalities $l!\le(2r)!$ and
$(4/\pi)^l\le\max\{1,4/\pi\}^{2r}$ give a bound uniform over these orders.
For a multi-index $\alpha$ with $|\alpha|\le r$, the weak Leibniz rule
and the fact that $F_\nu^{(r)}$ depends only on $t=z_1$ give
\begin{equation}\label{gub:d2:detail:test-leibniz}
\partial_z^\alpha[W_r(z)F_\nu^{(r)}(t)]
 =\sum_{k=0}^{\alpha_1}\binom{\alpha_1}{k}
     (\partial_z^{\alpha-ke_1}W_r)(z)F_\nu^{(r+k)}(t).
\end{equation}
Here $e_1=(1,0,\ldots,0)$.
The cutoff is differentiated at most $r$ times, and $F_\nu$
at most $2r$ times. The sum of the binomial coefficients is
$2^{\alpha_1}\le2^r$. Since the support volume is $4^d$, we obtain
\begin{equation}
\begin{aligned}
 \|\partial_z^\alpha[\psi_\nu(Q_\nu z)]\|_{L^2(\R^d)}
 &\le4^{d/2}2^r e^{C_d(r+1)\log(r+2)}
             \sqrt K e^{C(r+1)\log(r+2)}\|c_\nu\|_2\\
 &\le\sqrt K e^{C_d'(r+1)\log(r+2)}\|c_\nu\|_2.
\end{aligned}
 \label{gub:d2:detail:test-all-derivatives}
\end{equation}
Under $\nabla_y=Q_\nu\nabla_z$, the rotated symbol is
$P_\nu(Q_\nu\,\cdot)$. Lemma~\ref{gub:lem:rotation} bounds its
coefficient $\ell^1$ norm by $d^{r/2}B_{\mathrm{sel}}$. Combining this
bound with \eqref{gub:d2:detail:test-all-derivatives} and absorbing
$d^{r/2}$ into the exponential gives
\begin{equation}\label{gub:d2:adjoint-test-bound}
 \|\mathcal D_\nu^*\psi_\nu\|_{L^2(\Omega_\eps)}
 \le B_{\mathrm{sel}}\sqrt K
 e^{C_d(r+1)\log(r+2)}\|c_\nu\|_2.
\end{equation}

\emph{Integration by parts and rescaling.}
The weak derivative bounds and compact support give
$\psi_\nu\in W^{r,2}(\R^d)$. Since its support is compactly contained in
$\Omega_\eps$, mollification approximates $\psi_\nu$ in $W^{r,2}$ by
functions in $C_c^\infty(\Omega_\eps)$. Integration by parts against
these approximations, followed by passage to the $W^{r,2}$ limit, gives
\begin{equation}
\begin{aligned}
 \left|\int_{\Omega_\eps}(\mathcal D_\nu V_*)\overline{\psi_\nu}\dd y\right|
 &=\left|\int_{\Omega_\eps}V_*\overline{\mathcal D_\nu^*\psi_\nu}\dd y\right|\\
 &\le\|V_*\|_{L^2(\Omega_\eps)}
          \|\mathcal D_\nu^*\psi_\nu\|_{L^2(\Omega_\eps)}.
\end{aligned}
 \label{gub:d2:detail:integration-by-parts}
\end{equation}
The change of variables $x=x_0+\eps y$ has
$\dd x=\eps^d\dd y$, and hence
\begin{equation}\label{gub:d2:detail:physical-jacobian}
 \|V_*\|_{L^2(\Omega_\eps)}^2
 =\eps^{-d}\|V_c\|_{L^2(\Omega)}^2.
\end{equation}
Combining \eqref{gub:d2:positive-energy},
\eqref{gub:d2:adjoint-test-bound}, the integration-by-parts identity,
and the change of variables gives
\begin{equation}\label{gub:d2:energy-coefficient-pairing}
 e^{-C(r+1)}U_\nu^2
 \le\eps^{-d/2}\|V_c\|_{L^2(\Omega)}B_{\mathrm{sel}}\sqrt K
 e^{C_d(r+1)\log(r+2)}\|c_\nu\|_2.
\end{equation}

\emph{Recovery of a maximal direction group.}
The estimate is immediate for $c=0$. If $c\ne0$, choose a direction
group with maximal coefficient norm, so that
$\|c_\nu\|_2\ge J^{-1/2}\|c\|_2>0$.
Theorem~\ref{gub:d2:local-recovery} gives
\[
 U_\nu\ge e^{-C_A[E_0+r\log(r+2)]}\|c_\nu\|_2.
\]
Square this bound and substitute it into
\eqref{gub:d2:energy-coefficient-pairing} to obtain
\begin{equation}
\begin{aligned}
 &e^{-C(r+1)-2C_A[E_0+r\log(r+2)]}\|c_\nu\|_2^2\\
 &\qquad\le\eps^{-d/2}\|V_c\|_{L^2(\Omega)}B_{\mathrm{sel}}\sqrt K
         e^{C_d(r+1)\log(r+2)}\|c_\nu\|_2.
\end{aligned}
 \label{gub:d2:detail:group-cancellation}
\end{equation}
Canceling the nonzero factor $\|c_\nu\|_2$ and using maximality yields
\eqref{gub:d2:global-parameter-bound}.

\emph{Evaluation for the midpoint dictionary.}
For the prescribed midpoint dictionary, take $A=B_0$ and $h=h_M$.
The bounds $K\le CM$, $\eps=T_M/M$, and $h_M\ge\kappa_0/M$ give
\begin{equation}\label{gub:d2:actual-E0}
 E_0\le CM\left[1+\frac{M}{T_M}+
                 \log\left(2+\frac{T_M}{\kappa_0}\right)\right].
\end{equation}
Since $\eps=T_M/M\to0$, the conditions $\eps\le1$ and
$4\sqrt d\,\eps\le\rho$ hold for all sufficiently large $M$.
Proposition~\ref{gub:ass:selectors} and
\eqref{gub:eq:full-direction-count} give $r\le C_dM$ and
$J\le C_dM^{d-1}$. The terms in the logarithm of
\eqref{gub:d2:global-parameter-bound} can therefore be bounded as follows:
\begin{equation}\label{gub:d2:detail:final-costs}
\begin{aligned}
 K&\le CM,\\
 K\eps^{-1}&\le CM^2/T_M,\\
 K\log(2+\eps/h_M)&\le CM\log(2+T_M/\kappa_0),\\
 (r+1)\log(r+2)&\le CMT_M,\\
 \log B_{\mathrm{sel}}&\le CMT_M,\\
 \log(KJ)&\le C_dT_M,\\
 \log(\eps^{-d/2})&\le C_dT_M.
 \end{aligned}
\end{equation}
These bounds give
\[
 \|V_c\|_{L^2(\Omega)}\ge e^{-C(M^2/T_M+MT_M)}\|c\|_2.
\]
Since $(MT_M)/(M^2/T_M)=T_M^2/M\to0$, all terms can be absorbed into $CM^2/T_M$.
The constants and threshold depend only on the fixed construction and
geometric parameters. They are uniform with respect to the rounding in
$h_M$.
\end{proof}

\section{Gram eigenvalues and condition numbers}
\label{gub:sec:spectrum}

Fix an integer $m\ge0$ and an activation
$\sigma\in\{\sigma_G,\sigma_D\}$. Except in the Gauss-Legendre extension
below, we use the midpoint columns $g_1,\ldots,g_N$ from
Section~\ref{gub:sec:actual-dictionary}, with
$N=N_M^{\mathrm{eq}}$, $\eps=\eps_{\sigma,M}$, and
$d_i=\|g_i\|_{H^m(\Omega)}>0$. Theorems~\ref{gub:g:coefficient-recovery}
and~\ref{gub:d2:global-main} provide a common constant $C_{\mathrm{rec}}>0$
and a common threshold such that, for either activation, every $c\in\C^N$, and
all sufficiently large real $M$,
\begin{equation}\label{gub:eq:unified-recovery}
 \|V_c\|_{L^2(\Omega)}\ge e^{-C_{\mathrm{rec}}M^2/T_M}\|c\|_2.
\end{equation}
The full $H^m$ norm contains the $L^2$ term. The coefficient-recovery
estimate therefore yields a lower bound for the smallest Gram eigenvalue
for every fixed $m$. Upper bounds for the column norms give an upper
bound for the largest eigenvalue and control the effect of column normalization.

\subsection{Recovery exponents}
\label{gub:sec:exponent-budget}
We estimate the contributions to the two recovery exponents. The signed offset
count $K$, midpoint spacing $h_M$, common selector degree $r$, and
selector coefficient bound $B_{\mathrm{sel}}$ satisfy
\[
 \begin{aligned}
 K&\le C M,\qquad &h_M&\ge\kappa_0/M,\\
 r&\le C_d M,\qquad &\log B_{\mathrm{sel}}&\le C_dMT_M.
 \end{aligned}
\]
These bounds are uniform over the direction groups.

For Gaussian features, propagation on a fixed complex neighborhood
contributes $C\eps_{G,M}^{-2}=CM^2/T_M$ to the exponent. The
fixed-band estimate, derivative factors, and selector coefficients contribute
at most
\begin{equation}\label{gub:eq:gaussian-budget}
 C\bigl[KT_M+r\log(r+2)+r\log(1/\eps_{G,M})
         +\log B_{\mathrm{sel}}\bigr]\le C_dMT_M.
\end{equation}
Here $\log(1/\eps_{G,M})\le T_M$ for large $M$. Thus the Gaussian
recovery exponent is $C(M^2/T_M+MT_M)$.

For the tanh difference, the quantity $E_0$ defined in
\eqref{gub:d2:E0}, evaluated at the midpoint spacing, satisfies
\begin{equation}\label{gub:eq:tanh-budget}
 \begin{aligned}
 E_0&=K\bigl[1+\eps_{D,M}^{-1}+\log(2+\eps_{D,M}/h_M)\bigr]\\
 &\le CM\bigl[1+M/T_M+\log(2+T_M/\kappa_0)\bigr]\\
 &\le C\bigl[M^2/T_M+M\log(T_M+2)+M\bigr].
 \end{aligned}
\end{equation}
In \eqref{gub:d2:global-parameter-bound}, the derivative, selector, and
cutoff terms contribute $O_d(MT_M)$, while the group count and
rescaled volume contribute $O_d(T_M)$.
The lower-order terms are absorbed uniformly for sufficiently large
$M$ by the comparisons
\begin{equation}\label{gub:eq:budget-absorption}
\begin{aligned}
 \frac{MT_M}{M^2/T_M}&=\frac{T_M^2}{M}\longrightarrow0,\\
 \frac{M\log(T_M+2)}{M^2/T_M}
 &=\frac{T_M\log(T_M+2)}{M}\longrightarrow0.
 \end{aligned}
\end{equation}
These comparisons give a common recovery threshold for both activations,
all direction groups, and all coefficient vectors.

\subsection{Column norms and Gram eigenvalues}

\begin{lemma}\label{gub:lem:column-norm}
For $\Omega\subset B_R(0)$, an integer $m\ge0$, a unit vector $\omega$,
$b\in\R$, and $0<\eps\le1$, either activation satisfies
\begin{equation}\label{gub:eq:column-norm}
 \left\|\sigma\!\left(\frac{\omega\cdot x-b}{\eps}\right)
 \right\|_{H^m(\Omega)}^2
 \le C_{d,m,R,\sigma}\eps^{1-2m}.
\end{equation}
The constant is independent of the offset and direction.
\end{lemma}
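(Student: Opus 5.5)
The plan is a direct computation by rotation and the chain rule, reducing everything to one-dimensional $L^2$ integrals of derivatives of $\sigma$. Write $g(x)=\sigma((\omega\cdot x-b)/\eps)$. For a multi-index $\alpha$ with $|\alpha|=k\le m$, the chain rule gives
\[
 \partial^\alpha g(x)=\omega^\alpha\eps^{-k}\sigma^{(k)}\!\left(\frac{\omega\cdot x-b}{\eps}\right),
\]
and $|\omega^\alpha|\le1$. The number of multi-indices with $|\alpha|\le m$ depends only on $d,m$. It therefore suffices to bound
\[
 \int_\Omega\left|\sigma^{(k)}\!\left(\frac{\omega\cdot x-b}{\eps}\right)\right|^2\dd x
\]
by $C\eps$ for each $0\le k\le m$. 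Multiplying by $\eps^{-2k}\le\eps^{-2m}$, since $\eps\le1$, then gives the stated bound $C\eps^{1-2m}$.

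For the integral, enlarge $\Omega$ to $B_R(0)$ and rotate by an orthogonal $Q$ with $Qe_1=\omega$, as in Lemma~\ref{gub:lem:rotation}; this preserves volume. The ball is contained in the slab $\{|t|<R\}\times B^{d-1}_R(0)$, where $t=\omega\cdot x$. Fubini then bounds the integral by
\[
 |B^{d-1}_R|\int_{\R}\left|\sigma^{(k)}\!\left(\frac{t-b}{\eps}\right)\right|^2\dd t
 =|B^{d-1}_R|\,\eps\,\|\sigma^{(k)}\|_{L^2(\R)}^2 ,
\]
using the substitution $s=(t-b)/\eps$. The right side is independent of $b$ and $\omega$.

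The only remaining point is that $\sigma^{(k)}\in L^2(\R)$ for each fixed $k\le m$, and this is the step to check, though it is mild. For $\sigma_G$, every derivative is a polynomial times $e^{-t^2}$, so it is Schwartz and the claim is immediate. For $\sigma_D$, the function itself decays like $e^{-2|t|}$, but a pointwise sup bound such as Lemma~\ref{gub:d2:native-derivatives} is not enough. I would instead apply Cauchy's formula on circles of radius $\pi/8$ centered at real points. On the strip $|\operatorname{Im}z|\le\pi/8$, the identity
\[
 \tanh(z+1)-\tanh(z-1)=\frac{\sinh 2}{\cosh(z+1)\cosh(z-1)}
\]
together with $|\cosh(x+\ii y)|\ge\cosh x\cdot\cos y$ yields $|\sigma_D(z)|\le Ce^{-2|\operatorname{Re}z|}$. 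Cauchy's estimate then gives $|\sigma_D^{(k)}(t)|\le C_k e^{-2|t|}$, which is square integrable.

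Collecting constants gives $C_{d,m,R,\sigma}$, independent of $b$ and $\omega$.
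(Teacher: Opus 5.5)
Your proposal is correct and follows the paper's proof essentially step for step. Both use the chain rule with $|\omega^\alpha|\le1$, Fubini over slices of $\Omega\subset B_R(0)$ with the substitution $s=(t-b)/\eps$, and Cauchy estimates on a fixed strip to obtain $|\sigma_D^{(k)}(t)|\le C_ke^{-2|t|}$; the only difference is cosmetic, since the paper writes $\sigma_D(z)=2\sinh2/(\cosh(2z)+\cosh2)$ and uses circles of radius $\pi/16$, whereas you use the product form $\sinh2/(\cosh(z+1)\cosh(z-1))$ with $|\cosh(x+\ii y)|\ge\cosh x\cos y$ and radius $\pi/8$, which gives the same decay.
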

\begin{proof}
For $|\alpha|=l\le m$,
\[
 \partial^\alpha\sigma((\omega\cdot x-b)/\eps)
 =\eps^{-l}\omega^\alpha\sigma^{(l)}((\omega\cdot x-b)/\eps),
 \qquad |\omega^\alpha|\le1.
\]
In the coordinates $x=t\omega+y$, $y\perp\omega$, every transverse section
of $\Omega\subset B_R(0)$ has $(d-1)$-dimensional measure at most
$C_{d,R}$. Fubini's theorem and the substitution $u=(t-b)/\eps$ give
\begin{align*}
 \|\partial^\alpha\sigma((\omega\cdot x-b)/\eps)\|_{L^2(\Omega)}^2
 &\le C_{d,R}\eps^{-2l}\int_\R
 |\sigma^{(l)}((t-b)/\eps)|^2\dd t\\
 &=C_{d,R}\eps^{1-2l}\|\sigma^{(l)}\|_{L^2(\R)}^2.
\end{align*}
Each Gaussian derivative is a polynomial multiplied by a Gaussian.
For the tanh-difference activation, the identity
\[
 \sigma_D(z)=\frac{2\sinh2}{\cosh(2z)+\cosh2}
\]
gives $|\sigma_D(z)|\le Ce^{-2|\operatorname{Re}z|}$ on the fixed
strip $|\operatorname{Im}z|\le\pi/8$. Indeed, the real part of the
denominator is at least
$2^{-1/2}\cosh(2\operatorname{Re}z)+\cosh2$.
On the circle $|z-t|=\pi/16$ centered at a real $t$,
$|\operatorname{Re}z|\ge|t|-\pi/16$, so Cauchy's formula gives
$|\sigma_D^{(l)}(t)|\le C_le^{-2|t|}$.
Thus the required scalar derivatives of both activations belong to $L^2(\R)$. Summing
over $|\alpha|\le m$ and using $\eps^{1-2l}\le\eps^{1-2m}$ proves
\eqref{gub:eq:column-norm}.
\end{proof}

\begin{proof}[Proof of Theorem~\ref{gub:thm:main}]
\emph{The smallest raw eigenvalue.}
Because the full Sobolev norm contains the zero-order term,
\[
 \begin{aligned}
 c^*H_{m,\sigma,M}c&=\|V_c\|_{H^m(\Omega)}^2\\
 &\ge\|V_c\|_{L^2(\Omega)}^2
 \ge e^{-2C_{\mathrm{rec}}M^2/T_M}\|c\|_2^2.
 \end{aligned}
\]
Thus
\begin{equation}\label{gub:eq:lambda-min}
 \lambda_{\min}(H_{m,\sigma,M})\ge e^{-2C_{\mathrm{rec}}M^2/T_M}>0.
\end{equation}
This bound proves that $H_{m,\sigma,M}$ is positive definite. It also
proves linear independence of the columns, since $V_c=0$ implies $c=0$.

\emph{The largest raw eigenvalue.}
For a positive semidefinite matrix, the largest eigenvalue does not
exceed the trace
\cite[Sections~4.2 and~4.3]{HornJohnson2013}. Applying
Lemma~\ref{gub:lem:column-norm} to its diagonal entries gives
\begin{equation}\label{gub:eq:lambda-max}
 \lambda_{\max}(H_{m,\sigma,M})\le\tr H_{m,\sigma,M}
 \le C_{d,m,R,\sigma}N\eps^{1-2m}.
\end{equation}
Hence
\begin{equation}\label{gub:eq:raw-final}
 \begin{aligned}
 \kappa_2(H_{m,\sigma,M})&\le CN\eps^{1-2m}e^{2C_{\mathrm{rec}}M^2/T_M}\\
 &\le e^{C_mM^2/T_M}.
 \end{aligned}
\end{equation}
For the last inequality, the logarithm of the prefactor satisfies
\[
 \log(CN\eps^{1-2m})
 \le |\log C|+\log N+|1-2m|\,|\log\eps|\le CT_M.
\]
Here $N\asymp M^d$, and both prescribed scales satisfy
$|\log\eps|=O(T_M)$. Since $T_M=o(M^2/T_M)$, the prefactor is
absorbed by increasing the constant in \eqref{gub:eq:raw-final}.

\emph{The normalized eigenvalues.}
Set $d_{\max}=\max_i d_i$. For $z\in\C^N$, substitute $c_i=z_i/d_i$ into
\eqref{gub:eq:unified-recovery}. This gives
\[
 \begin{aligned}
 z^*\widetilde H_{m,\sigma,M}z
 &\ge e^{-2C_{\mathrm{rec}}M^2/T_M}\sum_i\frac{|z_i|^2}{d_i^2}\\
 &\ge\frac{e^{-2C_{\mathrm{rec}}M^2/T_M}}{d_{\max}^2}\|z\|_2^2.
 \end{aligned}
\]
As $d_{\max}^2\le C\eps^{1-2m}$,
$\lambda_{\min}(\widetilde H_{m,\sigma,M})\ge e^{-C_mM^2/T_M}$ after adjusting
constants. Every normalized diagonal entry equals one, so
$\lambda_{\max}(\widetilde H_{m,\sigma,M})\le\tr\widetilde H_{m,\sigma,M}=N$.
Combining the two bounds gives
\[
 \begin{aligned}
 \kappa_2(\widetilde H_{m,\sigma,M})
 &\le N d_{\max}^2 e^{2C_{\mathrm{rec}}M^2/T_M}\\
 &\le CN\eps^{1-2m}e^{2C_{\mathrm{rec}}M^2/T_M}.
 \end{aligned}
\]
Consequently,
\begin{equation}\label{gub:eq:normalized-final}
 \kappa_2(\widetilde H_{m,\sigma,M})\le e^{C_m'M^2/T_M}.
\end{equation}
This normalization estimate uses only the upper bound on $d_{\max}$.

\emph{The exponent in terms of the column count.}
Since $N_M^{\mathrm{eq}}\asymp M^d$ for each activation,
\[
 \frac{M^2}{\log(M+2)}
 \asymp\frac{(N_M^{\mathrm{eq}})^{2/d}}
 {\log(N_M^{\mathrm{eq}}+2)}.
\]
This comparison proves \eqref{gub:eq:main-N}. We choose common constants
$C_m,M_0$ for both activations, both Gram matrices, and both forms of
the exponent. These constants depend only on $m$
and the fixed construction and geometric parameters.
\end{proof}

\subsection{Gauss-Legendre condition-number upper bounds}
\label{gub:sec:gl-extension}

\begin{corollary}
\label{gub:cor:gl-upper}
Replace the midpoint offsets by the positive Gauss-Legendre nodes
\eqref{gub:eq:gl-biases}, keeping the other fixed parameters of
Theorem~\ref{gub:thm:main}. For each fixed integer $m\ge0$, there are
constants $C_m,C_m',M_0>0$ such that, for either activation and every
real $M\ge M_0$, the raw and column-normalized full $H^m$ Gram
matrices are positive definite and satisfy
\begin{equation}\label{gub:eq:gl-upper}
\begin{aligned}
 \max\{\kappa_2(H_{m,\sigma,M}^{\mathrm{GL}}),
       \kappa_2(\widetilde H_{m,\sigma,M}^{\mathrm{GL}})\}
 &\le\exp\!\left(C_m\frac{M^2}{\log(M+2)}\right)\\
 &\le\exp\!\left(C_m'
 \frac{(N_M^{\mathrm{GL}})^{2/d}}{\log(N_M^{\mathrm{GL}}+2)}\right).
 \end{aligned}
\end{equation}
The constants depend only on $m$ and the fixed construction and
geometric parameters.
\end{corollary}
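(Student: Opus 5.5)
The plan is to rerun the proof of Theorem~\ref{gub:thm:main} with the Gauss-Legendre offset data in place of the midpoint data. The only place where the offset geometry enters is the coefficient-recovery estimate \eqref{gub:eq:unified-recovery}. The column-norm Lemma~\ref{gub:lem:column-norm} holds for arbitrary offsets and directions, and the trace bound $\lambda_{\max}\le\tr H\le CN\eps^{1-2m}$ still holds with $N=N_M^{\mathrm{GL}}\asymp M^d$. So the main task is to establish
\[
 \|V_c\|_{L^2(\Omega)}\ge e^{-CM^2/T_M}\|c\|_2
\]
for the Gauss-Legendre family. Positive definiteness and the raw and normalized eigenvalue bounds then follow exactly as in the proof of Theorem~\ref{gub:thm:main}. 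The final comparison $M^2/\log(M+2)\asymp(N_M^{\mathrm{GL}})^{2/d}/\log(N_M^{\mathrm{GL}}+2)$ gives the second inequality in \eqref{gub:eq:gl-upper}.

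By Section~\ref{gub:sec:gl-offsets}, the centered Gauss-Legendre offsets satisfy $K^{\mathrm{GL}}\le CM$ and $|\beta_{\nu j}|\le B_0$. Their separation is $C^{-1}M^{-2}$ rather than $\kappa_0/M$. The direction selectors of Proposition~\ref{gub:ass:selectors} depend only on $X_M$ and are therefore unchanged. For the Gaussian family, I would revisit Lemma~\ref{gub:g:fixed-band} with separation $c_b/M^2$. Then $\delta_M=c_{\mathrm{sep}}/(M^2K)$, and $\log(1/\delta_M)=O(T_M)$ still holds, so the fixed-band bound $e^{-CKT_M}$ survives with a larger constant. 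Lemmas~\ref{gub:g:whole-line}, \ref{gub:g:local-recovery} and Theorem~\ref{gub:g:coefficient-recovery} use the separation only through this fixed-band constant, so they carry over verbatim. This gives the exponent $C(\eps^{-2}+MT_M)=O(M^2/T_M)$.

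For the tanh-difference family, I would apply the general parameter form \eqref{gub:d2:global-parameter-bound} of Theorem~\ref{gub:d2:global-main} with $A=B_0$, $h=C^{-1}M^{-2}$, and $\eps=\eps_{D,M}$. The only changed term is $K\log(2+\eps/h)\le CM\log(2+CMT_M)=O(MT_M)$. Hence $E_0\le C(M^2/T_M+MT_M)$, and the remaining terms are as in \eqref{gub:d2:detail:final-costs}. By \eqref{gub:eq:budget-absorption}, the lower-order contributions are absorbed into $CM^2/T_M$.

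The main point to check carefully is the Gaussian fixed-band step. There the sampling-point construction requires the phase differences $\beta_j/(A_0K)$ to stay within the chord-estimate range, and the Vandermonde bound $(2/\delta_M)^{K-1}$ must remain $e^{O(KT_M)}$ under the weaker $M^{-2}$ separation. Both conditions hold, because the range condition uses only $|\beta_j|\le B_0$, and $\log(M^2K)=O(T_M)$. It also needs checking that the separation bound covers the central pair $\pm b_1^{\mathrm{GL}}$, which \eqref{gub:eq:gl-separation} provides. I would also confirm that every threshold in $M$ depends only on the constants in \eqref{gub:eq:gl-separation}, so that common constants $C_m,C_m',M_0$ can be chosen for both activations.
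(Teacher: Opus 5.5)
Your proposal is correct and follows the paper's proof essentially step for step. You keep the selectors and the offset-independent column-norm and trace bounds, and you rerun the Gaussian fixed-band Vandermonde step with node separation $C^{-1}/(M^2K)$, where $\log(M^2K)=O(T_M)$ preserves the $e^{-CKT_M}$ bound. You then obtain the tanh-difference case from \eqref{gub:d2:global-parameter-bound} with $h=h^{\mathrm{GL}}\ge C^{-1}M^{-2}$, giving $E_0\le C(M^2/T_M+MT_M)$, before the same absorption and spectral arguments as in Theorem~\ref{gub:thm:main}.
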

\begin{proof}
The direction selectors are unchanged. In this proof, $g_i,d_i,N$ denote
the GL columns, their $H^m$ norms, and their total number
$N_M^{\mathrm{GL}}$. Set $K=K^{\mathrm{GL}}$ and let $h^{\mathrm{GL}}$
be the minimum distance between distinct signed GL offsets.
Section~\ref{gub:sec:gl-offsets} gives $K\le CM$ and centered offsets
bounded by $B_0$. Its endpoint and spacing bounds
\eqref{gub:eq:gl-separation} give $h^{\mathrm{GL}}\ge C^{-1}M^{-2}$.

In the proof of Lemma~\ref{gub:g:fixed-band}, the new offset separation
gives Vandermonde node separation at least $C^{-1}/(M^2K)$.
The inverse bound used there remains at most $\exp(CKT_M)$, since
$\log(M^2K)=O(T_M)$. The same proof therefore yields the fixed-band
lower bound with exponent $CKT_M$.
Keep $\eps=\eps_{G,M}$, $r\le C_dM$, and the fixed interior observation
interval. The whole-line estimate and tail bound in the proof of
Lemma~\ref{gub:g:fixed-large-interval} remain valid. The tail factor is
$e^{CMT_M-M^2/T_M}$ and can be absorbed using
\eqref{gub:eq:budget-absorption}. The proofs of local propagation and
compactly supported testing give the same recovery exponent
$C(M^2/T_M+MT_M)$, after increasing fixed constants and thresholds.

For the tanh difference, apply \eqref{gub:d2:global-parameter-bound}
with $A=B_0$, $h=h^{\mathrm{GL}}$, and $\eps=\eps_{D,M}$.
The width conditions hold for all sufficiently large $M$. The GL gap gives
\begin{align*}
 E_0&=K\bigl[1+\eps_{D,M}^{-1}+\log(2+\eps_{D,M}/h^{\mathrm{GL}})\bigr]\\
 &\le C\bigl[M^2/T_M+M\log(2+MT_M)+M\bigr]\\
 &\le C(M^2/T_M+MT_M).
\end{align*}
The remaining selector, derivative, and observation factors contribute
$O_d(MT_M)$ to the exponent. Since $MT_M=o(M^2/T_M)$, both activations
satisfy \eqref{gub:eq:unified-recovery} with their GL columns.
The column-norm bound is uniform in the offsets. The preceding spectral
estimates therefore apply to both the raw and normalized GL matrices.
Finally, $N_M^{\mathrm{GL}}\asymp M^d$ gives the stated feature-count form.
\end{proof}

The matching lower bounds in the next section concern midpoint feature families.

\section{Condition-number lower bounds}
\label{gub:sh:section}

We prove lower bounds of the same exponential order as the upper
bound in Theorem~\ref{gub:thm:main} for the raw Gaussian Gram matrix
and for both the raw and column-normalized tanh-difference Gram matrices.
For the normalized Gaussian matrix, we prove two lower bounds of
smaller order: a polynomial bound on every fixed domain and an $\exp(cM)$ lower
bound on a fixed ellipsoid. The matching examples use the rounded midpoint grid
\eqref{gub:eq:half-grid} on the fixed domains
\[
 \Omega_a=\left\{x\in\R^d:
       \frac{x_1^2}{a^2}+\sum_{j=2}^d x_j^2<1\right\},
 \qquad 0<a<1.
\]
They satisfy $B_a(0)\subset\Omega_a\subset B_1(0)$ and
$\max_{\overline\Omega_a}|x|=1$, so the radius prescribed in
Section~\ref{gub:sec:actual-dictionary} is exactly $R=1$.
Their projection onto the first coordinate has range $(-a,a)$.
Offsets in a fixed subinterval of $(a,1)$ therefore lie outside this
projection. The radial offset range is determined by the actual
domain radius.

For the ellipsoidal examples, fix $c_\theta>0$ and take
\[
 q_M=4\lceil c_\theta M\rceil,\qquad
 k_M^{\mathrm{lat}}=2\lceil c_\theta M\rceil+1.
\]
Here $k_M^{\mathrm{lat}}$ is the latitude count determined by $q_M$
in \eqref{gub:eq:degree-and-step-constant}. The circle grid contains
$(1,0)$. Since $k_M^{\mathrm{lat}}$ is odd, the equal-parameter
Jacobi parity identity in Lemma~\ref{gub:hd:antipodal-completion}
gives a zero latitude at every lift. Choosing that latitude at each
lift yields $e_1=(1,0,\ldots,0)\in X_M$ for every $M>0$.
The ellipsoidal examples use this direction and angular sequence.
The polynomial Gaussian lower bound allows any angular sequence in
Theorem~\ref{gub:thm:main}. Throughout this section, $m\ge0$ is a
fixed integer and the domain is fixed as $M$ increases.

To obtain lower bounds for the full feature family, we select columns
from one signed direction group and extend a test coefficient vector
by zero at the remaining indices. For a subset $I$ of column
indices, let $H_I$ be the corresponding principal submatrix of a positive
definite Gram matrix $H$. Cauchy interlacing \cite{HornJohnson2013} gives
\begin{equation}\label{gub:sh:principal-comparison}
\begin{aligned}
 \lambda_{\min}(H)&\le\lambda_{\min}(H_I),\\
 \lambda_{\max}(H)&\ge\lambda_{\max}(H_I),\\
 \kappa_2(H)&\ge\kappa_2(H_I).
 \end{aligned}
\end{equation}
We normalize columns using their full $H^m$ norms on the same domain
$\Omega$. If $D$ is the diagonal matrix of these norms and $D_I$
is its principal submatrix indexed by $I$, then
\[
 (D^{-1}HD^{-1})_I=D_I^{-1}H_ID_I^{-1}.
\]
Thus \eqref{gub:sh:principal-comparison} also applies after column
normalization. The raw Gaussian bound compares the norms of an
interior column and an exterior column. The normalized Gaussian bounds
use normalized columns to form linear combinations with small norms. The
tanh-difference bounds follow by canceling successive terms in the
exponential expansions of exterior columns.

\subsection{Gaussian lower bounds}

In this subsection, let $\eps=\eps_{G,M}=\sqrt{T_M}/M$ and
$h=h_M$ for the Gaussian midpoint family.

\begin{proposition}[Raw Gaussian lower bound]
\label{gub:sh:gaussian-raw-lower}
Let $d\ge2$ be fixed and set $\Omega=\Omega_{1/4}$, so $R=1$. Use
the Gaussian feature family of \eqref{gub:eq:actual-columns}, with
the angular degrees specified at the start of this section.
For every fixed integer $m\ge0$, there exist $c_m>0$ and $M_0$,
independent of $M$, such that
\begin{equation}
 \kappa_2(H_{m,\sigma_G,M})
 \ge \exp\!\left(c_m\frac{M^2}{T_M}\right),\qquad M\ge M_0.
 \label{gub:sh:gaussian-lower}
\end{equation}
\end{proposition}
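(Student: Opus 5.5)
Since $\kappa_2(H)\ge\lambda_{\max}(H)/\lambda_{\min}(H)$ and, by \eqref{gub:sh:principal-comparison}, every diagonal entry lies between the extreme eigenvalues, it suffices to compare the norms of two columns:
\[
 \kappa_2(H_{m,\sigma_G,M})\ge\frac{\max_i\|g_i\|_m^2}{\min_i\|g_i\|_m^2}.
\]
Because $e_1\in X_M$ with the prescribed angular sequence, I will work with columns of direction $e_1$. One column is an \emph{interior} column whose offset lies near $0$, inside the projection range $(-a,a)$ with $a=1/4$. The other is an \emph{exterior} column whose offset $b$ lies in a fixed subinterval of $(a,1)$, say near $b_*=5/8$. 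Lemma~\ref{gub:lem:rounded-grid} gives spacing $h_M\asymp M^{-1}$, and $B_M\to R=1$, so for large $M$ both kinds of midpoint offsets exist: the smallest positive offset $h_M/2\le\kappa_0/M$, and an offset $b$ with $|b-5/8|\le h_M$.

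\emph{Lower bound for the interior column.} Take $g(x)=e^{-(x_1-h_M/2)^2/\eps^2}$. Its $H^m$ norm is at least its $L^2$ norm. The domain $\Omega_{1/4}$ contains the cylinder $\{|x_1|<1/8,\ |x'|<1/2\}$. On that set, for $|x_1-h_M/2|\le\eps$, the integrand is at least $e^{-2}$. Hence $\|g\|_m^2\ge c\,\eps$, which is only polynomially small.

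\emph{Upper bound for the exterior column.} Take $g(x)=e^{-(x_1-b)^2/\eps^2}$ with $b\ge 5/8-h_M\ge 1/2$. For $x\in\Omega_{1/4}$ we have $|x_1|<1/4$, so $|x_1-b|\ge1/4$. Each derivative $\partial^\alpha g$ with $|\alpha|\le m$ equals $\eps^{-|\alpha|}$ times a Hermite-type polynomial in $(x_1-b)/\eps$, multiplied by the Gaussian. Using \eqref{gub:g:real-derivative-bound}, this is bounded by $e\,m!\,\eps^{-m}e^{-(x_1-b)^2/(2\eps^2)}\le C_m\eps^{-m}e^{-1/(32\eps^2)}$. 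Integrating over the bounded domain gives $\|g\|_m^2\le C_m\eps^{-2m}e^{-1/(16\eps^2)}$.

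\emph{Conclusion.} The ratio is at least $c_m\eps^{2m+1}e^{M^2/(16T_M)}$. Since $\log(1/\eps)=O(T_M)=o(M^2/T_M)$, the polynomial factor is absorbed, which gives \eqref{gub:sh:gaussian-lower} with, for example, $c_m=1/32$ for $M\ge M_0$. The only step needing care is confirming that the grid contains an offset in the fixed exterior window and that the exterior distance is uniform in $M$. Both follow from $h_M\to0$ and $B_M\to1$, so I expect no real obstacle. The interlacing inequality \eqref{gub:sh:principal-comparison}, applied to the $2\times2$ principal submatrix (or directly to the diagonal entries), completes the argument.
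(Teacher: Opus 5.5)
Your proposal is correct and is essentially the paper's proof: both compare coordinate Rayleigh quotients for the near-ridge column with offset $h_M/2$ and for an $e_1$-column whose offset lies a fixed distance outside the projection interval $(-1/4,1/4)$, and both absorb the polynomial factor because $\log(1/\eps)=O(T_M)=o(M^2/T_M)$. The differences are cosmetic: you place the exterior offset near $5/8$ rather than $7/8$, and you use the Cauchy bound \eqref{gub:g:real-derivative-bound}, which gives $\eps^{-2m}$ instead of the paper's $\eps^{-4m}$. You should, however, state explicitly that positive definiteness, which is needed for $\kappa_2=\lambda_{\max}/\lambda_{\min}$, comes from Theorem~\ref{gub:thm:main}.
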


\begin{proof}
Use the prescribed direction $\omega=e_1$.
Lemma~\ref{gub:lem:rounded-grid} gives $h_M\asymp M^{-1}$.
The midpoint nearest $7/8$ belongs to the positive offset grid for large $M$,
since $B_M\ge1$ and $h_M\to0$. Its distance from $7/8$ is at most
$h_M/2$, so it belongs to $[3/4,1]$ once $h_M\le1/4$.
Fix such an offset $b_M$. For every $x\in\Omega$,
\[
 \frac12\le |\omega\cdot x-b_M|\le\frac54.
\]
The corresponding column is
$g_{\mathrm{far}}(x)=\exp(- (\omega\cdot x-b_M)^2/\eps^2)$.
Let $p_k$ denote the polynomial of degree $k$ defined by
$(\mathrm d/\mathrm dt)^k e^{-t^2}=p_k(t)e^{-t^2}$.
These polynomials satisfy $p_0=1$ and
$p_{k+1}=p_k'-2tp_k$, whence, for each fixed $k$,
$|p_k(t)|\le C_k(1+|t|)^k$.
For $|\alpha|=k\le m$, the chain rule gives
\[
 \partial^\alpha g_{\mathrm{far}}(x)
 =\omega^\alpha\eps^{-k}
 p_k\!\left(\frac{\omega\cdot x-b_M}{\eps}\right)
 \exp\!\left(-\frac{(\omega\cdot x-b_M)^2}{\eps^2}\right).
\]
The bounds on $|\omega\cdot x-b_M|$ and $\eps\le1$ give
\[
 |\partial^\alpha g_{\mathrm{far}}(x)|
 \le C_k\eps^{-2k}e^{-1/(4\eps^2)}.
\]
There are $\binom{d+m}{m}$ multi-indices of order at most $m$.
For fixed $m$, we square and integrate these estimates, then sum over
the multi-indices to obtain
\begin{equation}
 \|g_{\mathrm{far}}\|_{H^m(\Omega)}^2
 \le C_{d,m}|\Omega|\eps^{-4m}
       \exp\!\left(-\frac1{2\eps^2}\right).
 \label{gub:sh:gaussian-Hm-small-diagonal}
\end{equation}

For the first offset $b_{0,M}=h_M/2$, write $x=t\omega+y$ with
$y\perp\omega$. The cylinder
\[
 |t-b_{0,M}|\le\eps/2,\qquad |y|\le1/8
\]
is contained in $B_{1/4}(0)\subset\Omega_{1/4}$ once
$b_{0,M}+\eps/2\le1/8$.
Indeed, $|t|\le1/8$ on this cylinder, and hence
\[
 |x|\le\sqrt{(1/8)^2+(1/8)^2}<1/4.
\]
On this cylinder the square of the near-ridge column
$g_{\mathrm{near}}(x)=\exp(- (\omega\cdot x-b_{0,M})^2/\eps^2)$
is at least $e^{-1/2}$. Its volume is
$\eps |B_{1/8}^{d-1}|$, and hence
\begin{equation}
 \|g_{\mathrm{near}}\|_{H^m(\Omega)}^2
 \ge \|g_{\mathrm{near}}\|_{L^2(\Omega)}^2
 \ge c_d\eps.
 \label{gub:sh:gaussian-large-diagonal}
\end{equation}
Positive definiteness follows from Theorem~\ref{gub:thm:main}.
The coordinate Rayleigh quotient for the far column gives an upper
bound for $\lambda_{\min}$. The quotient for the near column gives a
lower bound for $\lambda_{\max}$. Dividing the near-column lower bound
by the far-column upper bound yields
\begin{equation}
 \kappa_2(H_{m,\sigma_G,M})
 \ge c_{d,m}\eps^{4m+1}
             \exp\!\left(\frac1{2\eps^2}\right).
 \label{gub:sh:gaussian-Hm-diagonal-ratio}
\end{equation}
Since $\eps^{-2}=M^2/T_M$ and
$|\log\eps|=O(T_M)=o(M^2/T_M)$,
\[
 \begin{aligned}
 \log\kappa_2(H_{m,\sigma_G,M})
 &\ge\tfrac12\eps^{-2}-(4m+1)|\log\eps|-C_{d,m}\\
 &\ge\tfrac14\eps^{-2}
 \end{aligned}
\]
after increasing $M_0$ according to $d$ and $m$.
\end{proof}

Estimate \eqref{gub:sh:gaussian-Hm-small-diagonal} bounds the small
diagonal entry in the full $H^m$ norm. Column normalization sets every
diagonal entry to one and removes the norm disparity used in this
proof. The next proposition obtains small eigenvalues from linear
combinations of normalized columns.

\begin{proposition}[Normalized Gaussian lower bounds]
\label{gub:sh:gaussian-normalized-lower}
Consider the Gaussian midpoint feature family of
Theorem~\ref{gub:thm:main}.
For every fixed integer $m\ge0$ and every fixed domain in that theorem,
there is $M_0$ such that
\begin{equation}\label{gub:sh:gaussian-normalized-polynomial}
 \kappa_2(\widetilde H_{m,\sigma_G,M})
 \ge (M+2)^\gamma,\qquad
 \gamma=\frac{1}{16\kappa_0^2},\qquad M\ge M_0.
\end{equation}
For $\Omega=\Omega_{1/4}$ and the angular degrees specified at the
start of this section, there are $c_m>0$ and $M_0$ such that
\begin{equation}\label{gub:sh:gaussian-normalized-exterior}
 \kappa_2(\widetilde H_{m,\sigma_G,M})\ge e^{c_mM},
 \qquad M\ge M_0.
\end{equation}
The thresholds and $c_m$ may depend on the fixed domain, $m$, and
construction parameters.
\end{proposition}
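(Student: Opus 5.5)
The two bounds in Proposition~\ref{gub:sh:gaussian-normalized-lower} both come from an explicit test vector $z$ in a Rayleigh quotient. Every normalized diagonal entry equals one, so $\lambda_{\max}(\widetilde H_{m,\sigma_G,M})\ge1$. By \eqref{gub:sh:principal-comparison} it then suffices to restrict to one direction group and find $z$ with $z^*\widetilde H z/\|z\|_2^2$ small. Writing $z_i=c_id_i$, this quotient equals $\|V_c\|_{H^m(\Omega)}^2/\sum_i|c_i|^2d_i^2$. The task is therefore to find a raw combination $V_c$ whose $H^m$ norm is small compared with the diagonally weighted coefficient norm.

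\emph{Polynomial bound.} Fix any prescribed direction $\omega$ and the interior ball $B_\rho(x_0)$. Take the signed offsets $b_j$ whose centered values $\beta_j$ lie in $[-\rho/2,\rho/2]$; there are $\asymp M$ of them. For these columns Lemma~\ref{gub:lem:column-norm} and a cylinder argument as in \eqref{gub:sh:gaussian-large-diagonal} give $d_j^2\asymp\eps^{1-2m}$ up to constants. Choose $c_j=(-1)^jw(\beta_j)$, where $w\in C_c^\infty(-\rho/2,\rho/2)$ is a fixed taper, and set $F(t)=\sum_jc_je^{-(t-\beta_j)^2/\eps^2}$. Then $\widehat F(\tau)=\eps\sqrt\pi e^{-\eps^2\tau^2/4}S(\tau)$ with $S$ a tapered alternating exponential sum. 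By Poisson summation, $S$ is concentrated, up to errors $O(M^{-\infty})$ from the smoothness of $w$, near the frequencies $(2k+1)\pi/h_M$. At these frequencies the Gaussian factor is at most $e^{-\eps^2\pi^2/(4h_M^2)}\le(M+2)^{-\pi^2/(4\kappa_0^2)+o(1)}$, using $h_M=\kappa_0/M+O(M^{-2})$ and $\eps^2=T_M/M^2$. Bounding $\|V_c\|_{H^m(\Omega)}^2$ by $C$ times the one-dimensional $H^m(\R)$ norm, the derivative factors $\tau^{2m}\lesssim h_M^{-2m}$ compare with $d_j^2\asymp\eps^{1-2m}$ only up to powers of $T_M$. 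The resulting quotient is $(M+2)^{-\pi^2/(2\kappa_0^2)}\mathrm{poly}(T_M)$, and this is at most $(M+2)^{-\gamma}$ because $\pi^2/2>1/16$.

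\emph{Exponential bound on $\Omega_{1/4}$.} Use $\omega=e_1$ and $n=\lfloor cM\rfloor$ consecutive exterior offsets $b_0<\cdots<b_n$ in $[3/4,1]$, with spacing $h=h_M$. On $\Omega$, set $y=x_1-1/4\in(-1/2,0)$ and $s=e^{2yh/\eps^2}\in(0,1)$. Each raw column factors as
\[
e^{-(x_1-b_k)^2/\eps^2}=e^{-y^2/\eps^2}\,e^{-(1/4-b_k)^2/\eps^2}\,e^{2y(b_k-1/4)/\eps^2}.
\]
The last factor equals $e^{2y(b_0-1/4)/\eps^2}s^k$. Choose $c_k=\binom nk(-1)^ke^{(1/4-b_k)^2/\eps^2}$. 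Then $V_c=e^{-y^2/\eps^2}e^{2y(b_0-1/4)/\eps^2}(1-s)^n$, and derivatives of order at most $m$ only add $\mathrm{poly}(M)^m$ factors. The weight $e^{2y(b_0-1/4)/\eps^2}$ has effective support $|y|\lesssim\eps^2$, and there $1-s\lesssim h$. Away from that layer the weight is exponentially small, at the rate $e^{-c|y|/\eps^2}$, which beats the factor $(1-s)^n\le1$. Hence $\|V_c\|_{H^m}^2\lesssim\mathrm{poly}(M)\,(Ch)^{2n}\eps^{2}$.

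For the denominator, I would prove a two-sided estimate $d_k^2=e^{-2(1/4-b_k)^2/\eps^2}\mathrm{poly}(M)^{\pm1}$, uniformly in $k$. The upper bound follows from the pointwise bound in the proof of \eqref{gub:sh:gaussian-Hm-small-diagonal} with $1/2$ replaced by $b_k-1/4$. The lower bound comes from the $L^2$ mass in the slab $-\eps^2\le y\le0$ near the tip of $\Omega_{1/4}$, whose transverse section has measure $\gtrsim\eps^{d-1}$. Then $c_k^2d_k^2=\binom nk^2\mathrm{poly}(M)^{\pm1}$, so $\sum_k|c_k|^2d_k^2\ge\mathrm{poly}(M)^{-1}$. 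The quotient is therefore at most $\mathrm{poly}(M)(Ch)^{2n}\le e^{-c'M}$ for $c$ fixed and $M$ large; in fact it is even $e^{-cM\log M}$.

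I expect the main obstacle to be this uniform two-sided control of the exterior norms $d_k$ up to only polynomial factors. The tip geometry of the ellipsoid enters there through the transverse measure near $x_1=1/4$. The Poisson-summation error from truncating the alternating sum in the first part is secondary, since the taper makes it superpolynomially small.
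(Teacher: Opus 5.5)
\textbf{Polynomial bound: correct, by a different route.} The paper uses only $n+1$ consecutive offsets, $n=\lceil\tau^2\rceil$ with $\tau=\eps/h$, and normalized binomial weights $(-1)^j\binom nj/\sqrt{\binom{2n}{n}}$. The symbol is then exactly $4^n\sin^{2n}(u/(2\tau))/\binom{2n}{n}$. A split at $|u|=\tau$ and a Gaussian tail bound give $\lambda_{\min}\le e^{-\tau^2/4}$, and the crude bound $h\le2\kappa_0/M$ yields $\gamma=1/(16\kappa_0^2)$.

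Your fixed-scale taper over $\asymp M$ offsets, analyzed by Poisson summation, is less elementary. It does capture the essentially sharp exponent $\pi^2/(2\kappa_0^2)$ for this mechanism. Two points need to be written out.
\begin{enumerate}
\item Between $|\tau|=\pi/(2h)$ and $\pi/h$, the factor $e^{-\eps^2\tau^2/2}$ exceeds its value at $\pi/h$ by up to $e^{\pi\eps^2s/h}$, where $s$ is the distance to $\pi/h$. This is harmless only because $\eps^2/h\asymp T_M/M\to0$: the excess is $O(1)$ for $s\le h/\eps^2$, and beyond that $\widehat w$ is superpolynomially small while the excess is at most polynomial in $M$.
\item The lower bound $d_j^2\gtrsim\eps^{1-2m}$ must come from integrating $|\partial_i^mg_j|^2$ on the cylinder, with $|\omega_i|\ge d^{-1/2}$, as the paper does. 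The $L^2$ cylinder of the raw proposition gives only $d_j^2\gtrsim\eps$. The lost factor $\eps^{-2m}$ is not absorbed by $(M+2)^{-\pi^2/(2\kappa_0^2)}$ when $\kappa_0$ is large.
\end{enumerate}

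\textbf{Exterior bound: the numerator estimate is wrong.} Your product formula is correct. It is a discrete analogue of the paper's argument, which expands the rescaled columns $f_\zeta$ analytically in a complex offset, annihilates the first $Q-1$ moments in $b-5/8$, and bounds the remainder by Cauchy estimates. However, $V_c$ does not live on the effective support of the weight.

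Write $\sigma=-y/\eps^2\ge0$ and $\beta=b_0-1/4\ge1/2$. Then $|V_c|\le e^{-2\beta\sigma}(1-e^{-2h\sigma})^n$. The weight decays only at an $O(1)$ rate in $\sigma$, while $(1-e^{-2h\sigma})^n\approx(2h\sigma)^n$ grows like $\sigma^n$. The product therefore peaks at $\sigma\approx n/(2\beta)$, that is, at $|y|\asymp n\eps^2\asymp T_M/M$. This point still lies inside the tip of $\Omega_{1/4}$, and there the product is about $(hn/(\beta e))^n$ with $hn\asymp c\kappa_0$ fixed.

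Consequently $\|V_c\|_{H^m}^2$ is $e^{-\Theta(M)}$, not $\mathrm{poly}(M)(Ch)^{2n}\eps^2$, and your claimed $e^{-cM\log M}$ is false for this vector. The sentence ``away from that layer the weight \ldots\ beats $(1-s)^n\le1$'' fails because outside $|y|\lesssim\eps^2$ the weight is only $e^{-O(1)}$-small until $|y|\gtrsim n\eps^2\log M$.

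\textbf{Repair.} Use $1-e^{-x}\le x$ together with
$\sup_{\sigma\ge0}e^{-2\beta\sigma}(2h\sigma)^j=(hj/(\beta e))^j$.
A derivative of order $i\le m$ produces terms bounded by $\mathrm{poly}(M)\,e^{-2\beta\sigma}(1-s)^{n-i}$. Because your block lies in $[3/4,1]$, you have $nh\le1/4$ and $\beta\ge1/2$, so the numerator is at most $\mathrm{poly}(M)(2e)^{-2(n-m)}$.

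Your denominator bound then gives $e^{-c_mM}$. In fact $\sum_k|c_k|^2d_k^2\gtrsim\eps^{d+1}\binom{2n}{n}$, which supplies an extra factor $4^{-n}$.

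So the exponential smallness comes from the geometric ratio $nh/(b_0-1/4)\le1/2$, not from $h\to0$, and your argument must check this ratio. Finally, only the lower bound on $d_k$ is needed, not two-sided control. Your tip-cap argument, which is also the paper's, supplies it.
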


\begin{proof}
\emph{Interior translates on a fixed domain.}
Choose the interior ball in \eqref{gub:eq:domain-sandwich} and any
representative direction $\omega$ from the prescribed direction set.
Set $\tau=\eps/h$ and $n=\lceil\tau^2\rceil$.
By Lemma~\ref{gub:lem:rounded-grid}, $\tau\asymp\sqrt{T_M}$ and
$nh=O(T_M/M)\to0$.
Relabel a block of $n+1$ consecutive signed midpoint offsets as
$\widehat b_j=\widehat b_0+jh$, $0\le j\le n$, and set
\[
 \beta_j:=\widehat b_j-\omega\cdot x_0.
\]
The block can be chosen so that $|\beta_j|\le\rho/4$ for every $j$.
Indeed, $|\omega\cdot x_0|+\rho\le R$, so a fixed neighborhood
of $\omega\cdot x_0$ lies strictly inside the signed offset range.
The grid spacing is $h$, and the selected block has length $nh\to0$.
These signed offsets index actual columns by
\eqref{gub:eq:signed-grid}.

Define
\[
 \begin{aligned}
 f_j(t)&=\sigma_G((t-\beta_j)/\eps),\qquad
 &v_j&=\frac{(-1)^j\binom nj}{\sqrt{\binom{2n}{n}}},\\
 F(t)&=\sum_{j=0}^n v_jf_j(t).
 \end{aligned}
\]
The binomial identity gives $\sum_j|v_j|^2=1$.
Write the columns in the selected block as
$g_j(x)=f_j(\omega\cdot(x-x_0))$.
Choose a coordinate $i$ with $|\omega_i|\ge d^{-1/2}$.
For large $M$, the cylinder
\[
 x=x_0+t\omega+y,\qquad y\perp\omega,\qquad
 |t-\beta_j|\le\eps,\qquad |y|\le\rho/2
\]
lies inside $B_\rho(x_0)$. Integrating
$|\partial_i^m g_j|^2$ on this cylinder gives, including when $m=0$,
\begin{equation}\label{gub:sh:gaussian-interior-column-norm}
 d_j^2:=\|g_j\|_{H^m(\Omega)}^2
 \ge c_{\Omega,d,m}\eps^{1-2m},
\end{equation}
because $\int_{-1}^1|\sigma_G^{(m)}(u)|^2\dd u>0$.

With the convention
$\widehat f(\xi)=\int_\R f(t)e^{-\ii t\xi}\dd t$,
the Gaussian transform and Plancherel's identity give, for $0\le k\le m$,
\[
 \|F^{(k)}\|_{L^2(\R)}^2
 =\frac{\eps^{1-2k}}2\frac{4^n}{\binom{2n}{n}}
 \int_\R u^{2k}e^{-u^2/2}
      \sin^{2n}\!\left(\frac{u}{2\tau}\right)\dd u.
\]
The central binomial coefficient satisfies
$\binom{2n}{n}\ge4^n/(2n+1)$.
On $|u|\le\tau$, the sine factor is at most $4^{-n}$.
On $|u|>\tau$, it is at most one. The Gaussian tail bound
\[
 \int_{|u|>\tau}|u|^{2k}e^{-u^2/2}\dd u
 \le C_k\tau^{2k}e^{-\tau^2/2},\qquad \tau\ge1,
\]
together with $n\ge\tau^2$ and $2n+1\le5\tau^2$, therefore yields
\[
 \|F^{(k)}\|_{L^2(\R)}^2
 \le C_m\eps^{1-2k}\tau^{2m+2}e^{-\tau^2/2}.
\]
Each slice of $\Omega\subset B_R(0)$ perpendicular to $\omega$
has measure at most $C_{d,R}$. Also,
$\partial^\alpha F(\omega\cdot(x-x_0))
 =\omega^\alpha F^{(|\alpha|)}(\omega\cdot(x-x_0))$ and
$\sum_{|\alpha|=k}|\omega^\alpha|^2\le1$.
Fubini's theorem and $\eps\le1$ thus give
\[
 \left\|\sum_{j=0}^n v_jg_j\right\|_{H^m(\Omega)}^2
 \le C\eps^{1-2m}\tau^{2m+2}e^{-\tau^2/2}.
\]
For the normalized columns, take $z_j=v_jd_j$ and extend $z$ by
zero to the full family. By \eqref{gub:sh:gaussian-interior-column-norm},
$\|z\|_2^2\ge c_{\Omega,d,m}\eps^{1-2m}$, while
$\sum_jz_jg_j/d_j=\sum_jv_jg_j$.
The corresponding normalized Rayleigh quotient gives
\[
 \begin{aligned}
 \lambda_{\min}(\widetilde H_{m,\sigma_G,M})
 &\le C\tau^{2m+2}e^{-\tau^2/2}\\
 &\le e^{-\tau^2/4}
 \end{aligned}
\]
for large $M$. Since the normalized diagonal entries are one,
$\lambda_{\max}\ge1$. Finally, $h\le2\kappa_0/M$ gives
$\tau^2\ge T_M/(4\kappa_0^2)$, proving
\eqref{gub:sh:gaussian-normalized-polynomial}.

\emph{Exterior columns.}
For the second estimate, set $\Omega=\Omega_{1/4}$, $a=1/4$, and
$\omega=e_1$. Relabel the positive midpoint offsets in
$[9/16,11/16]$ as $\widehat b_1,\ldots,\widehat b_Q$.
Lemma~\ref{gub:lem:rounded-grid} gives $Q\asymp M$.
For $b\in[9/16,11/16]$, write
\[
 g_b(x)=e^{-(x_1-b)^2/\eps^2},\qquad
 f_b(x)=e^{(b-a)^2/\eps^2}g_b(x).
\]
The normalized columns obtained from $f_b$ and $g_b$ are identical.
For $u=a-x_1>0$ on $\Omega_a$ and a complex parameter $\zeta$,
\[
 f_\zeta(x)=\exp\!\left(-\frac{u^2+2(\zeta-a)u}{\eps^2}\right).
\]
The function is entire in $\zeta$. On $|\zeta-5/8|\le1/4$, we have
$\operatorname{Re}\zeta-a\ge1/8$, so $|f_\zeta(x)|\le1$.
Differentiation in $x_1$ gives, for each fixed $0\le k\le m$,
\[
 |\partial_1^k f_\zeta(x)|\le C_m\eps^{-2m},
 \qquad x\in\Omega_a,\quad |\zeta-5/8|\le1/4.
\]
All derivatives involving the other coordinates vanish.
Cauchy's coefficient estimates on the circle of radius $1/4$
bound the Taylor remainder after degree $Q-2$ in the variable
$b-5/8$, uniformly for $b\in[9/16,11/16]$, by
$C_m\eps^{-2m}4^{-(Q-1)}$ for each of these spatial derivatives.

Choose $v\in\C^Q$ with $\|v\|_2=1$ satisfying
\[
 \sum_{j=1}^Qv_j(\widehat b_j-5/8)^\ell=0,
 \qquad 0\le\ell\le Q-2.
\]
There are $Q-1$ homogeneous equations in $Q$ unknowns, so such a
vector exists. The Taylor polynomial terms cancel, and $\sum_j|v_j|\le\sqrt Q$
gives
\[
 \left\|\sum_{j=1}^Qv_jf_{\widehat b_j}\right\|_{H^m(\Omega_a)}^2
 \le C_mQ\eps^{-4m}4^{-2(Q-1)}.
\]
For a lower bound on each column norm, write $x=(a-u,y)$ with
$y\in\R^{d-1}$ and use the cap
\[
 \eps^2\le u\le2\eps^2,
 \qquad |y|\le\eps.
\]
It lies in $\Omega_a$ for small $\eps$, has volume
$c_d\eps^{d+1}$, and satisfies $f_b(x)\ge c>0$ uniformly over
the selected offsets. Hence
\[
 \|f_{\widehat b_j}\|_{H^m(\Omega_a)}^2
 \ge\|f_{\widehat b_j}\|_{L^2(\Omega_a)}^2
 \ge c_d\eps^{d+1}.
\]
Use $w_j=v_j\|f_{\widehat b_j}\|_{H^m(\Omega_a)}$ as the coefficients
of the normalized columns and extend the vector by zero to the full family.
Then $\|w\|_2^2\ge c_d\eps^{d+1}$, so
\[
 \begin{aligned}
 \lambda_{\min}(\widetilde H_{m,\sigma_G,M})
 &\le C_mQ\eps^{-(4m+d+1)}4^{-2(Q-1)}\\
 &\le e^{-c_mM}.
 \end{aligned}
\]
Here $Q\asymp M$ and the logarithm of the prefactor is $O(T_M)$.
Using $\lambda_{\max}\ge1$ again proves
\eqref{gub:sh:gaussian-normalized-exterior}.
\end{proof}

\subsection{Tanh-difference lower bounds}

Fix
\begin{equation}
 0<a<b_*<b^*<1,\qquad \Omega=\Omega_a,\qquad R=1,
 \qquad \delta=b_*-a>0.
 \label{gub:sh:small-ball-geometry}
\end{equation}
The choice $(a,b_*,b^*)=(1/4,1/2,3/4)$ is admissible.
Figure~\ref{gub:sh:geometry} illustrates the gap $\delta$ between the
projection interval of the domain and the selected ridge centers. Both
this gap and the ellipsoid are independent of $M$.
For the tanh-difference family, set
\begin{equation}\label{gub:sh:actual-halfgrid}
 \begin{aligned}
 \eps&=\eps_{D,M}=\frac{T_M}{M},\qquad &B_M&=1+a_D\frac{T_M^2}{M},\\
 K_M^{\mathrm{eq}}&=\left\lfloor\frac{B_MM}{\kappa_0}\right\rfloor,
 \qquad &h_M&=\frac{B_M}{K_M^{\mathrm{eq}}},\\
 b_{\ell,M}&=(\ell+\tfrac12)h_M,\quad 0\le\ell<K_M^{\mathrm{eq}}.
 \end{aligned}
\end{equation}
The constants $a_D$ and $\kappa_0$ are the same fixed constants as
in the construction of Section~\ref{gub:sec:actual-dictionary}.
The spacing estimates below use the rounded value of $h_M$, with
$h_M\asymp M^{-1}$ as proved in Lemma~\ref{gub:lem:rounded-grid}.
\begin{figure}[tbp]
\centering
\includegraphics[width=\linewidth]{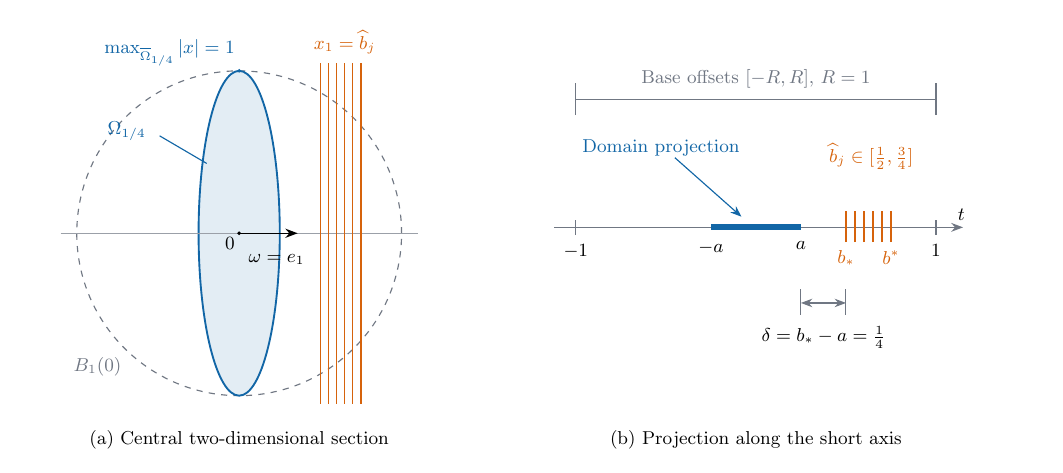}
\caption{Fixed ellipsoid for the tanh-difference lower bound, illustrated
for $a=1/4$, $b_*=1/2$, and $b^*=3/4$. Its outer radius is
$R=1$, while the closure of its projection onto the $e_1$-axis is $[-a,a]$.
The selected offsets lie in $[b_*,b^*]$.
The six lines shown are schematic. The number of selected offsets
in the rounded grid is proportional to $M$. The prescribed envelope $[-B_M,B_M]$
is slightly larger than the base envelope $[-R,R]$, where $R=1$.
The argument applies in every fixed dimension $d\ge2$.}
\label{gub:sh:geometry}
\end{figure}

\begin{theorem}[Raw and normalized tanh-difference lower bounds]
\label{gub:sh:tanh-lower-theorem}
Use the fixed geometry \eqref{gub:sh:small-ball-geometry}, the angular
degrees specified at the start of this section, and the prescribed feature family with
offsets \eqref{gub:sh:actual-halfgrid}. For each
fixed integer $m\ge0$, there are constants $c_m>0$ and $M_0$ such that
\begin{align}
 \kappa_2(H_{m,\sigma_D,M})
 &\ge \exp\!\left(c_m\frac{M^2}{T_M}\right),
 \label{gub:sh:tanh-raw-lower}\\
 \kappa_2(\widetilde H_{m,\sigma_D,M})
 &\ge \exp\!\left(c_m\frac{M^2}{T_M}\right),\qquad M\ge M_0.
 \label{gub:sh:tanh-normalized-lower}
\end{align}
The constants depend only on $d,m$, the fixed geometry, and the fixed
construction parameters.
\end{theorem}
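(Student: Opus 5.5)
The plan is to follow the exterior-column strategy already used for the normalized Gaussian estimate in Proposition~\ref{gub:sh:gaussian-normalized-lower}. This time, however, the cancellation is carried out in the exponential expansion of $\sigma_D$. That expansion converges geometrically at rate $e^{-2\delta/\eps}$ per term, instead of at a fixed rate $1/4$.

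Take $\omega=e_1\in X_M$, which is available for the prescribed angular degrees. Let $\widehat b_1<\cdots<\widehat b_Q$ be the positive midpoint offsets in $[b_*,b^*]$. By Lemma~\ref{gub:lem:rounded-grid} and $b^*<1<B_M$, we have $Q\asymp M$. On $\Omega_a$ one has $x_1<a$. Hence $s=(x_1-b)/\eps\le-\delta/\eps$ for every selected $b$.

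Using $\sigma_D(s)=2\sinh2/(\cosh 2s+\cosh2)$ and the series \eqref{gub:d2:detail:geometric-expansion}, as in \eqref{gub:d2:moment-identity}, we have for $e^{2s}<e^{-2}$
\[
 \sigma_D(s)=\sum_{n\ge1}4(-1)^{n-1}\sinh(2n)\,e^{2ns}.
\]
Thus each selected column has the form
\[
 g_{\widehat b_j}(x)=\sum_{n\ge1}\alpha_n y_j^n e^{2nx_1/\eps},\qquad y_j=e^{-2\widehat b_j/\eps},\qquad \alpha_n=4(-1)^{n-1}\sinh(2n).
\]
Choose $v\in\C^Q$ with $\|v\|_2=1$ and $\sum_j v_jy_j^n=0$ for $1\le n\le Q-1$. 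This is $Q-1$ homogeneous equations in $Q$ unknowns, so such a $v$ exists. All terms with $n<Q$ then cancel.

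To bound the tail, differentiate termwise $k\le m$ times in $x_1$; this is justified by uniform convergence on $\Omega_a$ for large $M$. Using $|\sum_jv_jy_j^n|e^{2nx_1/\eps}\le\sqrt Q\,e^{-2n\delta/\eps}$, we get
\[
 \Bigl|\partial_1^k\sum_jv_jg_{\widehat b_j}\Bigr|\le\sqrt Q\sum_{n\ge Q}4e^{2n}(2n/\eps)^k e^{-2n\delta/\eps}\le e^{-\delta Q/\eps}
\]
for large $M$. Derivatives in the other coordinates vanish. Squaring, integrating over $\Omega_a$, and summing over the multi-indices gives
\[
 \Bigl\|\sum_jv_jg_{\widehat b_j}\Bigr\|_{H^m(\Omega_a)}^2\le e^{-2\delta Q/\eps}=e^{-c M^2/T_M},
\]
since $Q/\eps\asymp M^2/T_M$.

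\emph{Raw bound.} Extend $v$ by zero. The Rayleigh quotient gives $\lambda_{\min}(H_{m,\sigma_D,M})\le e^{-cM^2/T_M}$. For the largest eigenvalue, use an interior column, say the one with offset $h_M/2$. As in \eqref{gub:sh:gaussian-large-diagonal}, it has squared norm at least $c_d\eps$, because $\sigma_D\ge c$ on $[-1,1]$. Interlacing \eqref{gub:sh:principal-comparison} then gives \eqref{gub:sh:tanh-raw-lower}, after absorbing $|\log\eps|=O(T_M)$.

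\emph{Normalized bound.} Set $w_j=v_jd_j$, where $d_j=\|g_{\widehat b_j}\|_{H^m(\Omega_a)}$. This needs a lower bound on each $d_j$ that is only exponentially small in $1/\eps$, not in $M^2/T_M$. On the cap $a-\eps\le x_1<a$, $|y|\le\eps^{1/2}$, contained in $\Omega_a$ for small $\eps$, we have
\[
 \sigma_D((x_1-b)/\eps)\ge c\,e^{-2(b-x_1)/\eps}\ge c\,e^{-2}e^{-2(b^*-a)/\eps}.
\]
Hence $d_j^2\ge c\,\eps^{C_d}e^{-4(b^*-a)/\eps}$, and therefore $\|w\|_2^2\ge e^{-C M/T_M}$. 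The normalized Rayleigh quotient gives
\[
 \lambda_{\min}(\widetilde H_{m,\sigma_D,M})\le e^{-cM^2/T_M+CM/T_M}\le e^{-c'M^2/T_M},
\]
and $\lambda_{\max}\ge1$ because the diagonal entries equal one. This proves \eqref{gub:sh:tanh-normalized-lower}.

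The main obstacle is making the tail estimate uniform. The argument needs the factor $e^{2n}(2n/\eps)^m$ to be dominated by $e^{-2n\delta/\eps}$ uniformly in $n\ge Q$ and $x\in\Omega_a$. It also needs the normalized-column lower bound to stay at order $e^{-C/\eps}$, so that it cannot swamp the gain $e^{-\delta Q/\eps}$. Both reduce to $Q\asymp M\to\infty$ and $\eps\to0$ with $\delta$ fixed.
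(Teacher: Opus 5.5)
Your proposal is correct and follows essentially the paper's route. It uses the same exterior offsets in $[b_*,b^*]$ along $\omega=e_1$, the same vanishing tail moments $\sum_j v_j e^{-2n\widehat b_j/\eps}=0$ for $1\le n<Q$ in the series \eqref{gub:sh:tanh-tail-series}, the near column at offset $h_M/2$ for $\lambda_{\max}$, and an $e^{-C/\eps}$ lower bound on the selected column norms for the normalized quotient.

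One small slip: your cap $\{a-\eps\le x_1<a,\ |y|\le\eps^{1/2}\}$ is not contained in $\Omega_a$ near the tip, since at $x_1=a-\eps^2$ the admissible $|y|$ is only $O(\eps)$. No cap is needed, however. On all of $\Omega_a$ you have $\sigma_D(u)\ge\sinh(2)e^{2u}$ for $u\le-1$ and $(x_1-\widehat b_j)/\eps\ge-(a+b^*)/\eps$, which give $d_j^2\ge|\Omega_a|\sinh^2(2)e^{-4(a+b^*)/\eps}$; this is exactly the bound the paper uses.
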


To prove the theorem, use the prescribed direction $\omega=e_1$ and
select all its positive midpoint offsets in $[b_*,b^*]$. Relabel
them as $\widehat b_1,\ldots,\widehat b_Q$ and denote their indices
in the full feature family by $i_1,\ldots,i_Q$.
Since $[b_*,b^*]\subset(0,1)$ and $B_M>1$, the midpoint count at
spacing $h_M$ satisfies
\begin{equation}
 Q=\frac{b^*-b_*}{h_M}+O(1)\asymp M.
 \label{gub:sh:tail-bias-count}
\end{equation}
\begin{lemma}[Tail cancellation]
\label{gub:sh:tail-moment-lemma}
For each sufficiently large $M$, there is a real vector
$v\in\mathbb R^Q$ with $\|v\|_2=1$ such that
\begin{equation}
 \sum_{j=1}^Q v_j e^{-2\ell\widehat b_j/\eps}=0,
 \qquad 1\le\ell<Q.
 \label{gub:sh:vanishing-tail-moments}
\end{equation}
Set
\[
 F_M(t)=\sum_{j=1}^Q v_j
        \sigma_D\!\left(\frac{t-\widehat b_j}{\eps}\right).
\]
For every fixed integer $m\ge0$ and all sufficiently large $M$,
\begin{equation}
 \|F_M^{(k)}\|_{L^\infty([-a,a])}
 \le 4\sqrt Q\left(\frac{2Q}{\eps}\right)^k
       e^{-Q\delta/\eps},\qquad 0\le k\le m.
 \label{gub:sh:tail-derivative-bound}
\end{equation}
\end{lemma}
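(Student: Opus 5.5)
The proof has two parts. First, linear algebra gives a nonzero real solution of the $Q-1$ homogeneous moment equations; we normalize it to $\|v\|_2=1$. Second, we expand each selected feature, for $t\in[-a,a]$, in powers of the small quantity $e^{2(t-\widehat b_j)/\eps}$, which is at most $e^{-2\delta/\eps}$. The vanishing moments then cancel every term of order below $Q$.

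\emph{Expansion of a single feature.} Write $s=(t-b)/\eps$ with $t\le a<b_*\le b$, so that $s\le-\delta/\eps<0$. Put $y=e^{2s}\in(0,1)$. From the rational form \eqref{gub:d2:rational-form}, or directly,
\[
 \tanh(s+1)-\tanh(s-1)=\frac{1-e^{-2}y}{1+e^{-2}y}\cdot(-1)\cdot(-1)\ldots
\]
It is cleaner to use the geometric series
\[
 \tanh(s\pm1)=-1+2\sum_{\ell\ge1}(-1)^{\ell-1}e^{\pm2\ell}y^\ell .
\]
This gives
\[
 \sigma_D(s)=4\sum_{\ell\ge1}(-1)^{\ell-1}\sinh(2\ell)\,e^{2\ell t/\eps}e^{-2\ell b/\eps}.
\]
The series converges absolutely because $e^{2}y\le e^{2-2\delta/\eps}<1$ for large $M$.

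\emph{Cancellation.} Substitute $b=\widehat b_j$ and sum against $v_j$. Every term with $1\le\ell<Q$ is multiplied by $\sum_jv_je^{-2\ell\widehat b_j/\eps}=0$, and the constant terms cancel exactly inside $\sigma_D$. Therefore
\[
 F_M(t)=4\sum_{\ell\ge Q}(-1)^{\ell-1}\sinh(2\ell)e^{2\ell t/\eps}\sum_jv_je^{-2\ell\widehat b_j/\eps}.
\]
The $k$th $t$-derivative multiplies the $\ell$th term by $(2\ell/\eps)^k$. We bound
\[
 \Bigl|\sum_jv_je^{-2\ell\widehat b_j/\eps}\Bigr|\le\sqrt Q\,e^{-2\ell b_*/\eps},
 \qquad 4\sinh(2\ell)\le2e^{2\ell},
 \qquad e^{2\ell t/\eps}\le e^{2\ell a/\eps}.
\]
Each term is then at most $2\sqrt Q(2\ell/\eps)^k e^{-2\ell(\delta/\eps-1)}$. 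Summing over $\ell\ge Q$ is the one step needing care, and it is a tail estimate. For large $M$ we have $\delta/\eps-1\ge\delta/(2\eps)$, so the terms are at most $2\sqrt Q(2\ell/\eps)^ke^{-\ell\delta/\eps}$. The ratio of consecutive terms is at most $(1+1/Q)^ke^{-\delta/\eps}\le1/2$ for $k\le m$ and large $M$. The sum is therefore at most twice its first term, namely $4\sqrt Q(2Q/\eps)^ke^{-Q\delta/\eps}$, which is exactly \eqref{gub:sh:tail-derivative-bound}.

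\emph{Justification.} Termwise differentiation is legitimate because the differentiated series converges uniformly on $[-a,a]$, by the same geometric bound. All thresholds depend only on $m$, $\delta$, and the fixed parameters, through $\eps=T_M/M\to0$.
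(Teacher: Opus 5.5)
Your proposal is correct and follows the paper's proof essentially step for step. Both arguments use rank--nullity for the $Q-1$ real moment equations, the expansion $\sigma_D(u)=4\sum_{\ell\ge1}(-1)^{\ell-1}\sinh(2\ell)e^{2\ell u}$ for $u<-1$, cancellation of all orders $\ell<Q$, and a ratio test on the remaining tail (you replace $2(\delta/\eps-1)$ by $\delta/\eps$ before summing rather than after, which changes nothing). The only blemish is the abandoned displayed formula ending in $\cdot(-1)\cdot(-1)\ldots$, which is meaningless and should simply be deleted.
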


\begin{proof}
The $Q-1$ homogeneous real equations
\eqref{gub:sh:vanishing-tail-moments} in $Q$ unknowns have a
nonzero solution by the rank-nullity theorem. Normalize this solution in the
Euclidean norm.

For a negative real number $u$,
\[
 \tanh u=-1+\frac{2e^{2u}}{1+e^{2u}}
 =-1+2\sum_{\ell\ge1}(-1)^{\ell-1}e^{2\ell u}.
\]
If $u<-1$, apply this identity to both $u+1$ and $u-1$.
Subtracting the two expansions cancels their constant terms. The identity
$e^{2\ell}-e^{-2\ell}=2\sinh(2\ell)$ gives
\begin{equation}
 \sigma_D(u)=4\sum_{\ell\ge1}(-1)^{\ell-1}
                    \sinh(2\ell)e^{2\ell u}.
 \label{gub:sh:tanh-tail-series}
\end{equation}
The bound $(t-\widehat b_j)/\eps\le-\delta/\eps<-1$ holds
uniformly for $|t|\le a$ and large $M$. Set
\[
 \gamma_M=2(\delta/\eps-1)>0.
\]
After substitution, the terms of the series and its first $m$
derivatives in $t$ are bounded in absolute value by constants times
$\eps^{-k}\ell^k e^{-\gamma_M\ell}$. They therefore converge
absolutely and uniformly on $[-a,a]$.
The majorant justifies termwise differentiation and interchange with
the finite sum over offsets. The moment equations cancel the first
$Q-1$ terms for all derivative orders simultaneously. Differentiating
and summing over the selected offsets gives
\[
 F_M^{(k)}(t)=4\sum_{\ell\ge Q}(-1)^{\ell-1}\sinh(2\ell)
 \left(\frac{2\ell}{\eps}\right)^k
 e^{2\ell t/\eps}
 \sum_{j=1}^Qv_j e^{-2\ell\widehat b_j/\eps}.
\]
For $|t|\le a$, the absolute value of the last two factors is at most
$\sqrt Q\,e^{-2\ell\delta/\eps}$, by the offset lower bound
and the Cauchy-Schwarz inequality.
Using $\sinh(2\ell)\le e^{2\ell}/2$, we obtain
\begin{equation}
 |F_M^{(k)}(t)|
 \le2\sqrt Q\left(\frac2\eps\right)^k
          \sum_{\ell\ge Q}\ell^k e^{-\gamma_M\ell}.
 \label{gub:sh:tail-derivative-series}
\end{equation}
For $0\le k\le m$, the ratio of consecutive summands is at most
$\exp(m/Q-\gamma_M)$, since
$(1+1/\ell)^k\le e^{k/\ell}\le e^{m/Q}$ for $\ell\ge Q$.
Since $Q\to\infty$ and $\gamma_M\to\infty$, this ratio is at most
$1/2$ beyond a common threshold for all $k\le m$. Thus the sum is at
most $2Q^k e^{-\gamma_M Q}$. Finally,
$\gamma_M\ge\delta/\eps$ once $\eps\le\delta/2$, proving
\eqref{gub:sh:tail-derivative-bound}.
\end{proof}

\begin{proof}[Proof of Theorem~\ref{gub:sh:tanh-lower-theorem}]
Extend $v$ by zero to the full feature family. Denote this vector by
$c_M\in\mathbb R^N$, with entries $(c_M)_{i_j}=v_j$ for
$1\le j\le Q$ and zeros elsewhere. It
satisfies $\|c_M\|_2=1$ and
\[
 \sum_i(c_M)_i g_i(x)=F_M(\omega\cdot x).
\]
For a multi-index $\alpha$,
$\partial^\alpha F_M(\omega\cdot x)
=\omega^\alpha F_M^{(|\alpha|)}(\omega\cdot x)$ and
$|\omega^\alpha|\le1$. Since $d,m$ are fixed, summing the
squared bounds of Lemma~\ref{gub:sh:tail-moment-lemma} over the
$\binom{d+m}{m}$ multi-indices yields, for \mbox{$2Q/\eps\ge1$},
\begin{equation}
 c_M^*H_{m,\sigma_D,M}c_M
 \le C_{d,m}|\Omega|Q
       \left(\frac{2Q}{\eps}\right)^{2m}
       \exp\!\left(-\frac{2Q\delta}{\eps}\right).
 \label{gub:sh:Hm-small-rayleigh}
\end{equation}
For the largest eigenvalue of the raw matrix, use the first offset
$b_{0,M}=h_M/2$ in the same direction. Let
$g_{\mathrm{near}}(x)=\sigma_D((\omega\cdot x-b_{0,M})/\eps)$.
In the coordinates $x=t\omega+y$, the cylinder
\[
 |t-b_{0,M}|\le\eps/2,\qquad y\perp\omega,
 \qquad |y|\le a/2
\]
lies in $B_a(0)\subset\Omega_a$ when
$b_{0,M}+\eps/2\le a/2$.
Its volume is $\eps |B_{a/2}^{d-1}|$.
Continuity and positivity of $\sigma_D$ give a positive minimum
on $[-1/2,1/2]$. Hence
\begin{equation}\label{gub:sh:Hm-large-eigenvalue}
\begin{aligned}
 \lambda_{\max}(H_{m,\sigma_D,M})
 &\ge \|g_{\mathrm{near}}\|_{H^m(\Omega)}^2\\
 &\ge \|g_{\mathrm{near}}\|_{L^2(\Omega)}^2\\
 &\ge c_{a,d}\eps.
 \end{aligned}
\end{equation}
Dividing this lower bound by the Rayleigh quotient
\eqref{gub:sh:Hm-small-rayleigh} gives
\[
 \kappa_2(H_{m,\sigma_D,M})
 \ge c_{a,d,m}\frac{\eps}{Q}
       \left(\frac{\eps}{2Q}\right)^{2m}
       e^{2Q\delta/\eps}.
\]
Since $Q\asymp M$ and $\eps=T_M/M$, taking logarithms yields
\[
 \begin{aligned}
 \log\kappa_2(H_{m,\sigma_D,M})
 &\ge\frac{2Q\delta}{\eps}-C_{a,d,m}T_M\\
 &\ge c_m\frac{M^2}{T_M}
 \end{aligned}
\]
for large $M$, proving \eqref{gub:sh:tanh-raw-lower}.

For the normalized matrix, use the column-norm matrix $D_m$ from
\eqref{gub:eq:gram-definition} in the same ordering as $g_i$. Write $d_{i,m}=\|g_i\|_{H^m(\Omega)}$
and set $z_M=D_m c_M$. This change of coefficients gives
\begin{equation}
 z_M^*\widetilde H_{m,\sigma_D,M}z_M=c_M^*H_{m,\sigma_D,M}c_M.
 \label{gub:sh:normalized-energy-identity}
\end{equation}
A lower bound for $\|z_M\|_2$ then gives an upper bound for the
normalized Rayleigh quotient. For $u\le-1$,
\begin{equation}
 \sigma_D(u)
 =\frac{4\sinh(2)e^{2u}}
        {(1+e^{2u+2})(1+e^{2u-2})}
 \ge\sinh(2)e^{2u}.
 \label{gub:sh:tanh-exact-tail-rational}
\end{equation}
The inequality follows because $0<e^{2u+2}\le1$ and
$0<e^{2u-2}\le1$, so the denominator is at most four.
For all selected columns and all $x\in\Omega$,
\[
 -\frac{a+b^*}{\eps}
 \le\frac{\omega\cdot x-\widehat b_j}{\eps}
 \le-\frac\delta\eps\le-1.
\]
It follows that
\begin{equation}\label{gub:sh:far-column-norm-lower}
\begin{aligned}
 d_{i_j,m}^2&\ge\|g_{i_j}\|_{L^2(\Omega)}^2\\
 &\ge|\Omega|\sinh^2(2)
       \exp\!\left(-\frac{4(a+b^*)}{\eps}\right)
 \end{aligned}
\end{equation}
for each selected column, $1\le j\le Q$. Hence
\[
 \|z_M\|_2^2=\sum_{j=1}^Qd_{i_j,m}^2|v_j|^2
 \ge\min_{1\le j\le Q}d_{i_j,m}^2,
\]
because $\|v\|_2=1$. Combining
\eqref{gub:sh:Hm-small-rayleigh},
\eqref{gub:sh:normalized-energy-identity}, and
\eqref{gub:sh:far-column-norm-lower}
in the normalized Rayleigh quotient gives
\begin{equation}
 \lambda_{\min}(\widetilde H_{m,\sigma_D,M})
 \le C_{d,m}Q\left(\frac{2Q}{\eps}\right)^{2m}
 \exp\!\left(-\frac{2Q\delta}{\eps}
                    +\frac{4(a+b^*)}{\eps}\right).
 \label{gub:sh:Hm-normalized-small-eigenvalue}
\end{equation}
The positive term in the exponent is $O(M/T_M)$, whereas
$Q\delta/\eps\asymp M^2/T_M$. To absorb the positive term, choose
$M_0$ so that
$Q\delta\ge4(a+b^*)$. Then
\[
 -\frac{2Q\delta}{\eps}
       +\frac{4(a+b^*)}{\eps}
 \le-\frac{Q\delta}{\eps}.
\]
The logarithm of the remaining prefactor in
\eqref{gub:sh:Hm-normalized-small-eigenvalue} is at most $C_{d,m}T_M$,
whereas $Q\delta/\eps\ge cM^2/T_M$. Increasing $M_0$ again
therefore gives
$\lambda_{\min}(\widetilde H_{m,\sigma_D,M})\le e^{-c_mM^2/T_M}$.
The normalized diagonal entries are one, so a coordinate Rayleigh
quotient gives $\lambda_{\max}(\widetilde H_{m,\sigma_D,M})\ge1$.
The ratio of these eigenvalue bounds proves the normalized estimate.
\end{proof}

The vanishing moments yield the factor $\exp(-cQ/\eps)$ in the
normalized Rayleigh quotient. Division by $\|z_M\|_2^2$ contributes
at most $\exp(C/\eps)$. Since $Q\asymp M$, the cancellation term
determines the exponential order $M^2/T_M$.

\subsection{Optimality over bounded Lipschitz domains}

\begin{corollary}[Matching exponential orders]
\label{gub:sh:matching-corollary}
For the Gaussian feature family on the fixed ellipsoid of
Proposition~\ref{gub:sh:gaussian-raw-lower},
\begin{equation}
 e^{c_mM^2/T_M}\le\kappa_2(H_{m,\sigma_G,M})\le e^{C_mM^2/T_M}.
 \label{gub:sh:gaussian-matching}
\end{equation}
For the tanh-difference feature family on the fixed ellipsoid in
\eqref{gub:sh:small-ball-geometry},
\begin{align}
 e^{c_mM^2/T_M}&\le\kappa_2(H_{m,\sigma_D,M})\le e^{C_mM^2/T_M},
 \label{gub:sh:tanh-raw-matching}\\
 e^{c_mM^2/T_M}&\le\kappa_2(\widetilde H_{m,\sigma_D,M})\le e^{C_mM^2/T_M}.
 \label{gub:sh:tanh-normalized-matching}
\end{align}
These estimates hold for every fixed $d\ge2$ and integer
$m\ge0$, with fixed positive constants and all sufficiently large
$M$, using the angular degrees specified at the start of this section.
In each statement, $M^2/T_M$ can equivalently be replaced by
$N^{2/d}/\log(N+2)$, where $N$ is the actual column count of
the corresponding feature family.
\end{corollary}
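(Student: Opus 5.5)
The corollary only assembles the lower bounds of this section with the upper bound of Theorem~\ref{gub:thm:main}. The first step is to check that both fixed ellipsoids fall under the hypotheses of Theorem~\ref{gub:thm:main}. Each $\Omega_a$ with $0<a<1$ is a nonempty bounded convex, hence Lipschitz, domain. Its radius is $R=1$, and it contains the interior ball $B_a(0)$, so \eqref{gub:eq:domain-sandwich} holds with $x_0=0$ and any $\rho<a$. The angular sequence $q_M=4\lceil c_\theta M\rceil$ is integer-valued and satisfies $q_M\asymp M$ with fixed comparison constants. The offsets are the rounded midpoint grid \eqref{gub:eq:half-grid}, built with the same fixed $\kappa_0,a_G,a_D$. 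Theorem~\ref{gub:thm:main} therefore applies on $\Omega_{1/4}$ and on $\Omega_a$ from \eqref{gub:sh:small-ball-geometry}. It gives the upper estimates $\kappa_2\le e^{C_mM^2/T_M}$ for the raw and normalized matrices of both activations, for $M\ge M_0$.

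The lower estimates come directly from the earlier results. For \eqref{gub:sh:gaussian-matching}, we invoke Proposition~\ref{gub:sh:gaussian-raw-lower} on $\Omega_{1/4}$. For \eqref{gub:sh:tanh-raw-matching} and \eqref{gub:sh:tanh-normalized-matching}, we invoke the two parts of Theorem~\ref{gub:sh:tanh-lower-theorem}. Both results use the direction $e_1\in X_M$, which is guaranteed by the odd latitude count, and the same midpoint family. We then take $M_0$ to be the maximum of the thresholds, $c_m$ to be the minimum of the lower constants, and $C_m$ to be the maximum of the upper constants. This gives all three two-sided bounds simultaneously.

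For the reformulation in terms of the column count, Lemma~\ref{gub:lem:rounded-grid} and \eqref{gub:eq:full-direction-count} give $N=N_M^{\mathrm{eq}}\asymp M^d$ for each activation. Hence $M\asymp N^{1/d}$, and $\log(M+2)\asymp\log(N+2)$, because $\log N=d\log M+O(1)$ and both sides tend to infinity. It follows that $M^2/T_M\asymp N^{2/d}/\log(N+2)$, with constants depending only on $d$ and the construction. Replacing $M^2/T_M$ by $N^{2/d}/\log(N+2)$ therefore only changes $c_m$ and $C_m$ by fixed factors.

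The only step requiring care is the first: confirming that the specific angular sequence and the radius normalization $R=1$ used in the lower-bound constructions are admissible in Theorem~\ref{gub:thm:main}. Once that is checked, the rest is bookkeeping of constants.
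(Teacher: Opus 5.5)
Your proposal is correct and matches the paper's proof. Like the paper, you combine Proposition~\ref{gub:sh:gaussian-raw-lower} and Theorem~\ref{gub:sh:tanh-lower-theorem} with Theorem~\ref{gub:thm:main} on the same domain and feature family, and then use $N\asymp M^d$, computed separately for each activation, to pass to $N^{2/d}/\log(N+2)$. Your explicit checks that $\Omega_a$ is Lipschitz with $R=1$, contains an interior ball, and has $q_M=4\lceil c_\theta M\rceil\asymp M$ only spell out what the paper leaves implicit.
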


\begin{proof}
Combine Proposition~\ref{gub:sh:gaussian-raw-lower} and
Theorem~\ref{gub:sh:tanh-lower-theorem} with
Theorem~\ref{gub:thm:main} on the same domain and the same
feature family. The column count $N\asymp M^d$ implies
\[
 \frac{M^2}{\log(M+2)}
 \asymp\frac{N^{2/d}}{\log(N+2)}.
\]
The counts for the two activations are computed separately, since
their margins and hence their integer offset counts may differ.
\end{proof}

The following table summarizes the matching results for continuous
Gram matrices defined by the full $H^m$ inner product, with $m$ fixed.
\begin{center}
\begin{tabular}{lll}
\hline
Activation & Column scaling & Matching lower-bound geometry\\
\hline
Gaussian & Raw & $\Omega=\Omega_{1/4}$, $R=1$\\
Gaussian & Normalized & Not established (see Proposition~\ref{gub:sh:gaussian-normalized-lower})\\
Tanh difference & Raw & $\Omega=\Omega_a$, $0<a<b_*<b^*<1$\\
Tanh difference & Normalized & The same fixed ellipsoid and offsets\\
\hline
\end{tabular}
\end{center}

For each of the three matrices in the corollary, a fixed ellipsoid
and an admissible angular degree sequence rule out replacing the
common upper exponent by $o(M^2/T_M)$ over the class of fixed bounded
Lipschitz domains, even when the constants depend on the domain and
construction parameters. This optimality concerns the class of domains
and does not prescribe the growth rate on each domain. For the centered ball
$\Omega=B_R(0)$, with $R=\sup_{x\in\Omega}|x|$, every direction
has projection $(-R,R)$, so positive offsets beyond $R$ lie only in the shrinking
margin $(R,R+\Delta_{\sigma,M})$. The fixed exterior intervals used in
the ellipsoidal proofs are therefore unavailable. The present bounds
do not determine the growth rates on the ball. The polynomial bound
of Proposition~\ref{gub:sh:gaussian-normalized-lower} still applies
to normalized Gaussian columns there.

On the short-axis ellipsoid, Gaussian decay makes columns centered
a fixed distance outside the projection interval exponentially smaller
in norm than an interior column. This norm disparity gives the raw
matching lower bound and disappears under column normalization.
Linear combinations of normalized columns give the $\exp(c_mM)$ lower
bound of Proposition~\ref{gub:sh:gaussian-normalized-lower}. A matching
lower bound remains open. The weaker lower bound does not establish
slower growth after normalization.

\section{Numerical observations on a square}
\label{sec:numerics}

We compare the growth of the Gaussian and tanh-difference Gram condition
numbers on $\Omega=(-1,1)^2$ at finite resolutions. Let $G_M$ and
$\widetilde G_M$ denote the raw and column-normalized $L^2(\Omega)$ Gram
matrices, respectively.

The network design uses $s=2$ and the scales and margins in
\eqref{gub:eq:scales} and \eqref{gub:eq:margins}, with $R=\sqrt2$,
$a_G=1$, and $a_D=\pi/2+7/20$. For
$M\in\{3,5,7,9,13\}$, we take $J_M=2M$ equally spaced directions,
including antipodal pairs, and $K_M=M$ positive midpoint offsets in
$(0,B_{\sigma,M})$. Each Gram matrix therefore has $N_M=2M^2$ columns.
The prescribed offset count replaces the rounded count in
\eqref{gub:eq:half-grid} for this numerical comparison.

Figures~\ref{num:fig:gaussian} and~\ref{num:fig:tanh} plot
$\log_{10}\kappa_2(G_M)$ and $\log_{10}\kappa_2(\widetilde G_M)$ against
$M^2/\log(M+2)$. All four condition-number sequences increase across
the sampled resolutions. Column normalization substantially reduces the
Gaussian condition numbers and gives a flatter curve over this range.
For the tanh difference, rapid growth remains after normalization.
The reference lines provide a visual comparison over the plotted range
and do not establish an asymptotic growth law.

\begin{figure}[htbp]
\centering
\includegraphics[width=0.82\linewidth]{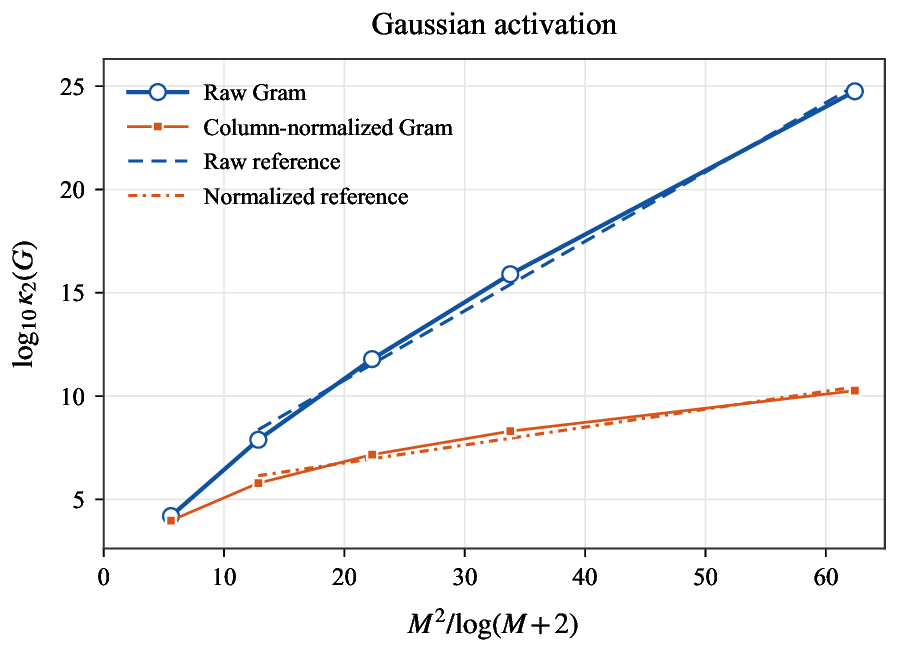}
\caption{Raw and column-normalized Gaussian $L^2$ Gram condition numbers
 on the square, with $2M$ directions and $M$ positive midpoint offsets.}
\label{num:fig:gaussian}
\end{figure}

\begin{figure}[htbp]
\centering
\includegraphics[width=0.82\linewidth]{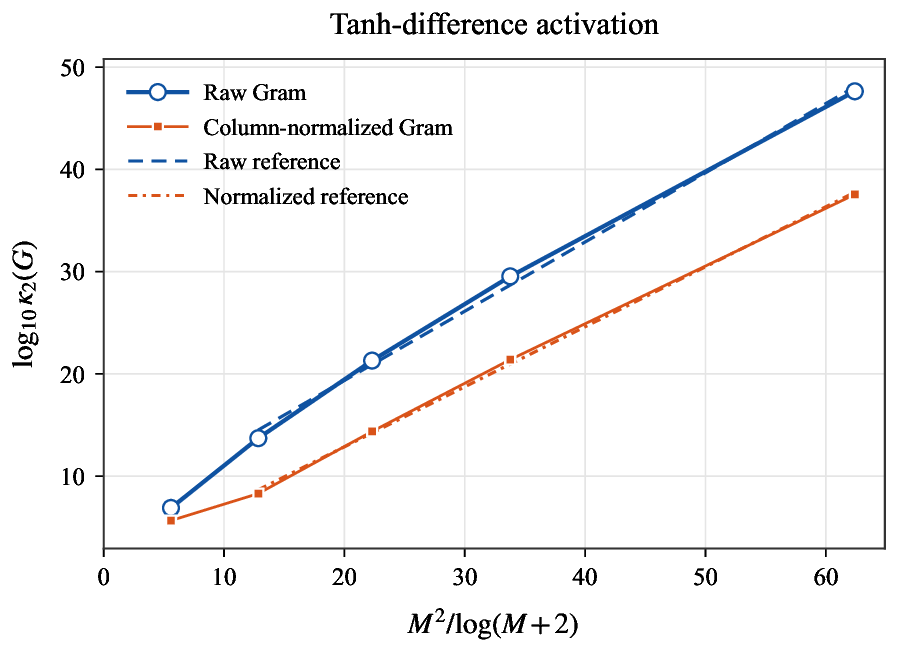}
\caption{Raw and column-normalized tanh-difference $L^2$ Gram condition
 numbers on the same square, with the same node counts as in
 Figure~\ref{num:fig:gaussian}.}
\label{num:fig:tanh}
\end{figure}

\section{Conclusion}

For the prescribed Gaussian and tanh-difference ridge families, we have
proved positive definiteness and the condition-number bound
$\kappa_2\le\exp(C_mM^2/\log(M+2))$ for raw and column-normalized
continuous Gram matrices in the full $H^m$ inner product. The result
applies to midpoint and Gauss-Legendre offsets on every fixed
nonempty bounded Lipschitz domain in dimension $d\ge2$, for fixed
integer $m\ge0$ and sufficiently large real $M$. The actual feature count satisfies
$N\asymp M^d$, so the exponent has order $N^{2/d}/\log(N+2)$.

The proof uses homogeneous direction selectors of degree $O_d(M)$,
one-dimensional coefficient recovery, and observation on an interior
ball. Together, these estimates control arbitrary coefficient vectors
for the prescribed nodes
and remain valid for the rounded midpoint grid. Under the additional
geometric and parameter hypotheses of Part I, they quantify the
coefficient sensitivity of feature families that attain the optimal
Sobolev approximation order.

For midpoint offsets, fixed ellipsoids and an admissible angular
degree sequence yield matching lower bounds for raw Gaussian matrices
and for raw and normalized tanh-difference matrices. These examples
establish optimality over the domain class in the three cases. The
Gaussian proof uses disparities in column norms, whereas the
tanh-difference proof uses cancellation of successive tail terms.
For normalized Gaussian midpoint matrices, we obtain a polynomial lower bound on
every fixed domain and an $\exp(c_mM)$ lower bound on a fixed ellipsoid.
The square-domain $L^2$ experiments illustrate condition-number growth
and the effect of normalization at finite resolutions.

The gap between the lower and upper bounds for normalized Gaussian
matrices remains unresolved. Determining the sharp growth rates on balls
centered at the origin is a related question. With
$R=\sup_{x\in\Omega}|x|$, these balls have projection range $(-R,R)$ in
every direction, which excludes the fixed exterior offset intervals used
in the ellipsoidal proofs. Matching lower bounds for Gauss-Legendre
offsets also remain to be established.

\bibliographystyle{elsarticle-num}
\bibliography{references}
\end{document}